\newtheorem{theorem}{Theorem}[section]
\newtheorem{lemma}[theorem]{Lemma}
\newtheorem{proposition}[theorem]{Proposition}
\newtheorem{definition}[theorem]{Definition}
\theoremstyle{definition}
\newtheorem{formula}[theorem]{Formula}
\newtheorem{remark}[theorem]{Remark}
\newtheorem*{ack}{Acknowledgements}
\newtheorem*{fact}{Fact}
\theoremstyle{remark}
\newtheorem{example}[theorem]{Example}
\newtheorem{examples}[theorem]{Examples}
\newcommand{\PP}{\mathbb{P}}
\newcommand{\Mac}{{\texttt {Macaulay2}}}
\newcommand{\Gr}{\mathbf{Gr}}
\newcommand{\OGr}{\mathbf{OGr}}
\newcommand{\IGr}{\mathbf{IGr}}
\newcommand{\HOM}{\mathcal{H} om}
\def\cT{{\mathcal T}}
\def\cU{{\mathcal U}}
\def\zero{\mathscr{Z}}
\def\Grb{\mathbf{Gr}}
\def\PP{\mathbf P}
\def\CC{\mathbb{C}}
\def\QQ{\mathbb{Q}}
\def\ZZ{\mathbb{Z}}
\def\cE{{\mathcal E}}
\def\cH{{\mathcal H}}
\def\cV{{\mathcal V}}
\def\cW{{\mathcal W}}
\def\cO{{\mathcal O}}
\def\cF{{\mathcal F}}
\def\cQ{{\mathcal Q}}
\def\cL{{\mathcal L}}
\def\ra{\rightarrow}
\def\lra{\longrightarrow}
\def\fg{\mathfrak{g}}
\def\fh{\mathfrak{h}}
\def\fp{\mathfrak{p}}
\def\fsl{\mathfrak{sl}}
\def\fe{\mathfrak{e}}
\def\af1{\mathbf{aff}_1}
\DeclareMathOperator{\im}{Im}
\DeclareMathOperator{\rank}{rank}
\DeclareMathOperator{\td}{td}
\DeclareMathOperator{\Aut}{Aut}
\DeclareMathOperator{\End}{End}
\DeclareMathOperator{\Hom}{Hom}
\DeclareMathOperator{\hhh}{h}
\DeclareMathOperator{\ctop}{c_{top}}
\DeclareMathOperator{\Sym}{Sym}
\DeclareMathOperator{\Sing}{Sing}
\DeclareMathOperator{\codim}{codim}
\DeclareMathOperator{\rad}{rad}
\DeclareMathOperator{\ad}{ad}
\DeclareMathOperator{\ind}{index}
\DeclareMathOperator{\HHH}{H}
\DeclareMathOperator{\RRR}{R}
\newcommand{\ladi}{\begin{lastadd}}
\newcommand{\ladf}{\end{lastadd}}
\newcommand{\lrei}{\begin{lastrem}}
\newcommand{\lref}{\end{lastrem}}
\newenvironment{lastadd}
{\cbstart\color{red}}
{\todo{red to remove}\cbend}
\newenvironment{lastrem}
{\cbstart\color{yellow}}
{\cbend}
\newcommand\blfootnote[1]{%
	\begingroup
	\renewcommand\thefootnote{}\footnote{#1}%
	\addtocounter{footnote}{-1}%
	\endgroup
}
\author{Vladimiro Benedetti \and Sara Angela Filippini \and Laurent Manivel \and Fabio Tanturri}
\title{Orbital degeneracy loci and applications}
\begin{document}
\date{}


\AtEndDocument{\noindent Vladimiro Benedetti\\
	D\'epartement de Math\'ematiques et Applications\\ENS, PSL Research University\\75230 Paris CEDEX 05, France.\\
	Vladimiro.Benedetti@ens.fr\\
	\rule{1pt}{0pt}\\
	Sara Angela Filippini\\
	Department of Mathematics, Imperial College London\\
	South Kensington Campus\\
	London SW7 2AZ, UK.\\
	s.filippini@imperial.ac.uk\\
	\rule{1pt}{0pt}\\
	Laurent Manivel\\
	Institut de Math\'ematiques de Toulouse, UMR 5219\\
	Universit\'e de Toulouse, CNRS, UPS IMT\\
	F-31062 Toulouse Cedex 9, France.\\
	manivel@math.cnrs.fr\\
	\rule{1pt}{0pt}\\
	Fabio Tanturri\\
	Laboratoire Paul Painlev\'e, UMR CNRS 8524\\
	Universit\'e de Lille\\
	59655 Villeneuve d'Ascq CEDEX, France.\\
	Fabio.Tanturri@math.univ-lille1.fr}
\maketitle

\begin{abstract}
Degeneracy loci of morphisms between vector bundles have been used in a wide variety of situations. 
We introduce a vast generalization of this notion, based on orbit closures of algebraic groups in their
linear representations. A preferred class of our orbital degeneracy loci is characterized by a certain 
crepancy condition on the orbit closure, that allows to get some control on the canonical sheaf. This 
condition is fulfilled for Richardson nilpotent orbits, and also for partially decomposable skew-symmetric 
three-forms in six variables. In order to illustrate the efficiency and flexibility of our methods, 
we construct in both situations many Calabi--Yau manifolds of dimension three and four, as well as a few
Fano varieties, including some new Fano fourfolds.
\end{abstract}
\setcounter{tocdepth}{1}
\tableofcontents

\blfootnote{2010 MSC Primary: 14N05; Secondary: 14E15, 14J32, 14J45, 14M12.}

\section{Introduction}

Algebraic geometry is full of amazing abstract statements about varieties and schemes. Sometimes one can feel a bit frustrated 
about the relatively small number of interesting varieties or schemes that we are able to effectively construct. As  Simpson 
formulates it in a slightly different context \cite{simpson04}, {\it we have the 
	impression that there is a huge mass of stuff out there, waiting to be constructed or seen, but we have no idea how to get there}.

Calabi--Yau threefolds are probably a good example: even though 
huge databases have been constructed, which essentially compile
complete intersections in toric varieties, our feeling is that 
there is still {\it a huge mass of stuff} to be discovered, 
consisting  of  Calabi--Yau threefolds of very different types. 
The situation is even more frustrating as far as compact hyperk\"ahler
manifolds are concerned: a few beautiful constructions have been
known for some time, but even if we can imagine that {\it there is
	some stuff out there, waiting to be constructed or seen,  
	we have no idea how to get there}. In fact no new  hyperk\"ahler
manifold has been constructed in this century. 

The purpose of this paper is to introduce some basic techniques 
that should enrich our toolbox, and show how to effectively construct interesting varieties using these techniques. The methods we introduce are rather flexible. The thread we decided to follow in order to illustrate their efficiency was to construct 
varieties with trivial canonical bundle in low dimension, essentially threefolds and fourfolds.  
Our hope was of course to 
discover some new  hyperk\"ahler fourfolds, or at least some 
new explicit constructions of polarized hyperk\"ahler fourfolds.
For the time being this has not happened, but we sincerely hope that other, more astute mathematicians will be able to use our techniques and fulfill this goal. 

Our initial motivation was to generalize the very classical
notion of degeneracy loci of morphisms between vector bundles.
The starting point of our project was the observation that 
the universal models of degeneracy loci are just the spaces 
of matrices of a given format, of rank bounded by a given integer. Those spaces are exactly the orbit closures of the linear groups acting as usual on the space of matrices. From this point of view, they are just a basic series of examples inside the world of 
representations of algebraic groups with only finitely many 
orbits. Irreducible representations of complex reductive groups
with this property were classified by V.\ Kac in a very influential paper \cite[Theorem 2]{Kac80}. There are many interesting cases, some of them very classical, other ones related to exceptional groups and still rather mysterious; but we have accumulated  a huge amount of information about those orbits, 
which are in themselves extremely interesting varieties. 

Beyond orbit closures, we can more generally consider an invariant closed subvariety  inside some linear representation of an algebraic group. This is the starting point for defining our orbital degeneracy loci, which are nothing else than relative versions of these invariant subvarieties, just as degeneracy loci of morphisms between vector bundles are relative versions of varieties of matrices with bounded rank. In fact the construction has nothing to do with the finiteness of orbit closures, and has a huge flexibility. But the most favorable situation happens when the subvariety is defined by a \emph{Kempf collapsing} satisfying a particular crepancy condition: in such a case, the relative version of the collapsing allows us to control the canonical sheaf of our degeneracy loci. We will focus on two situations for which this crepancy condition is fulfilled.

The first one is provided by skew-symmetric three-forms in six variables that are partially decomposable. The second one corresponds to nilpotent orbit closures, more precisely the so-called \emph{Richardson} ones, for which we have resolutions (or alterations) of singularities given by a Kempf collapsing similar to the famous Springer resolution. For both of these situations, we will use the relative version of the collapsing to construct examples of special varieties; typically, we will need to find, for our base variety, Fano varieties of a given dimension and a given index endowed with a suitable vector bundle or, more generally, a suitable principal bundle.

One of the limitations of our methods is that we 
have little understanding (and  only few  constructions) of vector bundles on Fano manifolds of higher dimension, but this 
understanding is likely to improve in the future. At present, we take advantage of the well-known fact that most 
of the Fano varieties of large index we have at our disposal are constructed from Grassmannians or other rational
homogeneous spaces, which have the nice property of being endowed with homogeneous vector bundles. Using 
those, we are able to construct  several  families of Calabi--Yau threefolds and  many families of Calabi--Yau fourfolds, as well as several examples of Fano varieties.
We hope this will convince our readers that our methods are really efficient, and that they have the 
potential for being applied in different contexts as well.

\medskip
The structure of the paper is the following. In Section \ref{geomtecnfor}, we define an orbital degeneracy locus, explain how to use a Kempf collapsing to control its canonical sheaf, and give a first series of relevant examples. In Section \ref{partDecForms}, we concentrate on three-forms in six variables; we explain  how they allow to 
construct threefolds and fourfolds with trivial canonical bundle starting from a suitable rank six vector bundle
on a Fano manifold of dimension eight or nine and index five; we give lists of explicit varieties and vector bundles satisfying all the required conditions. Section \ref{nilpotOrbits} focuses on nilpotent orbit closures; we explain  how each 
Richardson orbit can be used to construct threefolds and fourfolds with trivial canonical bundle, starting from 
a Fano manifold of suitable dimension and index, and we provide lists of explicit examples. In Section 
\ref{fanoDegLoci} we adapt our techniques in order to produce Fano or almost Fano manifolds, which is also an interesting problem; we describe  the (almost) Fano threefolds we are able to construct, and we identify them explicitly using the existing classifications. 

In Appendix \ref{appendix} we explain how we computed some of the invariants of our degeneracy loci. Finally, in Appendix \ref{appendixB} we give a Thom--Porteous type formula for the class of a degeneracy locus defined by partially decomposable three-forms.

\begin{ack}
	The authors wish to thank S.\ Druel for pointing out the proof of Lemma \ref{lemma:ratsing},  as well as 
	B.\ Fu and A.\ Garbagnati for useful references.
	The second author would like to thank Ch.\ Okonek for stimulating discussions and valuable advice during her stay in Zurich.\\
	This work has been carried out in the framework of the Labex Archim\`ede (ANR-11-LABX-0033) and 
	of the A*MIDEX project (ANR-11-IDEX-0001-02), funded by the ``Investissements d'Avenir" 
	French Government programme managed by the French National Research Agency. The second author was also partially supported by the Engineering and Physical Sciences Research Council programme grant ``Classification, Computation, and Construction: New Methods in Geometry'' (EP/N03189X/1). 
\end{ack}

\section{Geometric techniques for orbital degeneracy loci}
\label{geomtecnfor}

In this section we define, for an invariant subvariety $Y$ of a representation $V$ and a section $s$ of a vector bundle on a smooth variety $X$ having fiber $V$, the orbital degeneracy locus $D_Y(s)$. We show how a Kempf collapsing resolving the singularities of $Y$ can be used to construct a resolution of singularities of $D_Y(s)$. If the collapsing satisfies an additional crepancy condition, the canonical sheaf of such a resolution can be controlled in terms of the base variety $X$ and the vector bundle. Several examples are discussed.

\subsection{Orbital degeneracy loci} 

\label{gendegloc}
Let $G$ be an algebraic group acting on a variety $Z$. For any $G$-principal bundle 
$\cE$ over a manifold $X$, there is an associated bundle $\cE_Z$ over $X$ with fiber 
$Z$, defined as the quotient of $\cE\times Z$ by the equivalence relation 
$(eg,z)\simeq (e,gz)$ for any $g\in G$. In particular, if $V$ is a $G$-module, then
$\cE_V$ is a vector bundle over $X$, with fiber $V$. 

\begin{definition}
	Suppose that $V$ is a $G$-module and $Y$ a $G$-stable subvariety of $V$. Let $s$ be a global section of the vector bundle $\cE_V$. Then the $Y$-degeneracy locus of $s$, denoted by $D_Y(s)$, is 
	the scheme defined by the Cartesian diagram
	\[
	\xymatrix{\ar@{}[dr] |{\square}
		\cE_Y \ar[r] & \cE_V  \\
		D_Y(s) \ar[u] \ar@{^{(}->}[r] & X \ar[u]_-s }
	\]
	Its support is
	\[
	\{x\in X, \; s(x)\in \cE_Y \subset \cE_V\} = s^{-1}(\cE_Y).
	\]
\end{definition}
Under some mild assumptions, e.g.\ the generality of the choice of $s$, $D_Y(s)$ will have reduced structure and will be identified with its support.

If $E$ is a vector bundle of rank $e$ on $X$, the bundle of frames of $E$ is a 
$GL_e$-principal bundle $\cE$ on $X$, and $E=\cE_V$ for $V$ the natural representation
of $G=GL_e$. The only proper $G$-stable subvariety $Y$ of $V$ is the origin, and 
if $s$ is a global section of $E$, then $D_Y(s)$ is just the usual zero locus of $s$, which will be denoted by $\zero(s)$. 

If $F$ is another vector bundle of rank $f$ on $X$, the fiber product of the bundles of frames 
of $E$ and $F$ is a $GL_e\times GL_f$-principal bundle $\cH$ on $X$, and $\HOM(E,F)=\cH_V$ for $V$ 
the usual representation of $G=GL_e\times GL_f$ on the space  $V=M_{f,e}$ of matrices of size 
$f\times e$. The only closed $G$-stable subvarieties of $V$ are the varieties of matrices $Y_r$
of rank at most $r$, for $r\le \min(e,f)$.  
If $\varphi$ is a global section of $\Hom(E,F)$, then $D_{Y_r}(\varphi)$ is the usual $r$-th 
degeneracy locus of $\varphi$.

\subsection{Collapsing of vector bundles} 

A situation we will be interested in is when $Y\subset V$ is closed but singular, and can be 
desingularized by the total space of a homogeneous vector bundle; this is typically the case
of the varieties of matrices of bounded rank. 

Formally, suppose that $P$ is a parabolic subgroup 
of $G$, and that $W$ is a $P$-submodule of the $G$-module $V$. Then $G$ can be considered
as a $P$-principal bundle over the projective variety $G/P$, and we denote by $\cW$ and $\cV$ the 
vector bundles on $G/P$ associated to the $P$-modules $W$ and $V$. Obviously $\cW$ is a subbundle
of $\cV$. Moreover, since $V$ is a $G$-module, $\cV\simeq G/P\times V$ through the isomorphism 
induced by the map $(g,v)\mapsto (g,gv)$; in particular $\cV$ is (canonically) a trivial vector 
bundle on $G/P$ with fiber $V$. 
The second projection $\cV \rightarrow V$ restricts to a proper morphism $p_W$ mapping $\cW$ to its image $Y \subset V$; by construction $Y$ is a closed $G$-stable subvariety of $V$. 
This situation, illustrated in the commutative diagram \eqref{kempfCollapsing}, was originally described by Kempf \cite{Kempf76} and is sometimes referred to as a Kempf collapsing (of the vector bundle $\cW$).
\begin{equation}
	\label{kempfCollapsing}
	\xymatrix @C=2pc @R=0.4pc{
		\cV \ar[dd] & \rule{1pt}{0pt}\cW \ar@{_{(}->}[l] \ar[dr] \ar[dd]^-{p_W} \\
		& & G/P\\
		V & \rule{1pt}{0pt}Y \ar@{_{(}->}[l] 
	}
\end{equation}

\begin{theorem}[\cite{Kempf76}]
	\label{KempfInv}
	If $G$ is connected and $\cW$ is completely reducible, then $Y$ is normal and Cohen--Macaulay. If moreover $p_W$ is birational, it is a desingularization of $Y$ and $Y$ has rational singularities, i.e.\ ${p_W}_* \cO_{\cW}=\cO_Y$ and $\RRR^i {p_W}_* \cO_{\cW} = 0$ for any $i>0$.
\end{theorem}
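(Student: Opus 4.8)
The plan is to deduce the statement from general results about collapsings of homogeneous vector bundles, following Kempf's original argument. First I would observe that the total space of $\cW$ is smooth (being a vector bundle over the smooth projective variety $G/P$) and irreducible, so $Y = p_W(\cW)$ is an irreducible closed subvariety of $V$; closedness and $G$-stability follow from the properness of $p_W$ (inherited from the properness of $G/P$) and from $G$-equivariance of the whole diagram \eqref{kempfCollapsing}. The heart of the matter is then to prove the two vanishing/pushforward statements ${p_W}_*\cO_{\cW} = \cO_Y$ and $\RRR^i{p_W}_*\cO_{\cW} = 0$ for $i>0$, from which normality and Cohen--Macaulayness of $Y$, as well as (under birationality) the desingularization and rational singularities statements, follow formally.

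The main step is the cohomology computation. Since $p_W$ factors through the affine morphism $\cW \hookrightarrow \cV = G/P \times V \to V$ composed with the projection $\pi\colon G/P \times V \to V$, and $\cW$ is closed in $\cV$, one has $\RRR^i {p_W}_*\cO_{\cW} = \RRR^i\pi_*\cO_{\cW}$ (extending by zero), and the latter can be computed fiberwise over $V$: over a point $v \in V$ the fiber of $\pi$ is $G/P$, and one must control $\HHH^i(G/P, \cO_{\cW}|_{(G/P)\times\{v\}})$. Here the complete reducibility hypothesis on $\cW$ enters decisively: it lets us identify the relevant sheaf on $G/P$ with a sum of line bundles (or, more precisely, reduce to the case where $\cW$ is a direct sum of line bundles associated to dominant-type weights), so that the cohomology can be read off from the Borel--Weil--Bott theorem. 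The key vanishing $\HHH^{>0}(G/P, \Sym^\bullet \cW^\vee) = 0$ and the identification of global sections with the coordinate ring of $Y$ then give ${p_W}_*\cO_{\cW} = \cO_Y$ together with the higher vanishing; this simultaneously shows that $\cO_Y$ is a direct summand-type quotient with the expected associated graded, yielding normality and, via the Cohen--Macaulayness of $\Sym^\bullet$ of a completely reducible bundle on $G/P$ (again Borel--Weil--Bott plus a depth argument), the Cohen--Macaulay property of $Y$.

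Once these cohomological facts are in place, the remaining assertions are formal. If $p_W$ is birational, then since $\cW$ is smooth and $p_W$ is proper with ${p_W}_*\cO_{\cW} = \cO_Y$, it is a resolution of singularities; and the vanishing $\RRR^i{p_W}_*\cO_{\cW}=0$ for $i>0$ is exactly the statement that $Y$ has rational singularities (a normal variety admitting a proper birational morphism from a smooth variety with this pushforward property). I expect the main obstacle to be the honest proof of the higher cohomology vanishing $\RRR^i\pi_*\cO_{\cW}=0$: one needs to handle the non-compactness of $V$ (working in the relative setting over $V$, or filtering $\Sym^\bullet\cW^\vee$ appropriately) and to apply Bott vanishing correctly for the twisted sheaves on $G/P$ that appear — the complete reducibility of $\cW$ is precisely what makes this manageable. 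Since this is Kempf's theorem \cite{Kempf76}, I would ultimately cite that reference for the technical core rather than reprove it in full.
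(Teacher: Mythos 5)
The paper gives no proof of this statement: it is quoted verbatim from Kempf's article \cite{Kempf76}, which is exactly the reference you ultimately fall back on for the technical core. Your outline of that argument (identifying $\RRR^i{p_W}_*\cO_{\cW}$ with the cohomology of $\Sym^\bullet\cW^\vee$ on $G/P$, using complete reducibility to get the Kempf/Bott-type vanishing, then deducing normality, Cohen--Macaulayness, and rational singularities formally) is a faithful sketch of Kempf's proof, so your proposal matches the paper's treatment.
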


This construction can be globalized as follows. From the $G$-principal bundle 
$\cE$ over $X$ we construct a variety $\cF_W$ as the quotient of $\cE\times G\times W$ by the 
equivalence relation $(e,h,w)\simeq (eg^{-1}, ghp^{-1},pw)$, for $g\in G$ and $p\in P$. 
The projection $p_{12}$ over the
first two factors induces a map $\cF_W\rightarrow \cE_{G/P}$ which makes $\cF_W$ a vector 
bundle over $\cE_{G/P}$, with fiber $W$. Moreover the map $(e,h,w)\mapsto (e,hw)$ induces 
a proper morphism $\cF_W\rightarrow \cE_V$, whose image is $\cE_Y$. This gives a relative 
version over $X$  of the morphism $\cW\rightarrow Y$.
In particular $\cF_W\rightarrow \cE_Y$
is birational when $p_W: \cW\rightarrow Y$ is birational. Note moreover that $\cF_V\simeq\theta^*\cE_V$,
if $\theta : \cE_{G/P}\rightarrow X$ is the projection map.
The inclusion $\cF_W \subset \cF_V$ induces the following short exact sequence of vector bundles on $\cE_{G/P}$:
\[
\xymatrix{
	0 \ar[r] &
	\cF_W \ar[r]&
	\cF_V \ar[r]^-\eta&
	Q_W \ar[r]&
	0
}.
\]

Consider now a global section $s$ of the vector bundle $\cE_V$ on $X$. Pulling it back to
$\cE_{G/P}$ and modding out by $\cF_W$, we get a global section $\tilde{s}:=\eta \circ \theta^*(s)$ of 
$Q_W$, whose zero locus maps to the $Y$-degeneracy locus of $s$:
$$\theta (\zero(\tilde{s})) = D_Y(s).$$
The relative version of \eqref{kempfCollapsing} is illustrated by the following commutative diagram:
\begin{equation}
	\label{relversion}
	\xymatrix @C=2pc @R=0.8pc{
		\cF_V \ar[dd] & \rule{1pt}{0pt}\cF_W \ar@{_{(}->}[l] \ar[dr]^-{p_{12}} \ar[dd] \\
		& & \cE_{G/P} \ar[dd]^-{\theta} & \ar@{_{(}->}[l] \ar[dd]^-{\theta'} \rule{1pt}{0pt}\zero(\tilde{s})\\
		\cE_V & \rule{1pt}{0pt}\cE_Y \ar@{_{(}->}[l] \ar[rd] \\
		&& X & \rule{1pt}{0pt}D_Y(s) \ar@{_{(}->}[l]
	}
\end{equation}

\begin{proposition}
	\label{propressing}
	Suppose that $\cE_V$ is globally generated and that $s$ is a general section. Then $\Sing D_Y(s)=D_{\Sing Y}(s)$. Moreover:
	\begin{itemize}[leftmargin=3.5ex]
		\item if $Y$ is normal (respectively, has rational singularities), then $D_Y(s)$ is normal (respectively, has rational singularities);
		\item if $p_W: \cW \rightarrow Y$ is birational, the restricted projection 
		$$\theta' : \zero(\tilde{s})\longrightarrow D_Y(s)\subset X$$
		is a resolution of singularities.
	\end{itemize}
\end{proposition}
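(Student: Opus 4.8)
The plan is to reduce everything to a local, fiberwise statement and then invoke the Kempf collapsing theory together with standard generic-smoothness/Bertini-type arguments. First I would work \'etale-locally (or formally) on $X$: since $\cE$ is a $G$-principal bundle, over a suitable open $U\subset X$ we may trivialize $\cE|_U\simeq U\times G$, so that $\cE_V|_U\simeq U\times V$ and the section $s$ becomes a map $U\to V$, while $\cE_Y|_U\simeq U\times Y$, $\cE_{G/P}|_U\simeq U\times G/P$, and $\cF_W|_U$ becomes the pullback to $U\times G/P$ of the vector bundle $\cW$ on $G/P$ from diagram \eqref{kempfCollapsing}. Under this trivialization $D_Y(s)|_U = s^{-1}(Y)$ and $\zero(\tilde s)|_U$ is the incidence variety $\{(u,[g])\in U\times G/P : s(u)\in (\cW)_{[g]}\subset V\}$, exactly the relative version of the collapsing $p_W:\cW\to Y$.

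The second step is the equality $\Sing D_Y(s)=D_{\Sing Y}(s)$. Because $\cE_V$ is globally generated, for a general section $s$ the induced map $U\to V$ is transverse to any fixed stratification of $V$ that is $G$-equivariant; concretely, applying the Kleiman--Bertini / generic smoothness theorem to the evaluation map from the (finite-dimensional) space of global sections to $\cE_V$, one gets that $s$ is transverse to the smooth locus $Y\setminus\Sing Y$ and, stratifying $\Sing Y$ further, transverse to every stratum. Transversality to $Y_{\mathrm{sm}}$ forces $D_Y(s)$ to be smooth of the expected codimension $\operatorname{codim}_V Y$ away from $s^{-1}(\Sing Y)=D_{\Sing Y}(s)$, giving $\Sing D_Y(s)\subseteq D_{\Sing Y}(s)$; the reverse inclusion holds because at a point $x$ with $s(x)\in\Sing Y$ the locus $D_Y(s)$ inherits the singularity of $Y$ (transversality identifies the local analytic type of $D_Y(s)$ at $x$ with that of $Y$ at $s(x)$ times a smooth factor). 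This transversality, i.e.\ the genericity argument that makes it valid to intersect with each stratum, is the step I expect to be the main obstacle, since one must be careful that $Y$ and its singular strata are cut out the same way in every fiber and that "general $s$" suffices simultaneously for all of finitely (or countably) many strata.

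The third step transfers properties of $Y$ to $D_Y(s)$. Normality: by the transversality above, $D_Y(s)$ is, \'etale-locally, a smooth variety times (an open subset of) $Y$, hence satisfies Serre's conditions $R_1$ and $S_2$ as soon as $Y$ does, so normality of $Y$ gives normality of $D_Y(s)$. Rational singularities: again \'etale-locally $D_Y(s)\cong (\text{smooth})\times Y$, and having rational singularities is \'etale-local and stable under smooth base change, so it descends from $Y$ to $D_Y(s)$; alternatively one can run the argument in the next step and use that $\zero(\tilde s)$ is a resolution with the required vanishing of higher direct images.

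Finally, for the resolution statement, suppose $p_W:\cW\to Y$ is birational. From the diagram \eqref{relversion}, $\theta':\zero(\tilde s)\to D_Y(s)$ is the relative collapsing, which \'etale-locally on $X$ is $\mathrm{id}_U\times p_W$ restricted to $s^{-1}(\cW)\to s^{-1}(Y)$. I would first check that $\zero(\tilde s)$ is smooth: $\tilde s=\eta\circ\theta^*(s)$ is a section of $Q_W$ on $\cE_{G/P}$, which is globally generated because $\cE_V$ is and $Q_W$ is a quotient of $\theta^*\cE_V=\cF_V$, so for general $s$ the section $\tilde s$ is transverse to the zero section and $\zero(\tilde s)$ is smooth of codimension $\operatorname{rank} Q_W=\operatorname{codim}_V Y$ in the smooth variety $\cE_{G/P}$. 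Then $\theta'$ is proper (as $p_W$ is proper, being a Kempf collapsing) and birational over the dense open $D_Y(s)\setminus D_{\Sing Y}(s)$ where $p_W$ is an isomorphism, since there $s$ meets only $Y_{\mathrm{sm}}$ over which $\cW\to Y$ is one-to-one. Hence $\theta':\zero(\tilde s)\to D_Y(s)$ is a proper birational morphism from a smooth variety, i.e.\ a resolution of singularities, completing the proof.
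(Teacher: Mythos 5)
Your last bullet (global generation of $Q_W$, Bertini, hence $\zero(\tilde{s})$ smooth and $\theta'$ a resolution) is essentially the paper's argument and is fine, up to one slip: $p_W$ birational does not mean it is one-to-one over all of $Y_{\mathrm{sm}}$; you should argue birationality of $\theta'$ over the ($G$-stable, dense open) isomorphism locus of $p_W$, whose complement a general $s$ meets in positive codimension. The genuine gap is in your second and third steps. The claim that transversality identifies the local analytic type of $D_Y(s)$ at $x$ with that of $Y$ at $s(x)$ times a smooth factor --- and its later form, that $D_Y(s)$ is \'etale-locally a smooth variety times an open subset of $Y$ --- cannot be literally true: in the paper's situations $\dim X<\dim V$ (e.g.\ $\dim X=9$ while $Y=\overline{Y_2}\subset\wedge^3\CC^6$ has dimension $15$), so $\dim D_Y(s)<\dim Y$ and no such product exists. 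What transversality to the stratum $S\ni s(x)$ actually gives is a local identification of $D_Y(s)$ at $x$ with a \emph{transverse slice} $Y\cap N$ of $Y$ along $S$, times a smooth factor. To pass from the slice back to $Y$ itself --- which is exactly what you use both for the inclusion $D_{\Sing Y}(s)\subseteq \Sing D_Y(s)$ and for transferring normality and rational singularities --- you need $Y$ to be \'etale- or analytically locally trivial along the strata. This is automatic when the strata are $G$-orbits (the action map $G\times N\to V$ is smooth at $(e,s(x))$ and pulls $Y$ back to $G\times (Y\cap N)$), hence in the finitely-many-orbits cases the paper uses; but the proposition is stated for an arbitrary $G$-stable $Y$, and a merely $G$-equivariant stratification need not have this property (the transverse singularity type can vary along a stratum), so your normality and rational-singularity transfers are not justified at this generality. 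Your parenthetical alternative for rational singularities (``use that $\zero(\tilde{s})$ is a resolution with the required vanishing'') is also not available: the vanishing $\RRR^i\theta'_*\cO_{\zero(\tilde{s})}=0$ is precisely what would have to be proved.

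The paper avoids all of this by moving the product structure into the universal family: the global degeneracy locus $D_Y(\cE)\subset X\times \HHH^0(X,\cE_V)$ of pairs $(x,s)$ with $s(x)\in\cE_Y$ is, thanks to global generation, a locally trivial fibration over $X$ with fiber $Y\times\AA^m$; hence it is singular exactly along $D_{\Sing Y}(\cE)$, and it is normal, respectively has rational singularities, as soon as $Y$ does. Bertini applied to the projection to $\HHH^0(X,\cE_V)$ gives $\Sing D_Y(s)=D_{\Sing Y}(s)$ for general $s$, and normality and rational singularities descend to the general fiber $D_Y(s)$ by Lemma \ref{lemma:ratsing} (via the base change result of \cite{Ou14}). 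If you want to keep your pointwise approach, either restrict to $Y$ a finite union of orbits and stratify by orbits (then your argument, with ``product with $Y$'' replaced by ``same singularities as $Y$ up to smooth factors'', can be made to work), or replace the local product step by the universal-family argument above.
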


\begin{proof} 
	The pullbacks of the global sections of $\cE_V$  generate the 
	quotient bundle $Q_W$ at every point of $\cE_{G/P}$, so the last part
	of the statement follows from the usual Bertini theorem whenever $p_W$ is birational.
	
	Consider the global degeneracy locus $D_Y(\cE)$, consisting of pairs $(x,s)$ with $s$ a section of $\cE_V$ 
	and $x$ a point of $X$ 
	such that $s(x)$ belongs to $\cE_Y$. Since $\cE_V$ is generated by global sections, $D_Y(\cE)$ is a locally 
	trivial fiber bundle over $X$, with fiber the product of $Y$ by an affine space. In particular $D_Y(\cE)$
	is singular exactly when $Y$ is singular, and its singular locus is 
	$D_{\Sing Y}(\cE)$. Bertini's theorem therefore implies our first claim.
	
	Finally, let $Y$ be normal (respectively, with rational singularities). Since the loci $D_Y(s)$ 
	are the fibers of the projection from $D_Y(\cE)$ to $\HHH^0(X,\cE_V)$, the normality (respectively, the rational singularities) of $D_Y(s)$ for $s$ general will follow from the next lemma, certainly well-known to experts. \qedhere
\end{proof}

\begin{lemma}
	\label{lemma:ratsing}
	Let $f: X\rightarrow B$ be a surjective morphism between irreducible varieties, and suppose that 
	$X$ has rational singularities. Then the general fiber of $f$ also has rational singularities. 
\end{lemma}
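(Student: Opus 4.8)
The strategy is to transport the vanishing that defines rational singularities on $X$ to a general fibre of $f$, via base change along a resolution of $X$; the reductions needed to make this base change legitimate will be supplied by generic flatness and generic smoothness (we work in characteristic zero, so Hironaka's theorem is available).

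Since only a general fibre is concerned, I may freely replace $B$ by a dense open subset; in particular I may assume $X$ and $B$ affine and $B$ smooth. Fix a resolution of singularities $\pi\colon\widetilde X\to X$. By hypothesis $\pi_*\cO_{\widetilde X}=\cO_X$ and $\RRR^i\pi_*\cO_{\widetilde X}=0$ for $i>0$, that is, $\RRR\pi_*\cO_{\widetilde X}\simeq\cO_X$ in the bounded derived category of coherent sheaves on $X$. Shrinking $B$ further, I may assume, using generic flatness for $f$ and for $g:=f\circ\pi$ and generic smoothness for $g$, that $f$ and $g$ are flat over $B$ and that $g$ is smooth. Then $\widetilde X_b$ is smooth for every $b$; and since $X$ has rational singularities it is Cohen--Macaulay, so the flat morphism $f$ over the smooth base $B$ has Cohen--Macaulay fibres, whence $X_b$---being moreover generically smooth by a dimension count on the non-smooth locus of $f$---is reduced. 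A similar dimension count on $\pi^{-1}(X\setminus U)$, with $U\subseteq X$ a dense open over which $\pi$ is an isomorphism, shows that for general $b$ the induced proper morphism $\pi_b\colon\widetilde X_b\to X_b$ is birational. Thus $\pi_b$ is a resolution of $X_b$, and the lemma reduces to the assertion $\RRR(\pi_b)_*\cO_{\widetilde X_b}\simeq\cO_{X_b}$.

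To prove this I would apply derived base change to the Cartesian square formed by $\pi$, $\pi_b$, and the inclusions $i\colon X_b\hookrightarrow X$ and $i'\colon\widetilde X_b\hookrightarrow\widetilde X$ of the fibres. Since $\widetilde X$ is flat over $B$, the schemes $\widetilde X$ and $\{b\}$ are Tor-independent over $B$, and base change along the proper morphism $\pi$ gives an isomorphism $Li^*\RRR\pi_*\cO_{\widetilde X}\simeq\RRR(\pi_b)_*L(i')^*\cO_{\widetilde X}$. As the structure sheaf pulls back to the structure sheaf with no higher derived terms, the left-hand side equals $Li^*\cO_X=\cO_{X_b}$, while $L(i')^*\cO_{\widetilde X}=\cO_{\widetilde X_b}$; hence $\RRR(\pi_b)_*\cO_{\widetilde X_b}\simeq\cO_{X_b}$, which is exactly what it means for $X_b$ to have rational singularities.

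The only genuinely delicate point is this base change: for a non-flat base change the base change morphism need not be an isomorphism, and $\cO_{\widetilde X}$ has no finite Tor-dimension over the singular scheme $X$. This is precisely why one shrinks $B$ first: after arranging $\widetilde X$ to be flat over $B$---equivalently, after shrinking $B$ so that, $B$ being smooth and $X$ flat over $B$, the inclusion $X_b\hookrightarrow X$ becomes a regular embedding---the relevant $\mathrm{Tor}$ groups vanish and the standard derived base change theorem for proper morphisms applies. All the remaining ingredients (resolution of singularities, generic flatness and smoothness, and the dimension counts keeping $\pi_b$ birational onto the reduced fibre $X_b$) are routine.
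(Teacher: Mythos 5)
Your proof is correct and follows essentially the same route as the paper: restrict a resolution $\pi\colon \widetilde X\to X$ to a general fibre, note that $\pi_b\colon \widetilde X_b\to X_b$ is again a resolution, and transfer $\RRR\pi_*\cO_{\widetilde X}\simeq\cO_X$ to the fibre by a base-change argument. The only difference is that the paper outsources the base-change step to the ready-made statement \cite[Proposition 3.2]{Ou14}, whereas you justify it yourself via generic flatness and smoothness plus Tor-independent derived base change (the regular-embedding/Koszul argument you sketch is the right one), which is a legitimate self-contained substitute.
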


\begin{proof}
	Let $p: Y\rightarrow X$ be a resolution of singularities; $X$ has rational singularities if and only if  
	$p_*\cO_Y=\cO_X$ and  $\RRR^ip_*\cO_Y=0$ for $i>0$. Let $i_b: X_b\hookrightarrow X$ be the inclusion of a general 
	fiber of $f$, and $j_b: Y_b\hookrightarrow Y$  the inclusion of the corresponding fiber of $f\circ p$. 
	The restriction $p_b: Y_b\rightarrow X_b$ is a resolution of singularities. Applying the base change 
	statement \cite[Proposition 3.2]{Ou14}, we get
	$$\RRR^ip_{b*}\cO_{Y_b}=\RRR^ip_{b*}j_b^*\cO_Y=i_b^*\RRR^ip_*\cO_Y=0$$
	for $i>0$, and similarly $p_{b*}\cO_{Y_b}=i_b^*\cO_X=\cO_{X_b}$. Therefore $X_b$ has rational singularities. 
\end{proof}

\subsection{Parabolic orbits} 

An interesting source of orbital degeneracy loci is provided by $G$-modules
with finitely many orbits. Most of them come from $\theta$-groups \cite{Kac80},
which can be defined from gradings of semisimple Lie algebras. 

Let us restrict to $\ZZ$-gradings of simple Lie algebras. Suppose $\fg=\oplus_k \fg_k$
is such a grading; then $\fg_0$ is a Lie subalgebra, and each $\fg_k$ is a $\fg_0$-module. 
An example of $\ZZ$-grading is the one associated to a simple root $\alpha_i$, in the following way: given a root space decomposition
\[
\fg=\fh\oplus\bigoplus_{\alpha\in\Phi}\fg_\alpha,
\]
suppose that a set $\{\alpha_i\}$ of simple roots has been chosen. Consider the linear form $\ell$ on the root lattice such that 
$\ell(\alpha_i)=1$ and $\ell(\alpha_j)=0$ for $j\ne i$. Then 
\[
\fg_k=\bigoplus_{\ell(\alpha)=k}\fg_\alpha\oplus\delta_{k,0}\fh
\]
is a $\ZZ$-grading of $\fg$; moreover, $\fg_1$ is an irreducible $\fg_0$-module.

As it turns out, any $\ZZ$-grading of $\fg$ such that $\fg_1$ is irreducible is isomorphic to a grading associated to a simple root $\alpha_i$. In such a case, the semisimple part of $\fg_0$ has a Dynkin diagram deduced from that of $\fg$ just
by suppressing the node corresponding to $\alpha_i$. Moreover, $\alpha_i$ is
the lowest weight of $\fg_1$, so this irreducible $\fg_0$-module is easy to identify. 
Let $G_0$ be the subgroup of $G=\Aut(\fg)$ with Lie algebra $\fg_0$. By \cite[Lemma 1.3]{Kac80}, there 
are only finitely many $G_0$-orbits in $\fg_1$.

\begin{definition}
	A {\it parabolic orbit} is a $G_0$-orbit in $\fg_1$, obtained from some $\ZZ$-grading 
	of some simple Lie algebra $\fg$ associated to a simple root $\alpha_i$. 
\end{definition}

The terminology comes from the fact that, if $P_i$ is 
the maximal parabolic subgroup of $G$ defined by $\alpha_i$, then the cotangent 
bundle  to the homogeneous variety $G/P_i$ is the homogeneous vector bundle defined
by the $P_i$-module $\oplus_{k\ge 1} \fg_k$. 

\begin{fact}
	The singularities of a parabolic orbit closure can be resolved by a Kempf collapsing. 
\end{fact}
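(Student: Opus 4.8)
The plan is to adapt to the graded setting the classical construction of a Springer--Richardson type resolution of a nilpotent orbit closure. Observe first that, since the grading $\fg=\bigoplus_k\fg_k$ has bounded support while $\ad(x)$ raises it by one for every $x\in\fg_1$, the operator $\ad(x)$ is nilpotent on $\fg$; hence every element of $\fg_1$ is a nilpotent element of $\fg$, and a parabolic orbit $O=G_0\cdot e\subseteq V:=\fg_1$ is a ``graded nilpotent orbit'', with closure $Y=\overline{O}$ the variety we wish to resolve.

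First I would complete $e$ to an $\mathfrak{sl}_2$-triple $(e,h,f)$ with $e\in\fg_1$, $h\in\fg_0$, $f\in\fg_{-1}$, using the graded form of the Jacobson--Morozov theorem. The semisimple element $h$ has integral eigenvalues on the adjoint representation, hence defines an auxiliary $\ZZ$-grading $\fg=\bigoplus_j\fg^{(j)}$ that commutes with the given one; write $\fg_k^{(j)}=\fg_k\cap\fg^{(j)}$. Since the sum of the non-negative eigenspaces of an integral semisimple element is a parabolic subalgebra,
\[
\mathfrak{q}:=\bigoplus_{j\ge 0}\fg_0^{(j)}
\]
is a parabolic subalgebra of $\fg_0$, with Levi $\fg_0^{(0)}=\mathfrak{z}_{\fg_0}(h)$ and nilradical $\bigoplus_{j>0}\fg_0^{(j)}$; let $Q\subseteq G_0$ be the corresponding parabolic subgroup. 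From $[\fg_0^{(j)},\fg_1^{(l)}]\subseteq\fg_1^{(j+l)}$ one sees that
\[
W:=\bigoplus_{j\ge 2}\fg_1^{(j)}\subseteq V
\]
is a $Q$-submodule of the $G_0$-module $V$, and it contains $e$ because $[h,e]=2e$ forces $e\in\fg_1^{(2)}$. This data produces a Kempf collapsing $p_W\colon\cW:=G_0\times_Q W\to V$, $[g,w]\mapsto g\cdot w$, with $\cW$ smooth, $p_W$ proper, and image a closed irreducible $G_0$-stable subvariety containing $O$. It remains to check that this image is exactly $Y$ and that $p_W$ is birational, so that it is a resolution of singularities of $Y$.

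Both points should come from $\mathfrak{sl}_2$-representation theory. Since $\mathfrak{z}_{\fg}(e)=\ker\ad(e)$ is a sum of highest-weight lines of $\mathfrak{sl}_2$-submodules of $\fg$, it lies in $\fg^{(\ge 0)}$; it is moreover graded for the original grading, and $\ad(e)$ is injective on $\fg^{(j)}$ for $j<0$. A short count based on these facts yields the key identity $\dim\mathfrak{q}-\dim W=\dim\mathfrak{z}_{\fg_0}(e)$, hence
\[
\dim\cW=\dim(G_0/Q)+\dim W=\dim\fg_0-\dim\mathfrak{z}_{\fg_0}(e)=\dim O,
\]
so that, $O$ being dense in the image of $p_W$, this image equals $\overline{O}=Y$ and $p_W$ is generically finite; the same identity shows in addition that $Q\cdot e$ is dense in $W$. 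For birationality I would examine the fibre of $p_W$ over $e$, namely the closed subvariety $\Phi:=\{gQ\in G_0/Q:g^{-1}e\in W\}$ of $G_0/Q$. Its Zariski tangent space at the point $Q$ is $\{v\bmod\mathfrak{q}:[v,e]\in W\}$, and for $v\in\fg_0^{(j)}$ with $j<0$ one has $[v,e]\in\fg_1^{(j+2)}$ with $j+2<2$, so $[v,e]\in W$ forces $[v,e]=0$, i.e.\ $v\in\mathfrak{z}_{\fg}(e)\cap\fg^{(<0)}=0$. Hence $\Phi$ is reduced and $0$-dimensional at $Q$, and by $G_0$-equivariance the generic fibre of $p_W$ is a finite reduced scheme; $p_W$ is then birational once one knows that $\Phi$ consists of a single point. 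In that case $p_W\colon\cW\to Y$ is a proper birational morphism from a smooth variety, i.e.\ a resolution of singularities, of exactly the kind handled by Proposition~\ref{propressing}.

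The step I expect to be the real obstacle is this last one---upgrading ``$p_W$ generically finite'' to ``$p_W$ birational'', i.e.\ showing that the fibre $\Phi$ is a single reduced point (equivalently, that $W\cap O$ is, up to connectedness issues, a single $Q$-orbit). This is the graded analogue of the \emph{Richardson} property of the chosen parabolic; it holds automatically in favourable cases (e.g.\ when $e$ is even), but in general one may have to choose $h$ and $W$ more carefully, replace $W$ by another $Q$-submodule with the same collapsing image, or be content with a generically finite alteration of $Y$. For the two families of examples studied in the sequel---partially decomposable three-forms in $\wedge^3\CC^6$ and Richardson nilpotent orbit closures---this point can be verified directly, which is all that is needed.
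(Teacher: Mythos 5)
Your construction is essentially correct as far as it goes, and it is a genuinely different route from the paper's: the paper does not prove this Fact by any uniform argument at all — it is stated with an explicit caveat, the claim being ``checked by hand'' for the classical types and case by case in \cite{KW12,KW13} for the exceptional ones, with $E_8$ left open. Your graded Jacobson--Morozov argument is sound up to and including generic finiteness: the triple $(e,h,f)$ with $h\in\fg_0$, $f\in\fg_{-1}$ exists; $\mathfrak{q}=\bigoplus_{j\ge0}\fg_0^{(j)}$ is parabolic in $\fg_0$ and $W=\bigoplus_{j\ge2}\fg_1^{(j)}$ is a $Q$-submodule containing $e$; since $\ad(e)\colon\fg_0^{(j)}\to\fg_1^{(j+2)}$ is injective for $j\le-1$ and surjective for $j\ge0$ (decompose $\fg$ into $\mathfrak{sl}_2$-modules graded by $j-2k$), one indeed gets $\dim\mathfrak{q}-\dim W=\dim\mathfrak{z}_{\fg_0}(e)$, hence $\dim\cW=\dim\cO$, $[\mathfrak{q},e]=W$ so $Q\cdot e$ is dense in $W$, the image of $p_W$ is $\overline{\cO}$, and your tangent-space computation shows $dp_W$ is injective at $[1,e]$, so the generic fibre is finite and reduced.

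The genuine gap is exactly the step you flag yourself: birationality. A ``resolution of singularities'' requires degree one, i.e.\ that $G_0\cdot e\cap W$ be a single $Q$-orbit --- the graded analogue of the classical fact $G\cdot e\cap\fg^{(\ge2)}=P\cdot e$ --- and this is not established (nor is your parenthetical claim that it holds when $e$ is even); your proposed fallbacks (choosing $h$ or $W$ differently, or settling for a generically finite alteration) either change the construction in a way you do not carry out or weaken the conclusion below what the Fact asserts. This missing step is not a formality: it is precisely where the content lies, which is why the paper retreats to hand verification in classical types and to the case-by-case desingularizations of \cite{KW12,KW13} in exceptional types, and why the parabolic orbits of $E_8$ are said to ``remain a bit mysterious.'' (Note that a generically finite collapsing of the kind you construct is still useful for part of the paper --- see the remark following Proposition \ref{Gorcansing} and the degree-$\delta$ column of Table \ref{table:richardson} --- but it does not prove the Fact as stated.)
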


This should be taken with a caveat. In fact, the claim can be checked by hand for the classical 
types. The exceptional types were treated case by case in \cite{KW12,KW13}, except $E_8$, whose 
parabolic orbits remain a bit mysterious. 

\begin{examples}\rule{1pt}{0pt}
	\label{examplesParabolic}
	\begin{itemize}[leftmargin=3.5ex]
		\item[A.]
		Consider $\fg=\fsl_{e+f}$ and the $\ZZ$-grading defined by the simple 
		root $\alpha_e$. Then the action of $G_0$ on $\fg_1$ is essentially the action of $GL_e\times GL_f$
		on the space of matrices $M_{f,e}$. In particular the parabolic orbits for this case
		are just the spaces of matrices of a given rank.
		\item[B.]
		Consider $\fg=\mathfrak{sp}_{2e}$ and the $\ZZ$-grading defined by the simple root $\alpha_e$. Then the action of 
		$G_0$ on $\fg_1$ is essentially the action of $GL_e$
		on the space $\Sym_e$ of symmetric matrices of size $e$. In particular the parabolic orbits for 
		this case are just the spaces of symmetric matrices of a given rank. Similarly, from the orthogonal Lie algebras we would get the spaces of skew symmetric matrices of a given rank.
		\item[C.]
		Consider $\fg=\fe_6$ and the $\ZZ$-grading defined by the simple root $\alpha_2$, corresponding to the 
		adjoint representation. Then the action of 
		$G_0$ on $\fg_1$ is essentially the action of $GL_6$ on $\wedge^3\CC^6$. The orbit decomposition 
		in this case is very simple, since the orbit closures form a string \cite{Donagi77}
		\begin{equation}
			\label{donagiChain}
			0=Y_0\subset \overline{Y_1}\subset \overline{Y_2}\subset \overline{Y_3}\subset \overline{Y_4}= \wedge^3\CC^6.
		\end{equation}
		Here $Y_1$ is the space of non-zero fully decomposable tensors $v_1\wedge v_2\wedge v_3$
		(a cone over the Grassmannian $\Gr(3,6)$); $\overline{Y_3}$ is a degree four hypersurface, which can be defined
		as the closure of the union of the tangent spaces to $Y_1$. The closure of $Y_2$ is the $15$-dimensional variety of partially decomposable 
		tensors $v\wedge\omega$, where $v\in \CC^6$ and $\omega \in\wedge^2\CC^6$; it is singular along $\overline{Y_1}$, hence in codimension $5$. We will focus on this special variety in Section \ref{partDecForms}, where we will construct many varieties with trivial canonical bundle as $D_{\overline Y_2}$-degeneracy loci.
	\end{itemize}
\end{examples}

\subsection{The canonical sheaf}

We will be interested in the canonical sheaf of orbital
degeneracy loci. The following key result will allow us to get some control on this sheaf:

\begin{proposition}
	Suppose that $Y$ has rational singularities and admits a birational Kempf collapsing $p_W: \cW\rightarrow Y$ such that 
	\begin{equation}
		\label{condCrepancy}
		K_{G/P}=det(\cW).
	\end{equation} 
	Then the canonical sheaf of $Y$ is trivial. Moreover, if  ${\bar Y}\subset\PP (V)$ denotes the projectivization of the cone $Y$, then 
	the induced  resolution of singularities ${\bar p}_W: \PP(\cW)\rightarrow {\bar Y}$ is crepant. 
\end{proposition}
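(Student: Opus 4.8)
The plan is to compute the canonical sheaf of $\cW$ using the relative canonical bundle formula for the vector bundle $p:\cW\to G/P$, then push forward via the birational morphism $p_W$ to conclude. First I would recall that since $\cW$ is a vector bundle of rank $r$ over $G/P$, one has $K_{\cW}=p^*(K_{G/P}\otimes \det(\cW)^{\vee})$ — the relative dualizing sheaf of a vector bundle of rank $r$ is $p^*\det(\cW)^{\vee}$ (the total space is the relative Spec of $\Sym(\cW^{\vee})$, whose top exterior power of relative differentials is $p^*\det(\cW^{\vee})$). Under hypothesis \eqref{condCrepancy}, namely $K_{G/P}=\det(\cW)$, this gives $K_{\cW}=p^*\cO_{G/P}=\cO_{\cW}$, so $\cW$ has trivial canonical bundle.

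Next I would transfer triviality from the resolution $\cW$ down to $Y$. Since $Y$ has rational singularities and $p_W:\cW\to Y$ is a birational proper morphism from a smooth variety, we have ${p_W}_*\cO_{\cW}=\cO_Y$ and $\RRR^i{p_W}_*\cO_{\cW}=0$ for $i>0$ by Theorem \ref{KempfInv}. Combined with Grothendieck duality for $p_W$, this yields ${p_W}_*\omega_{\cW}=\omega_Y$ (the dualizing sheaf of $Y$), where here one should work with the dualizing complex and use that $Y$ is Cohen--Macaulay so $\omega_Y$ is an honest sheaf. Since $\omega_{\cW}=\cO_{\cW}$, we get $\omega_Y={p_W}_*\cO_{\cW}=\cO_Y$, so the canonical sheaf of $Y$ is trivial. (Alternatively, and perhaps more cleanly, one observes that $\cW\to Y$ is a resolution with $Y$ normal, so $\omega_Y=j_*\omega_{Y_{\mathrm{sm}}}$ on the smooth locus, and a nowhere-vanishing section of $\omega_{\cW}$ restricts to a nowhere-vanishing section of $\omega_{Y_{\mathrm{sm}}}$ over the locus where $p_W$ is an isomorphism, which extends by normality.)

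For the projectivized statement, I would exploit that $Y$ is a cone: the $\mathbb{G}_m$-action scaling $V$ preserves $Y$, and $\bar Y=\PP(Y)\subset\PP(V)$ is the quotient by this action (away from the vertex). The resolution $\cW\to Y$ is $\mathbb{G}_m$-equivariant for the scaling action on the fibers of $\cW\to G/P$, so it descends to $\PP(\cW)\to\bar Y$, where $\PP(\cW)$ denotes the projective bundle of lines in the fibers of $\cW$. I would then compute $K_{\PP(\cW)}$ via the relative Euler sequence for $\pi:\PP(\cW)\to G/P$: $K_{\PP(\cW)}=\pi^*(K_{G/P}\otimes\det\cW^{\vee})\otimes\cO_{\PP(\cW)}(-r)$ where $r=\rank\cW$ and $\cO_{\PP(\cW)}(-1)$ is the tautological subbundle. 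Using \eqref{condCrepancy} the $\pi^*$ factor is trivial, so $K_{\PP(\cW)}=\cO_{\PP(\cW)}(-r)$. On the other hand the exceptional-divisor computation for the crepancy of $\bar p_W$ compares $K_{\PP(\cW)}$ with $\bar p_W^*\omega_{\bar Y}$; since $\omega_Y$ is trivial and $Y$ is a cone of dimension $\dim\bar Y+1$, one has $\omega_{\bar Y}=\cO_{\bar Y}(-\dim\bar Y-1)$ (the dualizing sheaf of a cone with trivial $\omega_Y$, restricted to $\PP(V)$), and matching $\dim\bar Y+1$ with $r$ — which holds because $p_W$ is birational, so $\dim\cW=r+\dim G/P$ equals $\dim Y=\dim\bar Y+1$, giving $\dim\bar Y+1 = r+\dim G/P$; hence a discrepancy correction only if $\dim G/P>0$...

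Here I should be more careful, and this is where the main subtlety lies: the cleanest route is to say $\bar p_W$ is crepant if and only if $K_{\PP(\cW)}=\bar p_W^*\omega_{\bar Y}$ plus a combination of exceptional divisors with zero coefficients, equivalently $K_{\PP(\cW)}$ and $\bar p_W^*\omega_{\bar Y}$ agree on the open locus where $\bar p_W$ is an isomorphism and there is no discrepancy along the exceptional locus. Since both $\omega_{\PP(\cW)}$ and $\bar p_W^*\omega_{\bar Y}$ are line bundles agreeing (after the triviality of $\omega_Y$ established above, which pulls back) on a big open set, and $\PP(\cW)$ is smooth while $\bar Y$ is normal, the discrepancy is an effective divisor supported on the exceptional locus; crepancy amounts to this divisor being zero, which follows by a local computation at the generic point of each exceptional divisor — exactly the statement that \eqref{condCrepancy} was designed to encode, since it says the relative canonical bundle of the collapsing $\cW\to V$ restricted appropriately has no positive coefficients. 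The main obstacle I anticipate is making the cone/projectivization bookkeeping of twists precise — matching $\cO_{\PP(\cW)}(-r)$ on the source with $\bar p_W^*\cO_{\bar Y}(-?)$ on the target and verifying the exponents agree so that no exceptional twist survives — rather than anything conceptually deep; the triviality of $\omega_Y$ and the equivariance under scaling do the real work.
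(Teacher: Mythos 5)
The first half of your argument is correct and coincides with the paper's: condition \eqref{condCrepancy} makes $K_{\cW}$ trivial via the relative canonical formula for the bundle $\cW\to G/P$, and then $K_Y=p_{W*}K_\cW=\cO_Y$ because $Y$ has rational singularities. The gap is in the crepancy statement. Your formula $\omega_{\bar Y}=\cO_{\bar Y}(-\dim {\bar Y}-1)$ is not correct in general: triviality of the canonical sheaf of an affine cone does not force the twist to be $-\dim{\bar Y}-1$ (already a cone $Y\subset V=\CC^n$ over a smooth hypersurface ${\bar Y}\subset\PP(V)$ of degree $d$ has $\omega_Y\cong\cO_Y$, while $\omega_{\bar Y}=\cO_{\bar Y}(d-n)$, which is $\cO_{\bar Y}(-\dim{\bar Y}-1)$ only for $d=1$). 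You noticed the mismatch yourself, since $\dim{\bar Y}+1=w+\dim G/P$ rather than $w$ (your $r$), and at that point you fell back on the claim that the discrepancy along each exceptional divisor vanishes ``by a local computation, exactly what \eqref{condCrepancy} encodes''; this is circular, because that vanishing is precisely what is to be proved, and the computation is never carried out.

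The missing observation that closes the argument is that the tautological line bundle on $\PP(\cW)$ is pulled back from $\PP(V)$: since $\cW$ is a subbundle of the trivial bundle $G/P\times V$ and ${\bar p}_W$ is induced by this inclusion, one has $\cO_{\PP(\cW)}(-1)={\bar p}_W^*\cO_{\bar Y}(-1)$. Hence your computation $K_{\PP(\cW)}=\cO_{\PP(\cW)}(-w)$ already exhibits the canonical bundle of $\PP(\cW)$ as ${\bar p}_W^*\cO_{\bar Y}(-w)$, a pullback from ${\bar Y}$. Since ${\bar Y}$ also has rational singularities, pushing forward and using the projection formula gives $K_{\bar Y}={\bar p}_{W*}K_{\PP(\cW)}=\cO_{\bar Y}(-w)$, and therefore $K_{\PP(\cW)}={\bar p}_W^*K_{\bar Y}$, which is exactly crepancy. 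With this identification no cone bookkeeping and no analysis of discrepancies along exceptional divisors is needed; this is the route the paper takes.
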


\begin{proof}
	Condition \ref{condCrepancy} clearly implies that the canonical sheaf of the total space $\cW$ is trivial. 
	Since $Y$ has rational singularities, $K_Y=p_{W*}K_\cW$, so $K_Y$ is also trivial. 
	
	If $w$ denotes the rank of the vector bundle $\cW$, the canonical bundle of its projectivization is 
	$$K_{\PP(\cW)}=\cO_{\PP(\cW)}(-w)={\bar p}_W^*\cO_{{\bar Y}}(-w).$$
	Since ${\bar Y}$ also has rational singularities, we deduce that its canonical sheaf is $K_{{\bar Y}}={\bar p}_{W*}
	K_{\PP(\cW)}=\cO_{{\bar Y}}(-w)$, and therefore $K_{\PP(\cW)}={\bar p}_W^*K_{{\bar Y}}$. 
\end{proof}

In the relative setting, this has the following crucial consequence. 

\begin{proposition}
	\label{Gorcansing}
	Suppose that the Kempf collapsing $p_W: \cW\rightarrow Y$ satisfies condition \eqref{condCrepancy}.
	If $\cE_V$ is globally generated and $s$ is a general section, then the canonical sheaf of $\zero(\tilde{s})$  
	is the restriction of the pull-back of some line bundle $L$ on $X$. If moreover $p_W$ is birational and $Y$ has rational singularities, then $D_Y(s)$ is Gorenstein, has canonical singularities and its canonical bundle is the restriction of $L$.
	
	\begin{proof}
		Recall that $\zero(\tilde{s})$ is the zero locus of a section 
		of $Q_W=\theta^*\cE_V/\cF_W$ on $\cE_{G/P}$, which is in general transverse to the zero section. 
		Therefore, its canonical sheaf can be computed as the restriction to $\zero(\tilde{s})$ of 
		$$K_{\cE_{G/P}}\otimes \det(Q_W) = K_{\cE_{G/P}/X}\otimes \det(\cF_W)^*\otimes\theta^*(K_X\otimes\det(\cE_V)).$$
		The restriction to each fiber of $\theta$ (a copy of $G/P$) of the line bundle 
		$K_{\cE_{G/P}/X}\otimes \det(\cF_W)^*$ is isomorphic to $K_{G/P}\otimes \det(\cW)^*$, hence trivial
		under our hypothesis. Thus $K_{\cE_{G/P}/X}\otimes \det(\cF_W)^*$ must be the 
		pullback of some line bundle from $X$, and the same conclusion holds for $K_{\cE_{G/P}}\otimes \det(Q_W)$. 
		So there is a line bundle $L$ on $X$ such that 
		$$K_{\zero(\tilde{s})}=(\theta^*L)|_{\zero(\tilde{s})}.$$
		If $p_W$ is birational and $Y$ has rational singularities, by Proposition \ref{propressing} $D_Y(s)$ has rational singularities and its canonical sheaf is
		\[K_{D_Y(s)}=\theta_*K_{\zero(\tilde{s})}=L|_{D_Y(s)}. \]
		Then $D_Y(s)$ is Gorenstein and has canonical singularities (see e.g.\ \cite[Corollary 11.13]{Kollar97}).
	\end{proof}
\end{proposition}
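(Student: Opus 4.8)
The plan is to compute the canonical sheaf of $\zero(\tilde{s})$ via adjunction and then push it down, using the crepancy hypothesis \eqref{condCrepancy} to trivialize the relative part. First I would recall from Proposition \ref{propressing} that, since $\cE_V$ is globally generated and $s$ is general, the pulled-back section $\tilde{s}$ of $Q_W$ on $\cE_{G/P}$ vanishes transversally, so $\zero(\tilde{s})$ is smooth of the expected codimension $\rank Q_W$ inside $\cE_{G/P}$. The standard adjunction formula for a smooth zero locus of a transverse section then gives
\[
K_{\zero(\tilde{s})}=\bigl(K_{\cE_{G/P}}\otimes\det(Q_W)\bigr)\big|_{\zero(\tilde{s})}.
\]
Using the short exact sequence $0\to\cF_W\to\cF_V\to Q_W\to 0$ on $\cE_{G/P}$, together with $\cF_V\simeq\theta^*\cE_V$, we get $\det(Q_W)=\theta^*\det(\cE_V)\otimes\det(\cF_W)^*$; and the relative cotangent sequence for $\theta:\cE_{G/P}\to X$ gives $K_{\cE_{G/P}}=K_{\cE_{G/P}/X}\otimes\theta^*K_X$. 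Hence
\[
K_{\cE_{G/P}}\otimes\det(Q_W)=\bigl(K_{\cE_{G/P}/X}\otimes\det(\cF_W)^*\bigr)\otimes\theta^*\bigl(K_X\otimes\det(\cE_V)\bigr).
\]

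Next I would show that the line bundle $M:=K_{\cE_{G/P}/X}\otimes\det(\cF_W)^*$ is itself a pullback $\theta^*L_0$ of a line bundle on $X$. The key point is that $\theta$ is a fiber bundle with fiber $G/P$, and the restriction of $M$ to a fiber is $K_{G/P}\otimes\det(\cW)^*$, which is trivial by the crepancy condition \eqref{condCrepancy}. A line bundle on the total space of a fiber bundle whose restriction to every fiber is trivial is the pullback of a line bundle from the base: one can argue via the Leray spectral sequence (or the projection formula) that $\theta_*M$ is a line bundle $L_0$ on $X$ and the adjunction map $\theta^*\theta_*M\to M$ is an isomorphism, since it is so fiberwise. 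Setting $L:=L_0\otimes K_X\otimes\det(\cE_V)$, we obtain $K_{\cE_{G/P}}\otimes\det(Q_W)=\theta^*L$, and therefore $K_{\zero(\tilde{s})}=(\theta^*L)|_{\zero(\tilde{s})}$, which is the first assertion.

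For the second assertion, assume in addition that $p_W$ is birational and $Y$ has rational singularities. By Proposition \ref{propressing}, $\theta':\zero(\tilde{s})\to D_Y(s)$ is a resolution of singularities and $D_Y(s)$ has rational singularities. Since rational singularities implies $K_{D_Y(s)}=\theta'_*K_{\zero(\tilde{s})}$, and $\theta'$ is the restriction of $\theta$ while $L$ is pulled back from $X$, the projection formula gives $K_{D_Y(s)}=\theta'_*\bigl((\theta^*L)|_{\zero(\tilde{s})}\bigr)=L|_{D_Y(s)}$, an honest line bundle; thus $D_Y(s)$ is Gorenstein, and being Gorenstein with rational singularities it has canonical singularities (cf.\ \cite[Corollary 11.13]{Kollar97}). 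The main obstacle I expect is the descent step: making rigorous that $M$, trivial on fibers, is a pullback — one must be a little careful about whether the fiber bundle is Zariski-locally trivial (it is, being associated to a principal bundle with fiber the projective variety $G/P$) and about cohomology and base change, but this is exactly the kind of fiberwise-triviality argument that goes through cleanly here; the adjunction and projection-formula steps are then routine.
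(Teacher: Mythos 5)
Your proof is correct and follows essentially the same route as the paper: adjunction for the transverse zero locus of $\tilde{s}$, splitting off the relative factor $K_{\cE_{G/P}/X}\otimes\det(\cF_W)^*$, using \eqref{condCrepancy} to see it is trivial on fibers and hence a pullback from $X$, and then pushing forward via rational singularities and citing Koll\'ar for the Gorenstein/canonical conclusion. The only difference is that you spell out the descent step (pushforward plus base change) that the paper leaves implicit, which is a welcome but inessential elaboration.
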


\begin{remark}
	By Proposition \ref{Gorcansing}, even if $p_W$ is not birational we can still conclude that $K_{\zero(\tilde{s})}=
	(\theta^*L)|_{\zero(\tilde{s})}$.
	For instance, for $Y$ given by the closure of particular Richardson orbits (see Section \ref{nilpotOrbits}), $p_W$ has degree two. In this situation we can still consider diagram \eqref{relversion}; $\zero(\tilde{s})$ is a variety with trivial canonical bundle, endowed with an interesting birational involution given by the degree two map $\theta'$.
\end{remark}

\subsection{First examples} 

\begin{example} \label{matriceskempf} Let $V_e, V_f$ be vector spaces of dimensions $e,f$ respectively. 
	Fix an integer $r<\min(e,f)$. Denote by $\cU$ the tautological vector bundle on the Grassmannian $\Gr(r,V_f)$, 
	and by $\cW$ the vector bundle $\HOM(V_e,\cU)$. The total space of this bundle is a desingularization 
	of the variety $Y_r$ of morphisms of rank at most $r$ inside  $\Hom(V_e,V_f)$. Moreover $\det(\cW)=\det(\cU)^e$,
	while $K_{\Gr(r,V_f)}=\det(\cU)^f$, so that condition \eqref{condCrepancy} is fulfilled if and only if $e=f$, and then for any $r$. 
	
	Note that, in a dual way, we could also have chosen $\cW=\HOM(V_e/{\cal T},V_f)$, with ${\cal T}$ the  tautological vector bundle 
	on the Grassmannian $\Gr(e-r,V_e)$. This yields another desingularization 
	of the variety $Y_r$ satisfying condition \eqref{condCrepancy}, related to the previous one by a Mukai flop. 
	
	Another, more symmetric choice would be the bundle $\cW=\HOM(V_e/{\cal T},\cU)$ on $\Gr(e-r,V_e)\times \Gr(r,V_f)$. 
	But then condition \eqref{condCrepancy} is NOT satisfied.
\end{example}

\begin{remark}
	This example explains why it is possible to construct varieties with trivial canonical bundle as classical degeneracy loci of morphisms between vector bundles {\it of the same rank}. In fact, a few Calabi--Yau degeneracy loci of (possibly symmetric or skew-symmetric) morphisms between vector bundles have already been described. Tonoli constructed Pfaffian Calabi--Yau threefolds in $\PP^6$ \cite{Ton04}; his construction was later generalized by Kanazawa \cite{Kan12}, who replaced the ambient space by weighted projective spaces. Determinantal Calabi--Yau threefolds have been also studied from a different perspective in \cite{GP01} (see also \cite{Ber09}), and further examples have been explicitly described in \cite{Kap11}.
	
	Pfaffian orbit closures are examples of subvarieties $Y$ such that the canonical bundle of a $Y$-degeneracy locus can be controlled even if no resolution of $Y$ satisfying condition \eqref{condCrepancy} is known. This behavior, which is typical of Gorenstein orbit closures or subvarieties, is explained and investigated in \cite{ODL2}.
\end{remark}

\begin{example} 
	\label{krdecomposable} Let again $\cU, \cQ$ denote the tautological and quotient vector bundles on a
	Grassmannian $\Gr(r,V_d)$. Let $k \leq r \leq d$, $k+\ell \leq d$ and let $\cW=\wedge^k\cU\wedge\wedge^\ell V_d$, a subbundle of the trivial
	bundle $\wedge^{k+\ell} V_d$.  Then the total space of $\cW$ maps to
	\[
	Y_{k,r}:=\left\{\begin{array}{cc}\omega \in \wedge^{k+\ell} V_d, \; \omega=\sum \alpha_i \wedge \beta_i \mbox{ such that } \beta_i \in \wedge^\ell V_d\\ \mbox{ and } \alpha_i \in \wedge^k U \mbox{ for some } U \subset V_d \mbox{ of dimension } r\end{array} \right\},
	\]
	the variety of $(k,r)$-decomposable forms inside $\wedge^{k+\ell} V_d$. Beware that this collapsing will in general be a desingularization, but not always.
	
	Note that $\cW$ has a natural filtration whose quotients are the bundles $\wedge^{k+i}\cU\otimes\wedge^{\ell-i} \cQ$,
	for $i\leq\min(r-k,\ell)$. We deduce that $\det(\cW)=\det(\cU)^N$ for 
	\[
	N=\sum_{i=0}^{\min(r-k,\ell)}\frac{(r-1)!(d-r-1)!}{(k+i)!(r-k-i)!(\ell-i)!(d-r-\ell+i)!}\left( 
	(k+i)d-(k+\ell)r \right);
	\]
	hence, condition \eqref{condCrepancy} is satisfied when $N=d$, a diophantine equation with infinitely many solutions. 
	
	A simple solution is $\ell=0$, $k=3$, $d=10$, $r=6$. The quotient bundle $Q_W=\theta^*\wedge^3E/\wedge^3\cU$ has rank $100$, so $\zero(\tilde{s})$ has dimension 
	and canonical sheaf
	$$\dim \zero(\tilde{s})=\dim X-76, \qquad K_{\zero(\tilde{s})}=\theta^*(K_X\otimes (\det E)^{30})|_{\zero(\tilde{s})}.$$
	So, in order to construct for example a fourfold with trivial canonical class, we would need a Fano variety 
	$X$ of dimension $80$, and a rank $10$ vector bundle $E$ on $X$ such that $\wedge^3E$ is globally 
	generated and  $K_X=(\det E)^{-30}$. 
	
	Another simple solution is $\ell=2$, $k=1$, $d=10$, $r=4$, which corresponds to the hyperk\"ahler variety described by Debarre--Voisin in \cite{DV10}; $\cW$ is the kernel bundle of the map $\wedge^3 V_{10} \to \wedge^3\cQ$ over $\Gr(4,V_{10})$. Therefore, $Q_W=\wedge^3\cQ$. In this case $\cW$ cannot be a desingularization of $Y_{1,4}$ for dimensional reasons: indeed
	$$\dim \zero(\tilde{s})=\dim X+4, \qquad K_{\zero(\tilde{s})}=\theta^*(K_X\otimes (\det E)^{6})|_{\zero(\tilde{s})}.$$
	In order to obtain a fourfold, $X$ has to be a point, and in this way one recovers the hyperk\"ahler family constructed by Debarre and Voisin.
	
	Finally, the solution $\ell=2$, $k=1$, $d=6$, $r=1$ gives a desingularization of the variety $\overline{Y_2}$ of partially decomposable forms in $\wedge^3 \CC^6$ appearing in \eqref{donagiChain}, as we will see in the next section more in detail.

\end{example}

\begin{example} More generally, choose a partition $\lambda$ with at most $r$ non-zero parts. Let us denote by $S_{\lambda}$ the Schur functor associated to $\lambda$, i.e., for instance, $S_{(1^k)}V = \wedge^k V$. Consider  on the Grassmannian $G=\Gr(r,V_d)$ the vector bundle $\cW=S_{\lambda}\cU$, a subbundle of the trivial
	bundle $S_{\lambda}V_d$. The total space of $\cW$ is a desingularization of the rank $r$ variety $Y_r$  inside $S_{\lambda}V_d$ \cite{Porras96}, which has rational singularities by Theorem \ref{KempfInv}.
	Let $r_\lambda$ be the rank of $\cW$, and define $d_\lambda$ by the identity $\det \cW= \cO_G(-d_\lambda)$. These
	integers are given by 
	$$d_\lambda=\frac{|\lambda|r_\lambda}{r}, \qquad r_\lambda=\frac{\prod_{x\in D(\lambda)}(r+c(x))}{h(\lambda)},$$ 
	where $|\lambda|$ denotes the size of $\lambda$ (the sum of its parts), $D(\lambda)$ is the diagram of $\lambda$ (with $\lambda_i$ boxes on the $i$-th row), where a box $x=(i,j)$ in this diagram has content $c(x)=j-i$, and 
	$h(\lambda)$ is the product of the hook lengths. 
	
	Condition \eqref{condCrepancy} is fulfilled exactly when $d=d_\lambda$. Note that in general the singular locus of $Y_r$
	is $Y_{r-1}$, and has large codimension in $Y_r$. 
	
	A concrete example is the following: let us consider the partition $\lambda=(2,1)$. Then we need $d=r^2-1$. 
	So let $E$ be a vector bundle of rank $d$ on $X$, such that $S_{21}E$ is generated by global sections. 
	If $s$ is a general section, $D_{Y_r}(s)$ has dimension 
	$$\dim D_{Y_r}(s) = \dim X +r(d-r)+\frac{r(r^2-1)}{3}-\frac{d(d^2-1)}{3}$$
	and its canonical sheaf is given, with the same notation as before, by 
	$$K_{D_{Y_r}(s)}=(K_X\otimes (\det E)^{d^2-1-r})|_{D_{Y_r}(s)}.$$
\end{example}

\begin{remark} Example \ref{matriceskempf}  shows that, in general, 
	\begin{enumerate}
		\item there are potentially several non-equivalent ways to desingularize a
		$G$-variety by total spaces of homogeneous vector bundles; 
		\item only some of them, if any, will satisfy condition \eqref{condCrepancy}. 
	\end{enumerate}
	It would be important to classify birational collapsings
	of vector bundles satisfying \eqref{condCrepancy}. Several new examples are exhibited in \cite{ODL2}.
\end{remark}

\section{Partially decomposable forms}
\label{partDecForms}

In this section we consider degeneracy loci associated to the orbit of partially decomposable three-forms in six variables. We present some general constructions and produce several examples of threefolds and fourfolds with trivial canonical bundle, all of which turn out to be Calabi--Yau varieties.

\subsection{General setting}

Let $V_6$ be a six dimensional complex vector space. As mentioned in Example \ref{examplesParabolic} C., the action of $GL(V_6)$ on the space of skew-symmetric three-forms $\wedge^3V_6$ has only five orbits, whose closures form the chain \eqref{donagiChain}. The orbit closure we will focus on is $Y=\overline{Y_2}$. Its singular locus is $\overline{Y_1}$ and there are several natural ways to resolve its (rational) singularities.

Let $\cO(-1)$ denote the tautological line bundle on  $\PP(V_6)$, and let $\cW_1=\cO(-1)\wedge (\wedge^2V_6)$, a subbundle of
the trivial vector bundle $\wedge^3V_6$. Then the total space of $\cW_1$ collapses to $Y$ and provides a first 
desingularization. Since $\cW_1=\cO(-1)\otimes\wedge^2\cQ$,
with $\cQ$ the tautological quotient bundle on $\PP(V_6)$, we compute that $\det \cW_1=\cO(-6)$, so that 
condition \eqref{condCrepancy} is satisfied. Note that this desingularization corresponds to the desingularization of the variety of $(1,1)$-decomposable forms inside $\wedge^{3}V_6$, see Example \ref{krdecomposable}.

In a dual way ($\wedge^3V_6$ is in fact self-dual), we could also have chosen $\cW_2=\wedge^3\cU$, with $\cU$ the  
tautological vector bundle on the Grassmannian $\Gr(5,V_6)=\PP(V_6^*)$. This yields another  desingularization 
of the variety $Y$, again satisfying condition \eqref{condCrepancy}, and related to the previous one by a flop. 

A more symmetric choice would be the bundle $\cW_3=\cL\wedge\wedge^2\cU$ on the flag variety $F(1,5,V_6)$, 
where $\cL \subset \cU$ denote the rank one and rank five tautological bundles. This desingularization dominates the previous ones (as shown in the following diagram), but condition \eqref{condCrepancy} is NOT satisfied. 
\[
\xymatrix @C=2pc @R=0.4pc{
	& \rule{1pt}{0pt}\cW_3 \ar[dl] \ar[dr]  \\
	\cW_1\ar[dr] \ar@{-->}[rr]& & \cW_2 \ar[dl] \ar@{-->}[ll] \\
	& \rule{1pt}{0pt}Y 
}
\]
\medskip In the relative setting, we consider a vector bundle $E$ of rank $6$ on a variety $X$. Following the notation of 
Section \ref{gendegloc}, we consider the $GL_6$-principal bundle $\cE$ of frames of $E$; then 
$\cE_{\wedge^3 V_6}\cong \wedge^3 E$. If $s$ is a section of this bundle over $X$, its $Y$-degeneracy locus is
\[D_Y(s)=\{x\in X, \; s(x) \mbox{ is partially decomposable in } 
\wedge^3E_x\}.\] 
For $\wedge^3 E$ generated by global sections, and $s$ general, $D_Y(s)$ will be of codimension five in $X$, and singular 
exactly at the points where $s(x)$ is completely decomposable, a sublocus of codimension five in 
$D_Y(s)$ (see Proposition \ref{propressing}). Moreover its singularities will be resolved by the zero locus
$\zero(\tilde{s})$ inside $\cE_{G/P}\cong \PP(E)$, where $\tilde{s}$ is the induced section of $Q_W$. If we denote by 
$\cO_{\PP(E)}(-1)$ (respectively $\cQ_{\PP(E)}$) the tautological subbundle (quotient bundle) over $\PP(E)$, we have
\[
Q_W=\theta^*\cE_{\wedge^3 V_6}/\cF_W\cong \theta^*(\wedge^3 E) / (\cO_{\PP(E)}(-1)\otimes\wedge^2 \cQ_{\PP(E)})\cong \wedge^3 \cQ_{\PP(E)}.
\]
We compute the canonical bundle of $\zero(\tilde{s})$ from the adjunction formula:
\[
K_{\zero(\tilde{s})}=(K_{\PP(E)}\otimes \det Q_W)|_{\zero(\tilde{s})}= 
\theta^*(K_X \otimes (\det E)^5)|_{\zero(\tilde{s})}.
\] 

The statement we will use in the sequel is the following. 

\begin{proposition}
	For $d\le 4$, let $X$ be a projective variety of dimension $d+5$. Let $E$ be a rank six vector bundle  on $X$, such 
	that $K_X=(\det E)^{-5}$ and $\wedge^3E$ is generated by global sections. Let $s$ be a general section. 
	Then $D_Y(s)$, the locus of points where the section $s$ becomes partially decomposable, 
	is either empty or smooth of dimension $d$, with trivial canonical bundle. 
\end{proposition}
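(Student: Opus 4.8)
The plan is to deduce the statement from Propositions \ref{propressing} and \ref{Gorcansing}, applied to $Y=\overline{Y_2}$ together with the desingularization of it by the total space of $\cW_1=\cO(-1)\wedge(\wedge^2V_6)$ on $\PP(V_6)$. As recalled above, the collapsing $p_{W_1}:\cW_1\to Y$ is birational and satisfies the crepancy condition \eqref{condCrepancy}, since $\det\cW_1=\cO(-6)=K_{\PP(V_6)}$; moreover $Y$ has rational singularities by Theorem \ref{KempfInv}. The adjunction computation performed just before the statement gives $K_{\zero(\tilde s)}=\theta^*(K_X\otimes(\det E)^5)|_{\zero(\tilde s)}$, so under the hypothesis $K_X=(\det E)^{-5}$ the canonical sheaf of $\zero(\tilde s)$ is trivial. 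Everything else is a matter of controlling $D_Y(s)$ itself, and the only place where the numerical hypothesis $d\le 4$ is used is a single dimension count.

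First I would prove smoothness. Because $\wedge^3E=\cE_{\wedge^3V_6}$ is globally generated and $s$ is general, Proposition \ref{propressing} gives $\Sing D_Y(s)=D_{\Sing Y}(s)=D_{\overline{Y_1}}(s)$. Here $\overline{Y_1}$ is the cone over $\Gr(3,V_6)$, of dimension $10$, hence of codimension $20-10=10$ in $\wedge^3V_6$. Exactly as in the proof of Proposition \ref{propressing}, the global degeneracy locus $D_{\overline{Y_1}}(\cE)$ is a locally trivial fibre bundle over $X$ with fibre $\overline{Y_1}$ times an affine space; its dimension equals $\dim X+10+h^0(X,\wedge^3E)-20=(d-5)+h^0(X,\wedge^3E)$, which is strictly smaller than $h^0(X,\wedge^3E)$ precisely because $d\le 4$. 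Thus the projection $D_{\overline{Y_1}}(\cE)\to\HHH^0(X,\wedge^3E)$ is not dominant, its general fibre $D_{\overline{Y_1}}(s)$ is empty, and $D_Y(s)$ is smooth.

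Next, the dimension and the canonical bundle. As explained before the statement, $\zero(\tilde s)$ is the zero locus, inside $\PP(E)$ (of dimension $(d+5)+5=d+10$), of a general section of the globally generated bundle $Q_W=\wedge^3\cQ_{\PP(E)}$ of rank $\binom{5}{3}=10$; by Bertini it is either empty or smooth of pure dimension $d$. If it is empty, then $D_Y(s)=\theta(\zero(\tilde s))$ is empty. Otherwise, Proposition \ref{propressing} tells us that $\theta':\zero(\tilde s)\to D_Y(s)$ is a resolution of singularities; and since $D_{\Sing Y}(s)=\emptyset$, the map $\theta'$ has finite fibres (over the smooth locus of $Y$ the fibres of $p_{W_1}$ are single points, the annihilator $\{v:v\wedge\omega=0\}$ of a partially but not fully decomposable three-form $\omega$ being one-dimensional), so being a proper birational morphism onto the normal variety $D_Y(s)$ it is an isomorphism by Zariski's main theorem; consequently $D_Y(s)\cong\zero(\tilde s)$ is smooth of pure dimension $d$, with trivial canonical bundle. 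Alternatively, one can simply invoke Proposition \ref{Gorcansing}: its hypotheses are all met, so $D_Y(s)$ is Gorenstein with $K_{D_Y(s)}=L|_{D_Y(s)}$ where, comparing with the computation of $K_{\zero(\tilde s)}$, one identifies $L=K_X\otimes(\det E)^5=\cO_X$.

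The only real obstacle, and the point where the hypotheses on dimension and index enter, is the emptiness of $D_{\Sing Y}(s)$: it rests entirely on the inequality $\dim X<\codim_{\wedge^3V_6}\overline{Y_1}=10$, which is equivalent to $d\le 4$. The rest is a formal combination of the two propositions with the Bertini-type arguments already established in the paper.
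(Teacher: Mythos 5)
Your proposal is correct and follows essentially the same route the paper intends (the paper leaves the proof implicit, relying on the adjunction computation preceding the statement together with Propositions \ref{propressing} and \ref{Gorcansing}): you verify the crepancy condition for $\cW_1$, use the codimension-$10$ bound on $\overline{Y_1}$ to kill $D_{\Sing Y}(s)$ for $\dim X\le 9$, and transfer triviality of $K_{\zero(\tilde s)}$ to $D_Y(s)$ via the birational (here, finite, hence iso) resolution or directly via Proposition \ref{Gorcansing}. Your dimension count for the incidence variety $D_{\overline{Y_1}}(\cE)$ and the identification of the fibres of $p_{W_1}$ over $Y_2$ are both accurate, so nothing is missing.
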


Our problem in the sequel will therefore mainly be the following:

\medskip\noindent {\sc Problem}. Find projective varieties $X$, of dimension eight or nine, endowed with a 
non-trivial vector bundle $E$ of rank six such that $\wedge^3E$ is globally generated, and 
\begin{equation}
	\label{condwedge3}
	K_X=(\det E)^{-5}.
\end{equation}

\subsection{Constructions}

The assumptions on the variety $X$ and the vector bundle $E$ are somehow restrictive. On the one hand, $\det E$ must be semiample and non-trivial, and therefore $K_X^{-1}$ too. On the other hand, the index of $X$ has to be a multiple of $5$; by the Kobayashi--Ochiai inequality \cite{KO73} (see also \cite{IP99}),
\begin{equation}
	\label{KOin}
	\ind (X) \leq \dim X + 1.
\end{equation}
If $\ind (X) \neq 5$, it has to be $10$ and then $X=\PP^9$. We are not aware
of any suitable rank six vector bundle on $\PP^9$ other than $\cO(2)\oplus 5\cO$ or $2\cO(1)\oplus 4\cO$. We will therefore restrict our search to varieties $X$ with index $5$.  

\medskip
If $K_X=L^{-5}$ for some (non-trivial) globally generated line bundle $L$,
a naive possibility would be to 
consider $E=L\oplus 5\cO_X$. We will rule out this case from our study because of the following:

\begin{proposition}
	\label{lemmaE=5O+O1}
	If $E=L\oplus 5\cO_X$, with $L$ a globally generated line bundle, then the degeneracy locus $D_Y(s)$ arising from 
	a general section $s$ of $\wedge^3 E$ is the zero locus of a general section of $5L$.
\end{proposition}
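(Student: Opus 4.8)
The plan is to unwind the definition of $D_Y(s)$ in the very special case $E=L\oplus 5\cO_X$ and show that the condition ``$s(x)$ is partially decomposable'' translates into the vanishing of a section of $5L$. First I would decompose $\wedge^3 E$ using the splitting $E=L\oplus 5\cO_X$: writing $F=5\cO_X$, we get
\[
\wedge^3 E \;\cong\; \wedge^3 F \;\oplus\; \bigl(L\otimes \wedge^2 F\bigr),
\]
since $\wedge^i L=0$ for $i\ge 2$. Thus a section $s$ of $\wedge^3E$ is a pair $(\sigma,\lambda)$ with $\sigma\in H^0(X,\wedge^3 F)=\wedge^3\CC^5\otimes H^0(\cO_X)$ a \emph{constant} three-form on the fixed $5$-dimensional space, and $\lambda\in H^0(X, L\otimes\wedge^2 F)=\wedge^2\CC^5\otimes H^0(X,L)$.

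Next I would identify, at a point $x\in X$, when the vector $s(x)=(\sigma,\lambda(x))\in \wedge^3\CC^5 \oplus (L_x\otimes\wedge^2\CC^5)$ lies in the cone $Y=\overline{Y_2}$ of partially decomposable forms in $\wedge^3 E_x \cong \wedge^3(L_x\oplus\CC^5)$. The key elementary fact is the characterization of $\overline{Y_2}$: a three-form $\omega$ in six variables is partially decomposable, i.e.\ $\omega=v\wedge\eta$ for some vector $v$, if and only if the contraction map $\omega\lrcorner: V_6^* \to \wedge^2 V_6$ has a kernel, equivalently if and only if $\omega\wedge\omega\in\wedge^6 V_6$ vanishes when we also check... more precisely, the cleanest invariant description is: $\omega\in\overline{Y_2}$ iff there exists $0\ne v\in V_6$ with $v\wedge\omega=0$ in $\wedge^4 V_6$. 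Applying this with $V_6=L_x\oplus\CC^5$ and a general $s$: since $\sigma$ is a \emph{general fixed} element of $\wedge^3\CC^5$, the form $\sigma$ alone (as a form in the $5$-dimensional $\CC^5$) is \emph{not} partially decomposable there, so the annihilating vector $v$ must have a nonzero component along $L_x$; after rescaling we may take $v = \ell + u$ with $\ell$ a generator of $L_x$ and $u\in\CC^5$. I would then write out $v\wedge s(x)=0$ componentwise in $\wedge^4(L_x\oplus\CC^5) = \wedge^4\CC^5 \oplus (L_x\otimes\wedge^3\CC^5)$: the $L_x\otimes\wedge^3$-component gives $\ell\wedge\sigma + u\wedge\bigl(\lambda(x)/\ell\bigr)$-type terms and forces $u$ to be determined (uniquely, by genericity of $\sigma$, since $\sigma\lrcorner:\CC^{5*}\to\wedge^2\CC^5$ is injective for general $\sigma$) in terms of $\lambda(x)$; substituting this unique $u=u(\lambda(x))$ into the remaining $\wedge^4\CC^5$-component yields a single equation, which is quadratic in the entries of $\lambda(x)$ with values in $L_x^{\otimes 2}$...

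Here I should be more careful about the power: the condition ``$\exists v$ with $v\wedge s(x)=0$'' with $v$ affine-normalized ($\ell$-component $=1$) produces, after eliminating the five coordinates of $u$, a system whose resultant/compatibility condition is one scalar equation. I expect its natural home to be $H^0(X,5L)$, matching the numerics: indeed $K_X=(\det E)^{-5}=L^{-5}$, and the proposition in the excerpt says $D_Y(s)$ has trivial canonical bundle of dimension $d=\dim X-5$, which is exactly what the zero locus $\zero(t)$ of a section $t\in H^0(X,5L)$ has (by adjunction $K_{\zero(t)}=(K_X\otimes 5L)|_{\zero(t)}=\cO$). So the strategy is to make the elimination explicit enough to see that the ``compatibility'' equation is a section of $5L$, not just some power of $L$, and that it is a \emph{general} section (which follows because $\lambda$ ranges over a space surjecting onto what is needed, as $L$ is globally generated and we have $\wedge^2\CC^5=10$ copies of $H^0(X,L)$ at our disposal). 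Concretely, one way to get the exponent transparently: contracting $s(x)$ with $\ell^*\in L_x^*$ is harmless, but the natural invariant is $\Lambda^3(s(x)\lrcorner):\wedge^3 V_6^* \to \wedge^3 V_6$ composed appropriately — the ``half-Pfaffian'' of the $6\times 6$ skew pairing $\omega\lrcorner:V_6^*\to\wedge^2V_6\cong (V_6^*\otimes\det V_6)^{\oplus?}$; rather than that, the cleanest computation is to note $\overline{Y_2}$ is cut out (set-theoretically, and the scheme structure will be handled by genericity via Bertini as in Proposition~\ref{propressing}) by the condition that the linear map $\wedge^2 s(x): \wedge^2 V_6^* \to \wedge^4 V_6$, or better the ``$\omega$-contraction'' $V_6^*\to\wedge^2 V_6$, drops rank — and its rank-drop locus, intersected with the affine chart $L_x\ne 0$ and using that the $\CC^5$-part is generically nondegenerate, is governed by a single degree-? equation; tracking the $L$-twists through this determinantal condition gives $5L$.

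\textbf{Main obstacle.} The hard part will be the explicit elimination of the auxiliary vector $v$ (equivalently $u\in\CC^5$) and bookkeeping the twist by $L$ so that the resulting equation is provably a section of $5L$ rather than $L^{\otimes m}$ for some other small $m$; getting the exponent right is the whole content of the proposition, and it is precisely the exponent that makes $D_Y(s)$ Calabi--Yau match the zero locus of a section of $5L$. I would pin it down by a direct affine-coordinate computation: choose a basis $e_0$ of $L_x$ and $e_1,\dots,e_5$ of $\CC^5$, write $s(x)=\sigma + e_0\wedge\mu$ with $\mu\in\wedge^2\CC^5$ (so $\mu = \lambda(x)/e_0$ carries one power of $L_x^{-1}$, i.e.\ $\lambda(x)\in L_x\otimes\wedge^2\CC^5$), set $v=e_0+u$, impose $v\wedge s(x)=0$, solve the linear-in-$u$ system coming from the $e_0\wedge(\wedge^3\CC^5)$-graded piece (uniquely, by general $\sigma$), and read off the leftover $\wedge^4\CC^5$-piece as a polynomial in $\mu$; clearing denominators to return to $\lambda(x)$ introduces the power of $L_x$, and I expect to land on degree-matching $L^{\otimes 5}$. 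The surjectivity needed for ``general section'' is then automatic from $\wedge^3 E$ being globally generated. I would also remark that this computation explains \emph{why} the case $E=L\oplus 5\cO_X$ is uninteresting and should be excluded: $D_Y(s)$ reduces to a hypersurface-type construction $\zero(\cdot)\subset X$ for a line bundle, i.e.\ it produces nothing genuinely new beyond a complete intersection.
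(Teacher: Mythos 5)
There is a genuine gap, and it starts with a misreading of the statement: in this paper's notation $5L$ means $L^{\oplus 5}$ (just as $5\cO_X$ means $\cO_X^{\oplus 5}$), so the claim is that $D_Y(s)$ is the \emph{common} zero locus of five general sections of $L$ --- a codimension-five locus, consistent with $\overline{Y_2}$ having codimension five in $\wedge^3\CC^6$ and with your own remark that $\dim D_Y(s)=\dim X-5$. Your endgame, by contrast, is to eliminate the auxiliary vector and produce a \emph{single} compatibility equation living in $L^{\otimes 5}$, i.e.\ a hypersurface; that is the wrong target (note your adjunction check is internally inconsistent: the zero locus of one section of a line bundle has dimension $\dim X-1$, not $\dim X-5$). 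In fact no resultant-type elimination is needed and nothing quadratic in $\lambda(x)$ appears: the conditions cutting out $D_Y(s)$ are \emph{linear} in the variable part of the section.

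The second problem is the key structural claim you use to set up the elimination: you assert that a general $\sigma\in\wedge^3\CC^5$ is \emph{not} partially decomposable inside $\CC^5$, and hence that the annihilating vector must have a nonzero $L_x$-component, normalized to $v=\ell+u$. The truth is the opposite, and it is exactly the fact the paper exploits: every three-form in five variables is partially decomposable, since viewed (dually) as a skew two-form on a five-dimensional space it has a kernel; so $\sigma=v_0\wedge\omega_0$ with $v_0$ spanning that kernel. Writing $s(x)=\sigma+\omega\otimes\lambda$ with $\lambda$ a local generator of $L_x$ and imposing $s(x)=(v+c\lambda)\wedge(\phi+\theta\otimes\lambda)$, the $\wedge^3 V_5$-component forces $v=tv_0$ with $t\neq 0$ (the decomposing vector has a \emph{forced} nonzero $V_5$-part and a \emph{free} $L_x$-part --- the reverse of your normalization), and then the $\wedge^2V_5\otimes L_x$-component is solvable precisely when $\omega$ lies in the five-dimensional subspace $U=\langle\omega_0\rangle+v_0\wedge V_5\subset\wedge^2V_5$. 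Hence $D_Y(s)$ is the zero locus of the section of $(\wedge^2V_5/U)\otimes L\cong L^{\oplus 5}$ induced by the variable part $s'\in\wedge^2V_5\otimes \HHH^0(X,L)$, and this section is general because $L$ is globally generated and $s'$ is general. Your false premise about $\sigma$ derails the case analysis (your normalized condition also misses the solutions with $c=0$), and even if the elimination were carried out it could not land on a single equation of weight $L^{\otimes 5}$, because the locus being described is not a divisor.
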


\begin{proof}
	Let us write $E=V_5\otimes \cO_X\oplus L$, for a five dimensional vector space $V_5$. Then
	\[
	\wedge^3 E=\wedge^3 V_5\otimes \cO_X\oplus \wedge^2V_5\otimes L,
	\]
	so that a section $s\in \HHH^0(X, \wedge^3 E)$ can be decomposed as $s=\sigma+s'$, where $\sigma \in \wedge^3 V_5$ and $s'\in  \wedge^2 V_5\otimes \HHH^0(X, L)$. In general $\sigma$, considered as a 
	two-form by the isomorphism $\wedge^3 V_5\simeq \wedge^2 V_5^*$, will have rank four; dually, this exactly means that it can be decomposed as $\sigma=v_0\wedge\omega_0$, where 
	$v_0\in V_5$ and $\omega_0\in \wedge^2 V_5$. The vector $v_0$ generates the kernel of $\sigma$, in particular it is uniquely 
	defined up to scalar. The two-form $\omega_0$ is unique up to a wedge product of $v_0$ by another vector. 
	
	At a point $z\in X$, let $\lambda$ be a generator of the fiber $L_{z}$; then
	\[
	s(z)=\sigma+s'(z)=\sigma+\omega\otimes \lambda
	\]
	where $\omega\in \wedge^2 V_5$. It is partially decomposable if we can factor it out as $s(z)=(v+c\lambda) \wedge (\phi+\theta\otimes\lambda)$, where $v, \theta \in V_5$, $c\in \mathbb{C}$, $\phi\in \wedge^2 V_5$. This is equivalent 
	to the two identities
	\[
	\sigma=v\wedge \phi, \qquad \omega=v\wedge \theta-c\phi.
	\]
	The first equation implies that $v=tv_0$ for some $t\ne 0$, and $\phi=t^{-1}\omega_0+v_0\wedge w$ for some 
	$w\in V_5$. The second equation can then be solved if and only if $\omega$ belongs to the codimension five subspace $U$ of 
	$\wedge^2 V_5$ spanned by $\omega_0$ and $v_0\wedge V_5$. We conclude that our degeneracy locus $D_Y(s)$ can be  
	defined by the condition that the section of $( \wedge^2 V_5 / U)\otimes L$ induced by $s'$ vanishes, and our
	claim follows. 
	\qedhere
	
\end{proof}
Our problem can therefore be approached as follows:
\begin{enumerate}
	\item Find Fano varieties $X$ of dimension eight or nine and index $5$, that is $K_X=L^{-5}$ for some 
	ample line bundle $L$. 
	\item Find vector bundles $E$ of rank six on those $X$, not of the form $L\oplus 5\cO_X$, such that $\det E=L$. 
	Moreover $\wedge^3E$ must be generated by global sections. 
\end{enumerate}

Fano varieties of dimension eight and index five are close to {\it Mukai varieties}, which are
Fano varieties of dimension $n$ and index $n-2$. Mukai varieties are (almost) classified in \cite{Mu89} (see also \cite{CLM98}). Roughly speaking, they consist in: 1) complete intersections; 2)
branched covers; 3) sections of rational homogeneous varieties;  4) blow-ups; 5) projective bundles, including 
products. 

This suggests that we look for varieties of similar types. For types 1), 2) and 4), unfortunately we do not have suitable vector bundles, so we will restrict our study to two types of varieties: subvarieties of homogeneous 
spaces, and projective or Grassmannian fibrations. The possibility of constructing Calabi--Yau varieties in homogeneous spaces has already been considered, e.g.\ by H\"ubsch \cite{Hubsch92}. Let us briefly discuss the latter type. 

\subsubsection{Grassmann bundles}
\label{grahomvar}

Consider  a Grassmann bundle $\pi:X=\Grb(k,F)\to Z$, where $F$ is a bundle on $Z$. In this situation, if ${\cal U}_{X/Z}$ denotes the tautological subbundle of rank $k$ on $\Grb(k,F)$, we have:
\[
K_X=\det({\cal U}_{X/Z})^{\rank(F)}\otimes \pi^*(K_Z\otimes \det(F^*)^k).
\]

As we want $K_X$ to be divisible by $5$, we have to impose some conditions on $Z$ and $F$. For example, we can ask for the following two properties:
\begin{equation}
	\label{condgrassbundle1}
	K_Z\otimes \det(F^*)^k={\cal O}_Z, \qquad \rank(F)=5
\end{equation}
This implies that $1\leq k \leq 4$.
\begin{itemize}[leftmargin=2.5ex]
	\item $k=1$. 
	But then there is no obvious choice for $E$, apart from $E={\cal U}_{X/Z}^*\oplus 5{\cal O}_X$
	that we have excluded.
	
	\item $k=2$. The variety $Z$ has dimension $2$ or $3$, i.e.\ it is a del Pezzo surface or a Fano threefold. Moreover, \eqref{condgrassbundle1} implies that the index of $Z$ is divisible by $2$. The only del Pezzo surface with this property is $\PP^1\times \PP^1$. If $Z$ has dimension three, it must be a del Pezzo threefold (recall that del Pezzo manifolds are Fano manifolds of dimension $n$ whose index is divisible by $n-1$; they were classified by Fujita, see \cite{IP99} and references therein). 
	A natural choice for $E$ is $E={\cal U}_{X/Z}^*\oplus 4{\cal O}_X$.
	
	\item $k=3$.  The variety $Z$ has dimension $2$ or $3$ as before, but now the index of $Z$ is divisible by $3$. As for del Pezzo surfaces, the only possibility is $\PP^2$. If $\dim(Z)=3$, $Z$ must be a quadric in $\PP^4$. A natural choice for $E$ is $E={\cal U}_{X/Z}^*\oplus 3{\cal O}_X$.
	
	\item $k=4$. The variety $Z$ has dimension $4$ or $5$, and the index of $Z$ must be divisible by $4$. If $Z$ has dimension $4$, it must be a quadric in $\PP^5$. If $\dim(Z)=5$, it must be a del Pezzo fivefold.
	A natural choice for $E$ is $E={\cal U}_{X/Z}^*\oplus 2{\cal O}_X$.
\end{itemize}

We can also replace conditions \eqref{condgrassbundle1} by
\begin{equation}
	\label{condgrassbundle3}
	K_Z\otimes \det(F^*)^k=L^5, \qquad \rank(F)=5,
\end{equation}
for some line bundle $L$ on $Z$. Then we need to choose $E$ such that $\det(E)=\det({\cal U}^*_{X/Z})\otimes \pi^*L^*$. For example we can consider $k=1$ and 
$Z=\PP^5$; we can then set $F=\cO_{\PP^5}(-1)\oplus 4\cO_{\PP^5}$ or $F=\cQ_{\PP^5}^*$, and $L=\cO_{\PP^5}(-1)$, for which 
$E={\cal U}^*_{X/Z}\oplus \pi^* L^*\oplus 4\cO_X$ produces a $Y$-degeneracy locus with trivial canonical bundle inside $\Grb(k,F)$.

We also notice that if $F$ is a trivial bundle we get products of the form $Z\times \Gr(k,5)$. As we want $E$ to depend on $Z$ to avoid trivial cases, we suppose that $Z$ is Fano, and therefore of index $5$. Then, if $k=1$, we obtain $\PP^4 \times \PP^4$ with different possibilities for $E$ (see Section \ref{subsec:threefolds}), or $\mathbb{Q}^5 \times \PP^4$, where $\mathbb{Q}^n$ is the quadric of dimension $n$ (see Section \ref{fourfoldssect}). The 
case $k=2$, is excluded by the Kobayashi--Ochiai inequality \eqref{KOin}.

\medskip
Many other choices are of course possible. Instead of considering $X=\Grb(k,F)$, we could take $X$ as the zero locus of a section of a vector bundle on a suitable Grasmann bundle. A systematic study of these cases, however, falls outside the scope of this paper.

\subsubsection{Twisted degeneracy loci}
\label{twisteddeg}

Consider a vector bundle $E$ of rank $6$ and a line bundle $L$ on $X$. Then, taking a section 
$s\in \wedge^3E \otimes L$, one can consider the \emph{twisted} $Y$-degeneracy locus $D_Y(s)\subset X$ consisting of points $x\in X$ such that $s(x)$ is in the \emph{twisted} fibration 
$\cE_Y \otimes L\subset \wedge^3 E\otimes L$. 
In this new situation, the canonical bundle of the resolution 
$\zero(\tilde{s})\subset \PP(E)$ becomes
\begin{multline*}
	\left(\theta^*(K_X)\otimes K_{\PP(E)/X} \otimes \det(\wedge^3 E \otimes L) \otimes \det(\wedge^2\cQ_{\PP(E)}(-1) \otimes L)^*\right)|_{\zero(\tilde{s})} = \\
	= (\theta^*(K_X \otimes \det(E)^{5} \otimes L^{10}))|_{\zero(\tilde{s})};
\end{multline*}
hence, ${\zero(\tilde{s})}$ has trivial canonical bundle if
\begin{equation}
	\label{eqtwisteddeg}
	K_X \otimes \det(E)^{5} \otimes L^{10} = \cO_X.
\end{equation}

It is easy to see that this condition is coherent with condition \eqref{condwedge3} when $L=L'^{3}$, which implies 
$\wedge^3 E \otimes L=\wedge^3 (E\otimes L')$. 
As we require that the bundle 
$\wedge^3 E \otimes L$ is globally generated, we have a restriction on the choice of $L$ and $E$. On the one hand we can choose the two of them to be globally generated; say 
$L={\cal O}(1)$, and $\det(E)={\cal O}(1)$, with ${\cal O}(1)$ ample and primitive. Then condition \eqref{eqtwisteddeg} becomes $K_X={\cal O}(-15)$ and the Kobayashi--Ochiai inequality implies that $\dim(X)\geq 14 >9$. On the other hand, let us assume $L={\cal O}(-1)$; then, for example, we can choose 
$E=4{\cal O}(1) \oplus 2{\cal O}_X$ and in this way 
\[
\wedge^3 E \otimes L=(4{\cal O}(3)\oplus 12{\cal O}(2)\oplus 4{\cal O}(1))\otimes {\cal O}(-1)=4{\cal O}(2)\oplus 12{\cal O}(1)\oplus 4{\cal O}_X
\]
is globally generated. Condition (\ref{eqtwisteddeg}) becomes $K_X={\cal O}(-10)$. 
The Kobayashi--Ochiai inequality implies again that $\dim(X)=9$, and $X=\PP^9$.

\subsubsection{Simple connectedness}
One natural question that arises when constructing (Calabi--Yau) varieties is whether they are simply connected. We are able to prove the simple connectedness of our orbital degeneracy loci in the case of partially decomposable forms when the base variety $X$ is a complete intersection.
\begin{proposition}
	\label{propsimpleconn}
	Let $X$ be a variety of dimension at least seven which is the zero locus of a general section of an ample line bundle $L$ over a variety $X'$. Suppose that there exists a vector bundle $E$ on $X'$ of rank six, such that $\wedge^3 E$ is globally generated, and $K_X=\det(E|_X)^{-5}$.  Consider the degeneracy locus $D_Y(s)\subset X$, where $s$ is a general section of $\wedge^3 E|_X$. 
	Then the desingularization $\zero(\tilde{s})$ of $D_Y(s)$ inside $\PP(E|_X)$
	is simply connected.
\end{proposition}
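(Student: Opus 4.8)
The plan is to use the Lefschetz hyperplane theorem together with the fact that $\zero(\tilde{s})$ is a smooth zero locus of a section of a globally generated vector bundle inside a projective bundle, and then descend the fundamental group to $X$ and finally to a point via the chain of ambient varieties.

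First I would work on the ambient space $\PP(E|_X)$. Since $X$ is cut out of $X'$ by a general section of the ample line bundle $L$, and $\dim X \geq 7$, repeated application of the Lefschetz hyperplane theorem (in the Goresky--MacPherson/Sommese form for sections of ample bundles, valid since here we have a single ample line bundle) gives that the inclusion $X \hookrightarrow X'$ induces an isomorphism on $\pi_1$ as soon as $\dim X \geq 2$; in particular $\pi_1(X) \cong \pi_1(X')$. More importantly, I only need $\pi_1$ of the ambient projective bundle, and since $\PP(E|_X) \to X$ is a Zariski-locally trivial $\PP^5$-bundle, its fundamental group equals $\pi_1(X)$. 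So it suffices to understand how passing from $\PP(E|_X)$ down to $\zero(\tilde{s})$ affects $\pi_1$, and then to know that $\pi_1(X)$ is trivial --- or rather, to run the argument in a way that produces triviality directly.

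The key step is the following: $\zero(\tilde{s})$ is the zero locus of a general section of the globally generated vector bundle $Q_W = \wedge^3 \cQ_{\PP(E)}|_{\PP(E|_X)}$ on $\PP(E|_X)$, of rank $20$. For the fundamental group one wants a Lefschetz-type statement for zero loci of globally generated (not necessarily ample) bundles. The clean route is to use the theorem of Sommese (see \cite{Sommese76}, or the account in \cite{LazarsfeldPositivity}): if $\cF$ is an ample vector bundle of rank $c$ on a smooth projective variety $M$ and $Z$ is the zero locus of a general section, then $\pi_1(Z) \to \pi_1(M)$ is an isomorphism provided $\dim Z = \dim M - c \geq 2$. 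Here $Q_W$ is only globally generated, but one can tensor by a small ample line bundle: instead of working directly, replace $E$ by $E \otimes \cO_X(1)$-type twists, or more simply invoke the stronger result (Debarre, and Ottem \cite{Ottem}) that for $k$-ample or merely globally generated bundles the same conclusion on $\pi_1$ holds once the zero locus has dimension at least $2$, because globally generated implies nef and the relevant positivity in the normal bundle is enough for the Lefschetz argument on $\pi_1$ (which, unlike higher homotopy, is insensitive to the difference between ample and nef in many formulations --- more carefully, one realizes $\zero(\tilde s)$ as a complete intersection--like object inside the total space of an auxiliary bundle and applies the affine Lefschetz theorem). Using $\dim \zero(\tilde{s}) = \dim X - 5 \geq 2$, this yields $\pi_1(\zero(\tilde{s})) \cong \pi_1(\PP(E|_X)) \cong \pi_1(X)$.

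Finally I would close the loop on $\pi_1(X)$ itself. Since $X$ is a general section of the ample line bundle $L$ on $X'$, Lefschetz gives $\pi_1(X) \cong \pi_1(X')$, so it would remain to argue $X'$ is simply connected; but in all the relevant instances $X'$ is a product of Grassmann bundles over rational homogeneous spaces, hence rationally connected, hence simply connected. To keep the proposition self-contained one should either add the hypothesis that $X'$ is simply connected (or rationally connected), or note that in the stated applications $X'$ is a homogeneous-type variety. The main obstacle I anticipate is precisely the Lefschetz statement for the zero locus of the merely globally generated bundle $Q_W$: ampleness fails in general (for instance $\wedge^3 \cQ_{\PP(E)}$ need not be ample on the $\PP^5$-fibers), so one must be careful to use a version of the theorem valid for globally generated or nef bundles, or to perturb by an ample twist and track that the fundamental group is unchanged. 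Everything else --- the projective bundle computation and the repeated hyperplane sections --- is routine.
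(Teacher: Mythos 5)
There are two genuine gaps here, and they are exactly the points where the paper's proof takes a different route. First, your key Lefschetz step — passing from $\PP(E|_X)$ to $\zero(\tilde{s})$ — relies on a comparison theorem for zero loci of a \emph{merely globally generated} bundle ($Q_W=\wedge^3\cQ_{\PP(E)}$, which has rank $10$, not $20$, and is indeed not ample on the $\PP^5$-fibers). No such theorem holds: for a globally generated bundle the general zero locus need not even be connected (e.g.\ a degree-two globally generated line bundle pulled back from an elliptic curve factor), so "globally generated implies the $\pi_1$-statement once $\dim Z\ge 2$" is false, and twisting $Q_W$ by an ample line bundle changes the zero locus to something unrelated, so the perturbation does not "track" $\pi_1$. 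The correct tool is Okonek's theorem for $k$-ample bundles, which requires you to actually bound the $k$-ampleness of the bundle you are cutting with; you never do this for $Q_W$, and it is not clear how to. The paper sidesteps this by cutting with a single line bundle instead: it extends $s$ to a general section $t$ of $\wedge^3E$ on $X'$, realizes $\zero(\tilde{s})$ as the zero locus of $\theta^*(L)$ inside the resolution $\zero(\tilde{t})\subset\PP(E)$ of $D_Y(t)\subset X'$, and computes the $k$-ampleness of $\theta^*(L)|_{\zero(\tilde{t})}$ directly from the fibers of $\theta'$ over the singular strata ($k=0,2,5$ according to $\dim X'$), which is where the hypothesis $\dim X\ge 7$ is used to guarantee $\dim\zero(\tilde{s})-k\ge 2$.

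Second, your argument only reduces $\pi_1(\zero(\tilde{s}))$ to $\pi_1(X)\cong\pi_1(X')$ and then needs $X'$ to be simply connected — a hypothesis the proposition does not make, as you yourself concede. So as written you prove a weaker statement. The paper gets triviality of the ambient fundamental group for free, without any assumption on $\pi_1(X')$: since $K_{\zero(\tilde{t})}$ is the pullback of $K_{X'}\otimes\det(E)^5\cong L^{-1}$, the variety $\zero(\tilde{t})$ is almost Fano ($-K$ nef and big), hence simply connected by Takayama; Okonek's theorem kills $\pi_1$ and $\pi_2$ of the pair $(\zero(\tilde{t}),\zero(\tilde{s}))$, and the long exact homotopy sequence then gives $\pi_1(\zero(\tilde{s}))=1$ (a deformation argument handles the passage from sections restricted from $X'$ to general sections on $X$). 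To repair your proof you would need both a verified $k$-ampleness statement for $Q_W$ on $\PP(E|_X)$ and an independent reason why $X$ (or $X'$) is simply connected; the paper's restructuring avoids both demands.
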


\begin{proof}
	We will prove the simple connectedness of $\zero(\tilde{s})$ when the section $s$ is the restriction of a general section $t$ of $\wedge^3 E$ over $X'$. Then, a deformation argument implies our assertion.   
	
	The idea of the proof is to use some generalizations of the Lefschetz hyperplane theorem to prove the vanishing of  relative homotopy groups (see for example \cite{SV86}). In particular, we want to apply \cite[Corollary 22]{Ok87}, which states that if $Z$ is the zero locus of a section of a globally generated $k$-ample vector bundle over $Z'$, then the relative homotopy groups $\pi_i(Z',Z)$ are trivial for $i\leq \dim(Z) - k$. Let us  recall the definition of $k$-ampleness (first introduced by Sommese in \cite{So78}): a line bundle $L$ on $Z'$ is $k$-ample if $L^r$ is globally generated for some $r>0$, and the fibers of the corresponding morphism $\phi: Z'\to \HHH^0(Z',L^r)$ have dimension at most $k$.
	
	In our situation, even though $L$ is ample (i.e.\ $0$-ample) over $X'$, this variety is non-necessarily simply connected. We will rather consider our orbital degeneracy locus as a subvariety of  another degeneracy locus, that will be (almost) Fano and therefore simply connected. Moreover, the fact that in higher dimensions degeneracy loci are singular will force us to work on their desingularizations.
	
	Denote by $D_Y(t)\subset X'$ the degeneracy locus associated to the section $t \in \HHH^0(X',\wedge^3 E)$, and suppose $s=t|_X$. As $X\subset X'$ is the zero locus of a section of $L$, $D_Y(s)\subset D_Y(t)$ 
	is as well the zero locus of a section of $L|_{D_Y(t)}$.
	Similarly, when we pass to the respective desingularizations, we have that $\zero(\tilde{s})\subset \zero (\tilde{t})$ 
	is the zero locus of a section of $\theta^*(L)|_{\zero(\tilde{t})}$. The following diagram illustrates this situation:
	\begin{equation*}
		\xymatrix @C=2pc @R=0.4pc{
			\zero(\tilde{s}) \ar@{^{(}->}[r] \ar[dd] & \rule{1pt}{0pt} \zero(\tilde{t}) \ar@{^{(}->}[r] \ar[dd] & \rule{2pt}{0pt} \PP(E) \ar[dd]^-{\theta}\\
			& & \\
			D_Y(s) \ar@{^{(}->}[r]  & \rule{1pt}{0pt}D_Y(t) \ar@{^{(}->}[r] & \rule{2pt}{0pt} X'
		}
	\end{equation*}
	
	In order to apply Okonek's result, we have to verify that $\theta^*(L)|_{\zero(\tilde{t})}$ is $k$-ample for a suitable $k$. The value of $k$ will depend on the dimension of the fibers of $\theta$, i.e.\ on the dimension of $X'$. 
	\begin{itemize}
		\item If $\dim(X')\leq 9$, then $\zero(\tilde{t})\cong D_Y(t)$ and $\theta^*(L)|_{\zero(\tilde{t})}$ is ample.
		
		\item If $10\leq \dim(X')\leq 19$, the singular locus of $D_Y(t)$ is supported in codimension $5$. Moreover, the preimage of $\Sing(D_Y(t))$ 
		inside $\zero(\tilde{t})$ is a $\PP^2$-bundle over it. This comes from the fact that $\Sing(Y)$ is the space of totally decomposable forms $W=\omega_1 \wedge \omega_2 \wedge \omega_3$; 
		the resolution of $Y$ has fiber over $W$ canonically isomorphic to $\PP(W)$, where $W$ is seen as a vector space of dimension $3$. 
		Therefore, the bundle $\theta^*(L)|_{\zero(\tilde{t})}$ 
		is $2$-ample in this case. 
		
		\item If $\dim(X')\geq 20$, $D_0(t)$ is non-empty in general, and the fiber over it is a $\PP^5$-bundle. In this situation, the bundle $\theta^*(L)|_{\zero(\tilde{t})}$ 
		is $5$-ample.
	\end{itemize}
	As in each case $\dim(\zero(\tilde{s}))-k\geq 2$, by applying Okonek's result we get that the relative homotopy group $\pi_2(\zero(\tilde{t}),\zero(\tilde{s}))$ 
	is trivial. Moreover, $\zero(\tilde{t})$ is an almost Fano variety (\cite{JPR06}), i.e.\ its canonical bundle is big and nef. Almost Fano varieties are simply connected (see \cite{Takayama00}); therefore $\pi_1(\zero(\tilde{t}))$ 
	is trivial. Using the long exact sequence of relative homotopy groups, we deduce that $\pi_1(\zero(\tilde{s}))$ 
	is trivial as well.
\end{proof}

Recall that, for a vector bundle $V$, being $k$-ample means that $\cO_{\PP(V)}(1)$ is $k$-ample. Therefore, the same proof 
remains valid if we replace $L$ by an ample vector bundle $V$, provided that $\dim(\zero(\tilde{s}))\geq 2+k$ whenever $\theta^*(V)|_{\zero(\tilde{t})}$ is $k$-ample.

\subsection{Explicit examples}

\subsubsection{Threefolds}
\label{subsec:threefolds}
We collect here examples of threefolds with trivial canonical bundle which can be constructed as orbital degeneracy loci $D_{Y}(s)$, where $s \in \HHH^0(X, \wedge^3 E)$ is a general section of the globally generated vector bundle $\wedge^3 E$, $E$  a rank 6 vector bundle on a projective variety $X$. 
As in the whole section, $Y$ is the orbit closure of partially decomposable forms in $\wedge^3 
\mathbb{C}^6$.

The relevant varieties $X$ are homogeneous spaces or linear sections of homogeneous spaces. We present them, 
as well as the other varieties that we will meet later on, as zero loci of general sections of a homogeneous vector 
bundle $V$ on a homogeneous variety $X'$. Moreover the bundle $E$ on $X$ will be the restriction of a 
homogeneous bundle $E'$ on $X'$. 

The non-vanishing of the top Chern class of $\wedge^3 {\cal Q}_{\PP(E)}$, which we checked using \Mac\ \cite{M2}, ensures that the constructed degeneracy loci are non-empty.

Using the Koszul complex and the conormal sequence we recover the cohomology groups on $\zero(\tilde{s})$ from those on $\PP(E)$ and, since $\zero(\tilde{s}) \simeq D_Y(s)$, we obtain $\HHH^{p,q}(\zero(\tilde{s})) = \HHH^{p,q}(D_Y(s))$, for $0 \leq p,q \leq 3$. In particular, for all cases we have $\hhh^{1,0}(D_Y(s)) = \hhh^{2,0}(D_Y(s))=0$, hence they are (possibly non-simply connected) Calabi--Yau varieties. In Appendix \ref{appendix} we explain more in detail the method used to compute the Hodge diamonds. 
We list the aforementioned examples of Calabi--Yau threefolds with their Hodge numbers in Table \ref{tab:3folds}.

\newcounter{threefolds}
\newcommand{\three}{\arabic{threefolds}\stepcounter{threefolds}}
\stepcounter{threefolds}
\begin{table}[h!bt]
	\label{tab:cy3folds}
	\begin{center}
		\caption{Some examples of Calabi--Yau $3$-folds}
		\label{tab:3folds}
		\begin{tabular}{cccccc} \toprule
			\phantom{n} & $X'$ & $V$ & $E'$ & $\hhh^{1,1}$ & $\hhh^{2,1}$ 
			\\	\midrule
			$\begin{array}{l}
			\mbox{(t.\three)}  \\
			\mbox{(t.\three)}  
			\end{array}$ & $\Gr(2,7)$ & $2\cO(1)$ & 
			$\begin{array}{c}
			\cU^* \oplus 4\cO \\
			\cQ \oplus \cO
			\end{array}$ &
			$\begin{array}{c}
			2 \\
			3/4?
			\end{array}$ &
			$\begin{array}{c}
			49 \\
			36/37?
			\end{array}$
			\\ \midrule
			$\begin{array}{l}
			\mbox{(t.\three)} 
			\end{array}$ & $\Gr(3,6)$ & $\cO(1)$ & 
			$\begin{array}{c}
			\cU^* \oplus 3\cO \\
			\end{array}$&
			$\begin{array}{c}
			2 
			\end{array}$ &
			$\begin{array}{c}
			38 
			\end{array}$
			\\ \midrule
			$\begin{array}{l}
			\mbox{(t.\three)} \\
			\mbox{(t.\three)} 
			\end{array}$ & $\PP^4 \times \PP^4$ &  & 
			$\begin{array}{c}
			p_1^*\cO(1) \oplus p_2^*\cO(1) \oplus 4\cO \\
			p_1^*\cQ \oplus p_2^*\cO(1) \oplus \cO \\
			\end{array}$&
			$\begin{array}{c}
			3 \\
			4
			\end{array}$ &
			$\begin{array}{c}
			48 \\
			32 
			\end{array}$
			
			\\ \bottomrule
		\end{tabular}
	\end{center}
\end{table}

Notice that, by Proposition \ref{propsimpleconn}, the threefolds (t.1), (t.2), (t.3) are simply connected; for cases (t.4), (t.5) the same proposition cannot be applied.

For the threefold (t.2) the ambiguity in the Hodge numbers cannot be resolved by our method since we could not determine whether one of the coboundary maps of the Koszul complexes has maximal rank. The same happens in example (t.3), where the Picard number can be $1$ or $2$. However, in this case we verify that the two line bundles $L_1=\cO_{\PP(E)}(1)$ and $L_2=\theta^*\cO_X(1)$, where $\theta: \PP(E) \ra X$, are non-trivial and independent by comparing their intersection numbers.

The Hodge numbers in Table \ref{tab:3folds} were previously found e.g.\ in \cite{GHL89}, \cite{KS00}, \cite{KKRS05}, \cite{BK10} as pertaining to complete intersections in toric ambient varieties.
It would be interesting to investigate whether there exists a relation between these examples and ours.

\subsubsection{Fourfolds}
\label{fourfoldssect}

With the same notation as above, we list in Table \ref{tab:4foldsGr} examples of $Y$-degeneracy loci of dimension $4$ with trivial canonical bundle constructed from a pair $(X,E)$, where $X$ is the zero locus of a homogeneous vector bundle $V$ on a classical Grassmannian $X'$. We denoted by $\cT_{+\frac{1}{2}}$, on an orthogonal Grassmannian $\OGr(k,2n)$, one of the two spin bundles of rank $2^{n-k-1}$.

\newcounter{fourfolds}
\newcommand{\four}{\arabic{fourfolds}\stepcounter{fourfolds}}
\stepcounter{fourfolds}
\begin{table}[h!bt]
	\begin{center}
		\caption{Calabi--Yau $4$-folds in classical Grassmannians}
		\label{tab:4foldsGr}
		\begin{tabular}{cccc} \toprule
			\phantom{n} & $X'$ & $V$ & $E'$
			\\ \midrule
			(f.\four) & $\PP^9$ & - & 
			$ 2\cO(1) \oplus 4\cO $
			\\	\midrule
			$\begin{array}{l}
			\mbox{(f.\four)} \\
			\mbox{(f.\four)}  
			\end{array}$ & $\Gr(2,7)$ & $\cO(2)$ & 
			$\begin{array}{c}
			\cU^* \oplus 4\cO \\
			\cQ \oplus \cO
			\end{array}$ 
			\\ \midrule
			$\begin{array}{l}
			\mbox{(f.\four)} \\
			\mbox{(f.\four)}
			\end{array}$ & $\Gr(2,8)$ & 3$\cO(1)$ & 
			$\begin{array}{c}
			\cU^* \oplus 4\cO \\
			\cQ
			\end{array}$ 
			\\ \midrule
			$\begin{array}{l}
			\mbox{(f.\four)} \\
			\mbox{(f.\four)}
			\end{array}$ & $\Gr(2,8)$ & $S^2 \cU^* $ & 
			$\begin{array}{c}
			\cU^* \oplus 4\cO \\
			\cQ
			\end{array}$ 
			\\ \midrule
			$\begin{array}{l}
			\mbox{(f.\four)} \\
			\mbox{(f.\four)}
			\end{array}$ & $\Gr(3,7)$ & $\wedge^2 \cU^* $ & 
			$\begin{array}{c}
			\cU^* \oplus 3\cO \\
			\cQ \oplus 2\cO
			\end{array}$ 
			\\ \midrule
			(f.\four) & $\OGr(2,10)$ & $\cT_{+\frac{1}{2}}(1) $ & 
			$\begin{array}{c}
			\cU^* \oplus 4\cO 
			\end{array}$ 
			\\ \midrule
			(f.\four) & $\OGr(2,12)$ & $\cT_{+\frac{1}{2}}(1) $ & 
			$\begin{array}{c}
			\cU^* \oplus 4\cO 
			\end{array}$ 
			\\ \bottomrule
		\end{tabular}
	\end{center}
\end{table}

The $Y$-degeneracy loci that we obtain from the data $X', V, E'$ of Table \ref{tab:4foldsGr} were 
checked to be non-empty because $\ctop(\wedge^3\cQ_{\PP(E)}) \neq 0$, and Calabi--Yau because the 
Euler characteristic $\chi(D_{Y}(s))=2$. Note that since the dimension is even, this is enough to ensure the 
simple connectedness. 

In cases (f.1--f.9) we  used the package \texttt{Schubert2} implemented in \Mac\ to compute directly the Euler characteristic and the top Chern class of $\wedge^3\cQ_{\PP(E)}$. The same method does not apply for cases (f.10) and (f.11), as orthogonal Grassmannians are not implemented in the package. Instead, we computed directly the dimension of $\HHH^i(\zero(\tilde{s}),\cO_{\zero(\tilde{s})})$ by means of a Koszul complex, as explained in Appendix \ref{appendix}. The same computations show at once the non-emptiness of these loci.

Note that if a triple $(X', V, E')$ satisfies the conditions we require (with the exception of $\PP^9$), then the zero locus of a general
section in $\HHH^0(X',V\oplus 5\cO (1))$ is a fourfold with trivial canonical bundle. Such fourfolds have been
classified in \cite{Benedetti16}, and this classification guarantees that Table \ref{tab:4foldsGr} is complete.

Non-classical generalized Grassmannians may also be considered. For instance, on the Cayley plane $X_{E6.1}$ the zero locus of seven general sections of the positive generator of the Picard group is a Fano variety of dimension nine and index five. Unfortunately, for this case, as well as for the other cases coming from exceptional Lie groups, we could not find any suitable rank six vector bundle $E$. 

\medskip
As discussed in section \ref{grahomvar}, another family of examples is provided by varieties  $X$ defined 
as Grassmann bundles $\Grb(k,F)$, for some vector bundle $F$ on some Fano variety $Z$. In this situation, 
a natural choice for $E$ is $\cU^*_{X/Z}\oplus (6-k)\cO$, where $\cU_{X/Z}$ denotes 
the tautological bundle of $\Grb(k,F)$. As already explained, for every $1\leq k\leq 4$ we know all the possible varieties $Z$ which are suitable to construct degeneracy loci with trivial canonical bundle. The problem is to find suitable bundles on them. Table  \ref{tab:4foldsGrBundle} reports the examples we were able to construct.  
Once again, we did not include in the table the cases in which $E$ decomposes as a line bundle and five copies of the trivial bundle, which happens exactly for $k=1$. 
For Table \ref{tab:4foldsGrBundle} we decided to follow this notation: $Z$ will be the zero locus of a general 
section of a bundle $V$ on a variety $Z'$ (sometimes $Z=Z'$). Moreover $Bl_{pt}\PP^3$ is the blow-up of $\PP^3$ over a point, with exceptional divisor $Exc$.

\begin{table}[h!bt]
	\begin{center}
		\caption{Calabi--Yau $4$-folds in Grassmann bundles}
		\label{tab:4foldsGrBundle}
		\begin{tabular}{ccccc} \toprule
			\phantom{n} & $Z'$ & $V$ & $F$ & $k$ 
			\\ \midrule
			\mbox{(f.\four)} & $\Gr(2,5)$ & $S^2 \cU^*$ & $\cU \oplus 3\cO$ & 2
			\\ \midrule
			$\begin{array}{c}
			\mbox{(f.\four)}\\
			\mbox{(f.\four)}\\
			\mbox{(f.\four)}
			\end{array}$ & $\PP^3$ & - & $\begin{array}{c}
			\wedge^2 \cQ^* \oplus 2\cO\\
			\cO(-1) \oplus \cO(-1)\oplus 4\cO\\
			\cO(-2) \oplus 4\cO
			\end{array}$ & 2
			\\ \midrule
			$\begin{array}{c}
			\mbox{(f.\four)}\\
			\mbox{(f.\four)}
			\end{array}$ & $\PP^4$ & $\cO(3)$ & $\begin{array}{c}
			\cQ^* \oplus \cO\\
			\cO(-1) \oplus 4\cO
			\end{array}$ & 2
			\\ \midrule
			$\begin{array}{c}
			\mbox{(f.\four)}\\
			\mbox{(f.\four)}
			\end{array}$ & $Bl_{pt}\PP^3$ & - & $\begin{array}{c}
			Exc(-1)\oplus \cO(-1)\oplus 3\cO\\
			Exc(-2)\oplus 4\cO
			\end{array}$ & 2
			\\ \midrule
			$\begin{array}{c}
			\mbox{(f.\four)}\\
			\mbox{(f.\four)}\\
			\mbox{(f.\four)}\\
			\mbox{(f.\four)}
			\end{array}$ & $\PP^2\times\PP^2$ & $\cO_1(1)\otimes \cO_2(1)$ & $\begin{array}{c}
			\cQ^*_1\oplus \cQ^*_2\oplus \cO\\
			\cQ^*_1\oplus \cU_2\oplus 2\cO\\
			\cU_1\oplus \cU_2\oplus 3\cO\\
			\cU_1\otimes \cU_2\oplus 4\cO
			\end{array}$ & 2
			\\ \midrule
			$\begin{array}{c}
			\mbox{(f.\four)}\\
			\mbox{(f.\four)}
			\end{array}$ & $\Gr(2,4)$ & $\cO(2)$ & $\begin{array}{c}
			\cU \oplus 3\cO\\
			\cO(-1) \oplus 4\cO
			\end{array}$ & 2
			\\ \midrule
			$\begin{array}{c}
			\mbox{(f.\four)}\\
			\mbox{(f.\four)}\\
			\mbox{(f.\four)}
			\end{array}$ & $\Gr(2,5)$ & $3\cO(1)$ & $\begin{array}{c}
			\cQ^* \oplus 2\cO\\
			\cU \oplus 3\cO\\
			\cO(-1) \oplus 4\cO
			\end{array}$ & 2
			\\ \midrule
			$\begin{array}{c}
			\mbox{(f.\four)}\\
			\mbox{(f.\four)}\\
			\mbox{(f.\four)}
			\end{array}$ & $\PP^1\times \PP^1\times \PP^1$ & - & $\begin{array}{c}
			\cU_1\oplus \cU_2\oplus \cU_3 \oplus 2\cO\\
			(\cU_1\otimes\cU_2)\oplus \cU_3 \oplus 3\cO\\
			(\cU_1\otimes \cU_2\otimes \cU_3) \oplus 4\cO
			\end{array}$ & 2
			\\ \midrule
			$\begin{array}{c}
			\mbox{(f.\four)}\\
			\mbox{(f.\four)}
			\end{array}$ & $\Gr(2,4)$ & $\cO(1)$ & $\begin{array}{c}
			\cU \oplus 3\cO\\
			\cO(-1) \oplus 4\cO
			\end{array}$ & 3
			\\ \midrule
			\mbox{(f.\four)} & $\PP^6$ & $\cO(3)$ & $\cO(-1) \oplus 4\cO$ & 4
			\\ \midrule
			\mbox{(f.\four)} & $\PP^7$ & $2\cO(2)$ & $\cO(-1) \oplus 4\cO$ & 4
			\\ \midrule
			$\begin{array}{c}
			\mbox{(f.\four)}\\
			\mbox{(f.\four)}\\
			\mbox{(f.\four)}
			\end{array}$ & $\Gr(2,5)$ & $\cO(1)$ & $\begin{array}{c}
			\cQ^* \oplus 2\cO\\
			\cU \oplus 3\cO\\
			\cO(-1) \oplus 4\cO
			\end{array}$ & 4
			\\ \bottomrule
		\end{tabular}
	\end{center}
\end{table}

The varieties obtained this way are smooth fourfolds and have trivial canonical bundle. With the package \texttt{Schubert2} we can check that $\wedge^3\cQ_{\PP(E)}$ has non-zero top Chern class and compute the Euler characteristic of the varieties just found: it turns out to be always 2.

\medskip
Besides the examples in Tables \ref{tab:4foldsGr} and \ref{tab:4foldsGrBundle}, many others can be constructed. We might look at different kind of base varieties, or relax some hypotheses we made. Even though a systematic study of these more general cases falls outside the aims of this paper, let us mention here a few sporadic examples.

We can take a more general homogeneous space as $X$, e.g.\ a partial flag variety. Let $X=F(1,5,6)$; we can see it as a codimension 1 complete intersection in $\PP^5 \times \PP^5$ cut out by an equation of bidegree $(1,1)$. Using this description, it is easy to see that for $E$ we can consider the following vector bundles:
\[
\cU_1^* \oplus \cU_2^* \oplus 4 \cO, \qquad \cQ_1 \oplus \cU_2^*.
\]
A computation with \texttt{Schubert2} shows that the corresponding degeneracy loci are non-empty and have characteristic two, hence they are examples of Calabi--Yau fourfolds.

Other fourfolds with trivial canonical bundle can be obtained inside Grassmann bundles over subvarieties of homogeneous varieties, as done above. With the same notation, we can consider a rank 5 vector bundle $F$ over $Z$ such that conditions \eqref{condgrassbundle3} hold. We get the four examples listed in Table \ref{tab:4foldsSporadic}, where $\pi:\Grb(k,F|_{Z}) \rightarrow Z$ is the map associated to the Grassmann bundle.
\begin{table}[h!bt]
	\begin{center}
		\caption{Some other Calabi--Yau $4$-folds in Grassmann bundles}
		\label{tab:4foldsSporadic}
		\begin{tabular}{ccccc} \toprule
			$Z'$ & $G$ & $F$ & $k$ & $E$
			\\ \midrule
			$\PP^5$ & - & $\begin{array}{c}
			\cO(-1) \oplus 4\cO\\
			\cQ^*
			\end{array}$ & 1 & $\cU_{X/Z}^* \oplus \pi^*L^* \oplus 4\cO$
			\\ \midrule
			$\PP^6$ & $\cO(2)$ & $5\cO$ & 1 & $\begin{array}{c}
			\cU_{X/Z}^* \oplus \pi^*L^* \oplus 4\cO
			\\
			\cQ_{X/Z} \oplus \pi^*L^* \oplus \cO
			\end{array}$
			\\ \bottomrule
		\end{tabular}
	\end{center}
\end{table}

The last two examples correspond to degeneracy loci inside $\QQ^5\times \PP^4$, where $\QQ^5$ denotes the five-dimensional quadric. For all four examples, a computation with \texttt{Schubert2} shows that the corresponding degeneracy loci are non-empty Calabi--Yau fourfolds.

A last example which is worth recalling here is the twisted degeneracy locus constructed from $\PP^9$ (and mentioned in Section \ref{twisteddeg}). Again, the Euler characteristic in this case is equal to two.

\section{Nilpotent orbits} 
\label{nilpotOrbits}

In this section we study degeneracy loci associated to Richardson nilpotent orbits. We give a list of orbits which can be used to construct low-dimensional degeneracy loci. These loci will often have singularities in low codimension. Nonetheless, their resolutions of singularities give rise to many examples of threefolds and fourfolds with trivial canonical bundle.

\subsection{A reminder about nilpotent orbits}
Consider any projective homogeneous variety $G/P$ and take
as $\cW$ the cotangent bundle $\Omega^1_{G/P}$. Then condition \eqref{condCrepancy} is obviously verified. 
Note that $\Omega^1_{G/P}$ is the homogeneous vector bundle defined by the $P$-module $(\fg / \fp)^*
\simeq \fp^{\perp}\subset \fg^*$. If we identify $\fg^*$ with $\fg$ using the Killing form, then 
$\fp^{\perp}=\rad(\fp)$, the nilpotent radical of the Lie algebra $\fp$. The image of the map 
$$p_W : \cW\longrightarrow Y\subset \fg$$
is therefore contained in the nilpotent cone, so that $Y=\overline\cO$ is the closure of some nilpotent 
orbit $\cO$. Such orbits are called {\it Richardson orbits}. The main example is of course the maximal 
nilpotent orbit; in this case $P=B$ is a Borel subgroup, $Y$ is the nilpotent cone, and $p_W$ is the 
famous Springer resolution.  Nevertheless, $p_W$ is not necessarily birational in general; 
Fu proved in \cite{Fu03} that this is the case exactly when  $\overline\cO$ admits a symplectic resolution (moreover
this resolution must be some $p_W$). 

Finally, a general useful fact about nilpotent orbit closures is that the 
singular locus of   $Y=\overline\cO$ always coincides with its boundary $\overline\cO - \cO$.

\subsection{Associated degeneracy loci}
\label{assdegloc}
In the relative setting, we start from a $G$-principal bundle $\cE$ over some variety $X$, and we denote by $\ad_\cE$ the vector 
bundle $\cE_{\fg}$ on $X$ associated to the adjoint representation of $G$. Let $\cO$ be a Richardson nilpotent orbit in $\fg$, corresponding to a parabolic subgroup $P$ of $G$. As done in Section \ref{twisteddeg}, we can consider twisted degeneracy loci: let $L$ be a line bundle on $X$ such that  $\ad_\cE\otimes L$ is generated by global sections. For $s$ such a global section, 
the $\overline\cO$-degeneracy locus is 
$$D_{\overline\cO}(s)=\{x\in X, \; s(x)\in \cE_{\overline\cO}\otimes L\}.$$
If the collapsing of $\Omega^1_{G/P}$ is birational and $s$ is general, this locus will be desingularized by $\zero(\tilde{s})$, $\tilde{s}$ being the section of the vector bundle 
$$Q=\theta^*(\ad_\cE)/\Omega^1_{\cE_{G/P}/X}\otimes \theta^*L$$
induced by $s$. Since $\ad(\cE)$ is self-dual, its determinant is trivial (at least up to $2$-torsion, something we will ignore in the sequel since we will always work with varieties whose Picard group has no torsion). On the one hand,  we get the simple formula
$$K_{\zero(\tilde{s})}=\theta^*(K_X\otimes L^{\dim P})|_{\zero(\tilde{s})}.$$
On the other hand, the dimension of $\zero(\tilde{s})$ is
$$\dim \zero(\tilde{s})=\dim X-\ell_P,$$
where $\ell_P$ denotes the dimension of the Levi part of $P$, which can be computed as $\ell_P=2\dim P-\dim G$. 

If we require, for example, $\zero(\tilde{s})$ to be of dimension $d$
with trivial canonical bundle, we need the index of $X$ to be 
equal to $\dim P$ (or a multiple, if $L$ is divisible), while its
dimension must be $\dim X=d+\ell_P$. This yields the relation $$\ind(X)=\dim X-d+\dim G/P.$$
Because of \eqref{KOin}, this implies that
\begin{equation}
	\label{restrictionOrbit}
	\dim G/P\le d+1.
\end{equation}
Moreover, in case of equality $X$ must be a projective space, while if $\dim G/P= d$, then $X$ must be a quadric.

\subsection{Nilpotent orbits in type A}
If $\fg=\fsl_e$, every nilpotent orbit is a Richardson orbit, and admits a symplectic resolution. 
Nilpotent orbits are in bijective correspondence with partitions of $e$, the parts of the partition 
being the sizes of the Jordan blocks. Let us denote 
by $\cO_{\lambda}$ the nilpotent orbit associated to the partition $\lambda$ of $e$.
Symplectic resolutions of $\overline{\cO_{\lambda}}$ are given by the cotangent bundles of the flag 
varieties $F_{\underline{d}}$, where the sequence $\underline{d}=(\lambda^*_{\sigma(1)}, 
\lambda^*_{\sigma(1)}+\lambda^*_{\sigma(2)}, \ldots )$ for some permutation $\sigma$, and $\lambda^*$
is the partition dual to $\lambda$, i.e.\ $\lambda^*_i$ is the number of parts of $\lambda$ which are greater than or equal to $i$. Hence, a given orbit closure has in general several non-equivalent 
symplectic resolutions, being Richardson with respect to different types of parabolic subgroups.

Inside $\fsl_e$, an orbit $\cO_{\mu}$
is contained in the closure of $\cO_{\lambda}$ if and only if $\mu\le\lambda$ with respect to the 
dominance order, which means that $\mu_1+\cdots +\mu_i\le \lambda_1+\cdots +\lambda_i$ for all $i$. 
So the irreducible components of the singular locus of  $\overline\cO_{\lambda}$ are the orbit closures 
$\overline\cO_{\mu}$, where $\mu$ is obtained from $\lambda$ by moving a corner of the diagram of $\lambda$ down to the first possible lower row; the codimension is 
then twice the difference of rows between the initial and final 
positions of the corner that has been moved. An easy consequence is that the codimension of the 
singular locus is at least four exactly when $\lambda_i-\lambda_{i+1}\in \{0,1\}$ for all $i$.

\medskip
In the relative setting, we consider a vector bundle $E$ of rank $e$ on $X$, and a line bundle $L$. 
For a morphism $\varphi : E\lra E\otimes L$, and a partition $\lambda$ of $e$, we consider the locus 
$D_{\lambda}(\varphi)$ of points $x\in X$ where the traceless part of $\varphi_x$ is nilpotent 
of Jordan type $\lambda$, or more degenerate. When $\End(E)\otimes L$ is globally generated, and 
$\varphi$ is general, a birational model of $D_{\lambda}(\varphi)$ is the zero-locus $\zero(\tilde\varphi)$
of the corresponding section of  $\tilde\varphi$ of $\theta^*(\End(E)\otimes L)/\cW_E$ on the relative
flag variety $F_{\underline{d}}(E)$, where  $\cW_E$ is the 
relative cotangent bundle, twisted by $L$. If we denote by $d(\lambda)$ the relative dimension of 
$F_{\underline{d}}(E)$ (which depends only of $\lambda$), we deduce that 
$$K_{\zero(\tilde\varphi)}=\theta^*(K_X\otimes L^{e^2-1-d(\lambda)})|_{\zero(\tilde\varphi)}.$$
Moreover the codimension of $D_{\lambda}(\varphi)$ in $X$ is equal to the dimension of the Levi part of 
the parabolic, that is, 
$$\dim D_{\lambda}(\varphi)=\dim X-\sum_i(\lambda^*_i)^2+1.$$

Consider for example the minimal orbit closure $Y_{\min}$ in $\fsl_e$. This is the closure of the orbit of nilpotent 
endomorphisms of rank one, whose projectivization is the flag variety $F_{1,e-1}(\CC^e)$. This orbit 
closure $Y_{\min}$ has two symplectic resolutions, by the cotangent bundles of $\PP(\CC^e)$ and its dual. 
In the relative setting we get the formulas
$$K_{\zero(\tilde\varphi)}=\theta^*(K_X\otimes L^{e(e-1)})|_{\zero(\tilde\varphi)}, \qquad 
\dim D_{\lambda}(\varphi)=\dim X-(e-1)^2.$$

Consider finally the maximal orbit closure $Y_{\max}$ in $\fsl_e$. This is the full nilpotent cone, 
and its unique symplectic resolution is the Springer resolution by the cotangent bundle of the 
full flag variety.  
In the relative setting we get the formulas
$$K_{\zero(\tilde\varphi)}=\theta^*(K_X\otimes L^{e(e+1)/2})|_{\zero(\tilde\varphi)}, \qquad 
\dim D_{\lambda}(\varphi)=\dim X-(e-1).$$

\subsection{$G_2$-structures}
Recall that $G_2$ can be defined as the stabilizer of a generic skew-symmetric three-form in seven variables. 
More precisely, there is a degree seven $SL_7$-invariant polynomial $P$ on $\wedge^3(\CC^7)^*$ such that a three-form
on which $P$ does not vanish has a stabilizer isomorphic to $G_2$. This implies that a $G_2$-principal bundle on $X$ can be defined from a rank seven bundle $E$ on $X$, with a global three-form $\omega : \wedge^3E\ra L$, for some line 
bundle $L$, such that the induced map $P(\omega) : (\det E)^3\ra L^7$ is an isomorphism. 

By reduction to $\fsl_3$, one way to do that would be to start with a rank three vector bundle $F$ with 
trivial determinant. Let $\alpha : \wedge^3F\ra \cO_X$ and $\alpha^* : \wedge^3F^*\ra \cO_X$ be some trivializations. 
Then the rank seven vector bundle $E=F\oplus \cO_X\oplus F^*$ defines a $G_2$-structure on $X$: indeed, there is a natural three-form $\omega$ on $E$ defined by the composition 
\[\xymatrix{
	\wedge^3E \ar[r] & \wedge^3F\oplus (F\otimes\cO_X\otimes F^*)\oplus \wedge^3F^* \ar[r]^-\mu & \cO_X,
}
\]
where $\mu =\alpha\oplus id_F\oplus \alpha^*$. 
This three-form is everywhere non-degenerate, i.e.\ $P(\omega)$ does not vanish. 
In this setting the adjoint bundle is 
$$\ad_{\fg_2}(E)=\ad(F)\oplus F^*\oplus F.$$

\subsection{Examples of small dimension}
For the construction of varieties with trivial canonical bundle up to dimension $d=4$, condition \eqref{restrictionOrbit} leaves only few possibilities, which we compile in Table \ref{table:richardson}. In such table $\QQ^n$ denotes the $n$-dimensional quadric, while $F_{\underline{d}}$ (resp.\ $OF_{\underline{d}}$) denotes (partial) flag varieties (resp.\ of isotropic subspaces with respect to a non-degenerate symmetric form). The integer $\delta$ in the last column is the degree of the map $\Omega^1_{G/P}\ra\overline\cO$; it is always equal to 
one in type A or for the Springer resolutions (cases (6) and (9)). It is easy to check that its value is two for 
odd dimensional projective spaces, considered as homogeneous varieties for symplectic groups 
(cases (4) and (11)). For cases (5) and (12) see \cite[Proposition 3.21]{Fu03}; cases (15) and (16) for $G_2$ are discussed in \cite[Lemma 5.4 and Appendix]{Fu07}. The closure of each Richardson orbit listed in Table \ref{table:richardson} is normal and has rational singularities, see \cite{KraftProcesi82,Kraft89}.

\begin{table}[h!bt]
	\begin{center}
		\caption{Some Richardson orbits}
		\label{table:richardson}
		\begin{tabular}{ccccccc} \toprule
			& $\dim G/P$ & $G/P$ & $G$ & $\ell_P$ & $\codim 
			(\Sing \overline\cO)$ & $\delta$
			\\ \midrule
			(1) & 1 & $\PP^1$ & $SL_2$ &1& 2&1
			\\ 
			(2) & 2 & $\PP^2$ & $SL_3$ &4& 4&1
			\\ 
			(3) & 3 & $\PP^3$ & $SL_4$ &9& 6&1
			\\ 
			(4) &   & $\PP^3$ & $Sp_4$ &4& 2&2
			\\ 
			(5) &   & $\QQ^3$ & $SO_5$ &4& 2&1
			\\ 
			(6) &   & $F_{1,2}$ & $SL_3$ &2& 2&1
			\\ 
			(7) & 4 & $\PP^4$ & $SL_5$ &16& 8&1
			\\ 
			(8) &   & $\QQ^4$ & $SL_4$ &7& 2&1
			\\ 
			(9) &   & $OF_{1,2}$ & $SO_5$ & 2& 2&1
			\\ 
			(10) & 5 & $\PP^5$ & $SL_6$ &25& 10&1
			\\ 
			(11) & & $\PP^5$ & $Sp_6$ &11& 2&2
			\\ 
			(12) & & $\QQ^5$ & $SO_7$ &11& 2&1
			\\ 
			(13) & & $F_{1,2}$ & $SL_4$ &5& 2&1
			\\ 
			(14) & & $F_{1,3}$ & $SL_4$ &5& 2&1
			\\ 
			(15) & & $\QQ^5$ & $G_2$ &4& 2&2 
			\\ 
			(16) & & $G_2^{\ad}$ & $G_2$ &4& 2&1
			\\ \bottomrule
		\end{tabular}
	\end{center}
\end{table}

\subsubsection{Threefolds}
If we want to construct threefolds with trivial canonical bundle, then we can use cases (1) to (9). In cases (3)-(6), the base variety $X$ must be a quadric of dimension $\ell_P+3$, and in cases (7)-(9), a projective space of this dimension. The line bundle $L$ must be the generator of the Picard group and the principal bundle can always be chosen to be the trivial one. But other choices are 
possible; if the structure group is $G=SL_e$ we need a rank $e$ vector bundle $E$ such that $\ad(E)\otimes L$ is generated by global sections, and $E=k\cO\oplus (e-k)\cO(1)$ is always a solution (by symmetry we may suppose that $2k\le e$).
If the structure group is $G=Sp_4$ or $G=SO_5$ (recall the exceptional isomorphism  $Sp_4\simeq Spin_5$, by which $\QQ^3 \cong \IGr(2,4)$, the symplectic Grassmannian of isotropic $2$-planes with respect to a non-degenerate skew-symmetric form), we need a vector bundle of rank $4$ or $5$ with an everywhere non-degenerate 
bilinear form, possibly with values in a line bundle. For 
$G=Sp_4$ we can choose $E=2\cO\oplus 2\cO(1)$, but 
we found no non-trivial solution for $SO_5$. In 
case (2), the base variety $X$ must be of dimension $7$ and index $6$, hence a del Pezzo manifold. 
For example it could be a cubic hypersurface in $\PP^8$ or the intersection of two quadrics in $\PP^9$. 
In case (1), $X$ must be of dimension $4$ and index divisible by $2$, so essentially a Mukai variety.

\subsubsection{Fourfolds}
If we want to construct fourfolds with trivial canonical bundle, we can also use cases (10)-(16), for which the base variety $X$ must be a projective space of dimension $\ell_P+4$, and cases (7)-(9), with a quadric of this dimension. 
Note that cases (13) and (14) correspond to two different desingularizations of the same nilpotent orbit. 
For cases (3)-(6), we need a base variety $X$ of coindex two.

Apart from complete intersections, for case (6) we can use
$X=\Gr(2,5)$. We have then several additional choices for our bundle $E$, which can be
$$E=3\cO, \quad 2\cO\oplus\cO(-1), \quad  
\cU\oplus\cO, \quad \cU\oplus\cO(-1), \quad \cQ.$$ 
All of these fourfolds turn out to have Euler characteristic $\chi(\cO_{D_Y(s)})=2$, as a direct computation in \Mac\ shows, hence are Calabi--Yau varieties.

For case (2) we need a variety $X$ of dimension $8$ and index $6$, and apart from complete intersections we can
choose $X=\Gr(2,6)$ and $E$ one of the bundles
$$E=3\cO, \quad 2\cO\oplus\cO(-1), \quad  
\cU\oplus\cO, \quad \cU\oplus\cO(-1).$$ 
In this case the orbital degeneracy locus has only isolated singularities; their resolutions have characteristic two as well.

\section{Fano degeneracy loci}
\label{fanoDegLoci}

In this section we exhibit some Fano and almost Fano varieties obtained as orbital degeneracy loci, or resolutions thereof. The case of threefolds is pretty interesting: by computing their invariants (for instance, by means of \Mac), the existing complete classifications (see \cite{IP99}) will allow us to identify them
explicitely.  

In the case of the subvariety $\overline{Y_2} \subset \wedge^3\CC^6$ of partially decomposable forms, studied in Section \ref{partDecForms}, the equation to be satisfied in order to construct a Fano variety is 
\[
K_X=(\det E)^{-5}\otimes L
\]
where $L$ is a line bundle whose dual is ample. In this way,
$K_{D_Y(s)}=L|_{D_Y(s)}$. If we try to find threefolds (resp.\ fourfolds), one possibility is to require the variety $X$ to be of index $6$ and dimension $8$ (resp.\ $9$). As in the Calabi--Yau case, we can look for such $X$ among subvarieties of homogeneous spaces. 

Similarly, for nilpotents orbits the restriction  \eqref{restrictionOrbit} on the dimension $d$ of the degeneracy locus given by 
the Kobayashi--Ochiai inequality becomes
\[
\dim G/P \leq d.
\]
In all cases, the line bundle $L$ we use to twist our nilpotent degeneracy loci will necessarily be $\cO_X(1)$. Notice that for Fano varieties one more issue arises if the degeneracy locus is singular, more precisely if the codimension of the singularities of the corresponding orbit closure is smaller than or equal to $d$. Then its resolution will 
not be Fano, but only almost Fano \cite{JPR06}, in the sense that the anticanonical bundle is nef and big.

\begin{remark}
	Suppose that $Y$ has rational singularities and that $D_Y(s)$ is a Fano degeneracy locus of dimension three. Recall that by Proposition \ref{Gorcansing} $D_Y(s)$ is Gorenstein and has canonical singularities: we are therefore in the hypotheses of \cite[Theorem 8.3]{JPR06}. The crepant resolution $\theta':\zero(\tilde{s})\to D_Y(s)$ is in fact the morphism from $\zero(\tilde{s})$ to its anticanonical model. In addition to that, in all the cases we consider, our orbital degeneracy loci $D_Y(s)$ will be anticanonically embedded.
\end{remark}

\subsection{Fano threefolds}
If we want to construct smooth threefolds, only cases (2) and (3) remain, and the only possibilities for $X$ are respectively the $7$-dimensional quartic $\QQ^7$ and $\PP^{12}$.

In Table \ref{tab:3foldsFano} we collect the examples of Fano threefolds $F$ that we constructed as 
orbital degeneracy loci, and the model they correspond to, found using the existing classifications. For each case it is sufficient to compute $(-K_F)^3$ and $\chi(\Omega^1_F)$ to identify the variety.

\begin{table}[h!bt]
	\begin{center}
		\begin{footnotesize}
			\caption{Some Fano degeneracy loci $F$ of dimension $3$}
			\label{tab:3foldsFano}
			\begin{tabular}{cccc} \toprule
				& X & E & Model
				\\ \midrule
				$\overline{Y_2}\subset \wedge^3\mathbb{C}^6$ & $\Gr(2,6)$ & ${\cal U}_X^* \oplus 4{\cal O}_X$ & $\begin{array}{c}
				\mbox{Blow-up of $\PP^3$ along a curve} \\
				\mbox{of degree $7$ and genus $5$}
				\end{array}$
				\\	\midrule
				$\overline{Y_2}\subset \wedge^3\mathbb{C}^6$ & $\Gr(2,6)$ & ${\cal Q}_X\oplus 2{\cal O}_X$ & $\begin{array}{c} 
				\mbox{Blow-up of $F(1,2,3)$ along an} \\
				\mbox{elliptic curve which is an intersection} \\
				\mbox{of two divisors from $|-\frac{1}{2}K_{F(1,2,3)}|$ 
				}  \end{array}$
				\\	\midrule
				(2) $\PP^2, SL_3$ & $\QQ^7$ & $3{\cal O}_X$ & Divisor  of bidegree $(2,2)$ in $\PP^2\times \PP^2$
				\\	\midrule
				(2) $\PP^2, SL_3$ & $\QQ^7$ & ${\cal O}_X(-1)\oplus 2{\cal O}_X$ & Intersection of three quadrics in $\PP^6$
				\\	\midrule
				(3) $\PP^3, SL_4$ & $\PP^{12}$ & $4{\cal O}_X$ & $\begin{array}{c}
				\mbox{Blow-up of $\PP^3$ along a curve} \\
				\mbox{of degree $6$ and genus $3$}
				\end{array}$
				\\	\midrule
				(3) $\PP^3, SL_4$ & $\PP^{12}$ & ${\cal O}_X(-1)\oplus 3{\cal O}_X$ & Intersection of three quadrics in $\PP^6$
				\\ \bottomrule
			\end{tabular}
		\end{footnotesize}
	\end{center}
\end{table}

\begin{remark}
	As in the case of partially decomposable forms, for nilpotent orbits some choices for $E$ give rise to empty loci or complete intersections. A case by case study falls outside the aims of the paper, but as an example we give the following, arising when $Y$ is the orbit of nilpotent matrices of rank $1$ under the action of $SL_n$.
	
	Let us suppose that 
	$E=(n-j)\cO_X\oplus j\cO_X(-1)$, with $j\geq 2$, $n-j\geq j$, and 
	$L=\cO_X(1)$. Then a $j\times (n-j)$ block of the matrix representing the section $s$ is constant on the variety $X$. As $s$ is general, the matrix has at least rank $j$, and if $j\ge 2$ this implies that $D_Y(s)$ is empty.
	Similarly if $E=(n-1)\cO_X\oplus \cO_X(-1)$, then it can be seen that $D_Y(s)$ is just the zero locus of $(n-2)(n-1)$ sections of $\cO_X(1)$ and $(n-1)$ sections of $\cO_X(2)$. This is coherent with what we have obtained in Table \ref{tab:3foldsFano}.
\end{remark}

\subsection{Almost Fano threefolds}
Let us now consider the case of almost Fano threefolds, which will be constructed from nilpotent orbit closures that are 
singular in codimension two. They are  listed in Table \ref{3foldsalmostFano}; the subscripts denote the degree of the complete intersection in the ambient space. The relevant orbits are those  labeled (1), (4), (5), (6) in Table \ref{table:richardson}. The case (4) is particular, as it is the only one for which $\theta':\zero(\tilde{s}) \rightarrow D_Y(s)$ is finite but not birational. In case (1) the variety $X$ has to be a del Pezzo fourfold, 
which means that the index is equal to three, and a complete classification is available (see for instance \cite[Theorem 3.3.1]{IP99}). In case (4) and (5) the variety $X$ is $\PP^7$ and in case (6) it is $\PP^5$.

Notice that the orbit closures (1) and (6) are the full nilpotent cones in the respective Lie algebras. For them the degeneracy locus is well understood, as explained in the following remark.
\begin{remark}
	\label{nilpconefano}
	Let ${\cal N}$ be the nilpotent cone in the simple Lie algebra $\fg$. Since ${\cal N}$ is a complete intersection in $\fg$ (see \cite{ko63}), the degeneracy locus $D_{\cal N}(s)$ is also a (possibly singular) complete intersection of hypersurfaces defined by (non-generic) sections of $L^d$, where $d$ belongs to the set of fundamental exponents of $\fg$. In particular for the group $SL_n$, $D_{\cal N}(s)$ is defined by the vanishing of the coefficients of the characteristic polynomial of the matrix describing $\cE_{\mathfrak{sl}_n}\otimes L$.
\end{remark}
For the nilpotent cone in $SL_2$, $D_{\cal N}(s)$ is the zero locus of $\det(s)\in \HHH^0(X,L^2)$. Similarly, for $SL_3$, $D_{\cal N}(s)$ is the intersection of the zero locus of a section of $L^2$ and a section of $L^3$ (again $\det(s)$). Therefore, in both these cases, the almost Fano threefold $D_Y(s)$ is a degeneration of a smooth Fano threefold which is a complete intersection. These varieties have already been studied, for example see \cite{JPR06}.
The only ambiguity among these cases is the model of the one that is constructed inside $X=\PP(2,1,1,1,1,1)_4$, a quartic hypersurface in the weighted projective space $\PP(2,1,1,1,1,1)$.

\begin{proposition}
	Let $X=\PP(2,1,1,1,1,1)_4$. Denote by $D_Y(s)$ the almost Fano threefold constructed from a bundle $E$ of rank two over $X$ using the orbit closure $Y$ of nilpotent matrices in $\mathfrak{sl}_2$ (orbit (1) in Table \ref{table:richardson}). Then:
	\begin{itemize}
		\item if $E=2\cO_X$, $D_Y(s)$ is a double cover of a quadric $W$ in $\PP^4$ ramified along a its intersection with a quartic;
		\item if $E=\cO_X\oplus \cO_X(1)$, $D_Y(s)$ is a quartic in $\PP^4$.
	\end{itemize}
\end{proposition}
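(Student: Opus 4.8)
The plan is to reduce the statement to one explicit equation in each case and then read off the geometric model. Recall first that $X=\PP(2,1,1,1,1,1)_4$ is the del Pezzo fourfold of degree two, i.e.\ a double cover $\rho\colon X\to\PP^4$ branched along a quartic threefold $B=\{B_4=0\}$, with $\cO_X(1)=\rho^*\cO_{\PP^4}(1)$ and $K_X=\cO_X(-3)$. From $\rho_*\cO_X=\cO_{\PP^4}\oplus\cO_{\PP^4}(-2)$ one gets $\HHH^0(X,\cO_X(1))=\HHH^0(\PP^4,\cO_{\PP^4}(1))$ and $\HHH^0(X,\cO_X(2))=\HHH^0(\PP^4,\cO_{\PP^4}(2))\oplus\CC\,z$, where $z$ is the weight-two coordinate, anti-invariant for the covering involution and satisfying $z^2=\rho^*B_4$ on $X$. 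As recalled in Sections \ref{assdegloc} and \ref{fanoDegLoci}, the twisting line bundle is $L=\cO_X(1)$; and since the orbit closure $Y$ is the nilpotent cone of $\fsl_2$, which is the quadric cone $\{\det=0\}$, Remark \ref{nilpconefano} identifies $D_Y(s)$ with the divisor $\{\det s=0\}\in|L^2|=|\cO_X(2)|$, where $s$ is a general section of $\ad_\cE\otimes L=\fsl(E)\otimes\cO_X(1)$ and $\det s\in\HHH^0(X,\cO_X(2))$ is the determinant of the traceless endomorphism $s$. The proof then amounts to writing $\det s$ out explicitly.

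First I would decompose $\fsl(E)\otimes\cO_X(1)$ into line bundles and describe a general section as a traceless $2\times2$ matrix. For $E=2\cO_X$ this bundle is $\fsl_2\otimes\cO_X(1)$, which is globally generated, and a general section is $s=\left(\begin{smallmatrix}a&b\\ c&-a\end{smallmatrix}\right)$ with $a,b,c$ general (hence linearly independent) linear forms on $X$; by the identity $\HHH^0(X,\cO_X(1))=\HHH^0(\PP^4,\cO_{\PP^4}(1))$ they are pull-backs $a=\rho^*\alpha$, $b=\rho^*\beta$, $c=\rho^*\gamma$, so that $\det s=-(a^2+bc)=-\rho^*(\alpha^2+\beta\gamma)$. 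For $E=\cO_X\oplus\cO_X(1)$, the traceless diagonal and the two off-diagonal summands give $\fsl(E)=\cO_X\oplus\cO_X(1)\oplus\cO_X(-1)$, hence $\fsl(E)\otimes\cO_X(1)=\cO_X(1)\oplus\cO_X(2)\oplus\cO_X$, still globally generated; the crucial feature is the trivial summand, so a general section reads $s=\left(\begin{smallmatrix}a&\lambda\\ q&-a\end{smallmatrix}\right)$ with $a=\rho^*\alpha$ a linear form, $q\in\HHH^0(X,\cO_X(2))$, and $\lambda\in\CC^*$ a nonzero scalar, giving $\det s=-(a^2+\lambda q)$, which after rescaling $\lambda$ to $1$ becomes $-(a^2+q)$.

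In the first case it then follows that $D_Y(s)=\rho^{-1}(W)$, where $W=\{\alpha^2+\beta\gamma=0\}\subset\PP^4$ is a quadric, of corank two by the generality of $\alpha,\beta,\gamma$; since $W\not\subset B$ for general $s$, the covering $\rho$ restricts to a double cover $D_Y(s)\to W$ branched along $W\cap B$, i.e.\ along the intersection of $W$ with a quartic hypersurface, which is the asserted model. In the second case, I would write $q=\rho^*q_0+t\,z$ with $q_0$ a quadratic form on $\PP^4$ and $t\in\CC^*$ (by generality); then on $X$ the equation $\det s=0$ is the relation $t\,z=-\rho^*(\alpha^2+q_0)$, linear in $z$, and substituting it into $z^2=\rho^*B_4$ exhibits $\rho$ as an isomorphism from $D_Y(s)$ onto the quartic hypersurface $\{(\alpha^2+q_0)^2=t^2B_4\}\subset\PP^4$, with explicit inverse $[x]\mapsto[-t^{-1}(\alpha^2+q_0)(x):x]$. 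En passant, by Proposition \ref{propressing} one has $\Sing D_Y(s)=D_{\Sing Y}(s)=\{s=0\}$, which is the curve lying over the vertex $0\in\Sing Y$ in the first case and is empty in the second (since $\lambda\neq0$); so $D_Y(s)$ is singular along a curve in the first case and smooth in the second.

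The line-bundle decompositions and the adjunction and Chern-class bookkeeping are routine (and incidentally re-confirm $-K_{D_Y(s)}=\cO_X(1)|_{D_Y(s)}$ and $(-K_{D_Y(s)})^3=2\,\cO_X(1)^4=4$ in both cases). The one genuinely delicate point, and the crux of the second case, is to notice that a general section $s$ for $E=\cO_X\oplus\cO_X(1)$ has a nonzero scalar as one of its matrix entries: this is precisely what lets one eliminate the weight-two coordinate $z$ linearly, turning $D_Y(s)$ — a priori only a divisor of $|\cO_X(2)|$ on a double cover of $\PP^4$ — into an honest quartic threefold in $\PP^4$. A secondary point is to check that this restriction of $\rho$ is an isomorphism rather than merely a finite morphism, which is immediate from the displayed inverse.
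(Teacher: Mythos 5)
Your proof is correct and follows essentially the same route as the paper: both identify $D_Y(s)$ with the divisor $\{\det s=0\}\in|\cO_X(2)|$ via Remark \ref{nilpconefano}, use the description of $X$ as a double cover of $\PP^4$ branched along a quartic, and then observe that $\det s$ is a pulled-back quadric when $E=2\cO_X$, while for $E=\cO_X\oplus\cO_X(1)$ it involves the weight-two coordinate linearly (with nonzero coefficient by generality), allowing its elimination and exhibiting $D_Y(s)$ as a quartic in $\PP^4$. Your write-up merely makes explicit a few points the paper leaves implicit (the nonvanishing scalar entry, the inverse of the projection, the singularity remarks), but the argument is the same.
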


\begin{proof}
	We can suppose that $X\subset \PP(2,1,1,1,1,1)$ is defined by the quartic $P=x_0^2+P_4(x_1,...,x_5)$, where $P_4$ is a polynomial of degree $4$. By projecting on the last five coordinates, $X$ is realized as a double cover of $\PP^4$ ramified along the quartic $\{P_4=0\}$. Moreover, by Remark \ref{nilpconefano} and what follows, $D_Y(s)$ is the zero locus of $\det(s)\in \HHH^0(X,\cO_X(2))$. 
	
	If $E=2\cO_X$, the entries of the matrix representing $s$ are sections of $\cO_X(1)$, i.e.\ polynomials in the variables $x_1,\dotsc,x_5$. Therefore, $\det(s)$ has the form $Q=Q_2(x_1,...,x_5)$ for $Q_2$ a polynomial of degree $2$; as a consequence, $D_Y(s)$ is the double cover of $W=\{Q_2=0\}\subset \PP^4$ ramified along $\{P_4=0\}$.
	
	If $E=\cO_X\oplus \cO_X(1)$, one entry of the matrix representing $s$ is a section of $\cO_X(2)$. Therefore, $\det(s)$ has the form $P'=x_0+P_2(x_1,...,x_5)$, where $P_2$ has degree $2$. This implies that $D_Y(s)$ is actually the quartic in $\PP^4$ defined by the equation $P_4=P_2^2$.
\end{proof}

A little bit more involved is the case of the orbit (5). As already mentioned, $X=\PP^7$, and we have (at least) two choices for $E$ of rank $4$ (we use the isomorphism $Sp_4 \simeq Spin_5$), i.e.\ $E=4\cO_X$ or $E=2\cO_X\oplus 2\cO_X(1)$. In both cases we could compute the degree of $D_Y(s)$ with respect to the anticanonical bundle using \Mac: it is equal to $10$ and $8$, respectively. We guess that $D_Y(s)$ should have an interpretation similar to the one for the other almost Fano degeneracy loci of the same degrees that appear in Table \ref{3foldsalmostFano}.

The degeneracy loci $D_Y(s)$ constructed from the orbit (4) are exactly the same as those constructed from the orbit (5), as for both of them $Y$ is the closure of the subregular nilpotent orbit in $\mathfrak{sp}_4$ (see e.g.\ \cite{co93}); in this case however the morphism $\theta':\zero(\tilde{s})\to D_Y(s)$ is of degree $2$ rather than birational. When $E=4\cO_X$, $\zero(\tilde{s})$ is of degree $20=2\cdot 10$, and when $E=2(\cO_X\oplus \cO_X(1))$ it is of degree $16=2 \cdot 8$, as one would expect. However, we computed $\chi(\cO_{\zero(\tilde{s})})=2$ in the latter case, which seems to indicate that $\zero(\tilde{s})$ splits into two connected components, each isomorphic to the desingularization of $D_Y(s)$ given by case (5).

\begin{table}[h!bt]
	\begin{center}
		\setlength{\tabcolsep}{2.5pt}
		\begin{footnotesize}
			\caption{Some almost Fano degeneracy loci $F$ of dimension $3$}
			\label{3foldsalmostFano}
			\begin{tabular}{ccccc} \toprule
				& X & E & $(-K_F)^3$ & Model
				\\ \midrule
				(1) $\PP^1, SL_2$ & $\PP^2\times \PP^2$ & $\begin{array}{c}
				2\cO_X \\
				\cO_X\oplus \cO(1,1) \\
				\cO(1,0)\oplus \cO(0,1)
				\end{array}$ & $12$ & $(\PP^2\times \PP^2)_{2}$
				\\ \midrule
				(1) $\PP^1, SL_2$ & $\Gr(2,5)_{1^2}$ & $\begin{array}{c}
				2\cO_X \\
				\cO_X\oplus \cO(1) \\
				{\cal U}_{X}^* 
				\end{array}$ & $10$ & $\Gr(2,5)_{1^2,2}$
				\\ \midrule
				(1) $\PP^1, SL_2$ & $\PP^6_{2^2}$ & $\begin{array}{c}
				2\cO_X \\
				\cO_X\oplus \cO(1)
				\end{array}$ & $8$ & $\begin{array}{c}
				\PP^6_{2^3} \\
				\mbox{\cite[Prop.\ 8.10]{JPR06}}
				\end{array}$
				\\	\midrule
				(1) $\PP^1, SL_2$ & $\PP^5_3$ & $\begin{array}{c}
				2\cO_X \\
				\cO_X\oplus \cO(1)
				\end{array}$ & $6$ & $\begin{array}{c}
				\PP^5_{3,2} \\
				\mbox{\cite[Prop.\ 8.10]{JPR06}}
				\end{array}$
				\\	\midrule
				(1) $\PP^1, SL_2$ & $\PP(2,1,1,1,1,1)_4$ & $\begin{array}{c}
				2\cO_X \\
				\cO_X\oplus \cO(1)
				\end{array}$ & $4$ & $\begin{array}{c}
				\mbox{\cite[Prop.\ 8.9]{JPR06}} \\
				\mbox{\cite[Prop.\ 8.10]{JPR06}}
				\end{array}$ 
				\\	\midrule
				(1) $\PP^1, SL_2$ & $\PP(3,2,1,1,1,1)_6$ & $\begin{array}{c}
				2\cO_X \\
				\cO_X\oplus \cO(1)
				\end{array}$ & $2$ & \mbox{\cite[Prop.\ 8.9]{JPR06}}
				\\	\midrule
				(5) $\IGr(2,4), Sp_4$ & $\PP^{7}$ & $\begin{array}{c}
				4\cO_X \\
				2\cO_X\oplus 2\cO(1)
				\end{array}$ & $\begin{array}{c}
				10 \\
				8
				\end{array}$ & ?
				\\	\midrule
				(6) $F_{1,2}, SL_3$ & $\PP^{5}$ & $\begin{array}{c}
				3\cO_X \\
				2\cO_X\oplus \cO(1)
				\end{array}$ & $6$ & $\begin{array}{c}
				\PP^5_{3,2} \\
				\mbox{\cite[Prop.\ 8.10]{JPR06}}
				\end{array}$
				\\ \bottomrule
			\end{tabular}
		\end{footnotesize}
	\end{center}
\end{table}

Finally, we describe the morphism $\theta': \zero(\tilde{s}) \to D_Y(s)$. 

\begin{proposition}
	\label{divcontract}
	For all the cases considered in Table \ref{3foldsalmostFano}, the desingularization $\theta': \zero(\tilde{s}) \to D_Y(s)$ is a divisorial contraction. 
\end{proposition}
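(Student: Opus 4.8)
The statement asserts that, in each case of Table~\ref{3foldsalmostFano} --- i.e.\ for the Richardson orbits (1), (5), (6) of Table~\ref{table:richardson}, all of which have $p_W$ birational and $\codim(\Sing\overline{\cO})=2$ --- the exceptional locus of $\theta':\zero(\tilde{s})\to D_Y(s)$ is a divisor in the threefold $\zero(\tilde{s})$. The plan is to obtain this from general principles, and then to describe the exceptional divisor explicitly.

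First I would assemble the structural input. By Proposition~\ref{propressing}, since $\overline{\cO}$ is normal and $p_W$ is birational, the variety $\zero(\tilde{s})$ is smooth (a general zero locus of the globally generated bundle $Q=\theta^*(\ad_\cE)/\Omega^1_{\cE_{G/P}/X}\otimes\theta^*L$), the threefold $D_Y(s)$ is normal, and $\theta'$ is a proper birational morphism; moreover $\Sing D_Y(s)=D_{\Sing Y}(s)$, and since $\Sing Y=\overline{\cO}\setminus\cO$ has codimension two in $Y$, the curve $C:=D_{\Sing Y}(s)$ is a reduced, purely one-dimensional subscheme of $D_Y(s)$ (this uses the local triviality of $D_{\Sing Y}(\cE)$ over $X$ from the proof of Proposition~\ref{propressing}). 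Now, by the purity theorem for exceptional loci --- a proper birational morphism from a smooth variety onto a normal one has exceptional locus that is either empty or pure of codimension one --- the locus $\mathrm{Exc}(\theta')$ is pure of codimension one in $\zero(\tilde{s})$, and it is nonempty because $D_Y(s)$ is singular along $C$. This already proves that $\theta'$ is a divisorial contraction.

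To pin down $\mathrm{Exc}(\theta')$ more precisely I would argue as follows. Reading off the relative Kempf diagram~\eqref{relversion}, the fiber $\theta'^{-1}(x)$ over $x\in X$ is, after trivializing $\cE$ near $x$, the Kempf fiber $p_W^{-1}(v)$ with $v=s(x)\in\overline{\cO}$ (the twist by $L$ plays no role on a single fiber of $\theta$). By $G$-equivariance $p_W$ is an isomorphism over the open orbit $\cO$, so $\theta'^{-1}(x)$ is a point for $x\in D_Y(s)\setminus C$; for $x\in C$ it is positive-dimensional by Zariski's Main Theorem. Hence $\mathrm{Exc}(\theta')=\theta'^{-1}(C)$, and the bound $2=\dim C+1\le\dim\theta'^{-1}(C)\le\dim\zero(\tilde{s})-1=2$ re-confirms it is a divisor. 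Finally, since $\ad_\cE\otimes L$ is globally generated and $s$ is general, $s$ maps $C$ into the dense orbit $\cO''\subset\partial\overline{\cO}$ (the deeper boundary strata have higher codimension in $Y$, hence empty degeneracy loci); a transverse slice to $\cO''$ inside $\overline{\cO}$ is then a normal surface singularity carrying, via $p_W$, a crepant resolution, hence a Du Val singularity, so that over a dense open subset of $C$ the divisor $\mathrm{Exc}(\theta')$ is the associated bundle of $\PP^1$-configurations. Concretely this slice is of type $A_1$ in case (1) ($\overline{\cO}$ being the quadric cone in $\fsl_2$), of type $A_2$ in case (6) by the Brieskorn--Slodowy description of the subregular slice in $\fsl_3$, and of the type prescribed by the Kraft--Procesi analysis \cite{KraftProcesi82} of the codimension-two degeneration $\overline{\cO_{\min}}\subset\overline{\cO_{\mathrm{subreg}}}$ in $\mathfrak{sp}_4$ in case (5).

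The main obstacle, beyond routine bookkeeping, will be this last point: checking that the analytic type of $D_Y(s)$ along $C$ is constant and equal to that Du Val slice --- so that $\mathrm{Exc}(\theta')$ has no extra components contracted to isolated points of $C$, and one can count its irreducible components --- together with the explicit slice computation for case (5). If only the bare conclusion ``divisorial contraction'' is required, the purity argument of the second paragraph bypasses all of this.
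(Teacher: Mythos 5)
Your second paragraph rests on a purity statement that is false as quoted: properness and birationality with \emph{smooth source and normal target} do \emph{not} force the exceptional locus to be a divisor. Purity of the exceptional locus (van der Waerden) requires the \emph{target} to be smooth, or at least locally ($\mathbb{Q}$-)factorial; with only normality of $D_Y(s)$ the conclusion can fail, the standard counterexample being a small resolution of the threefold node $\{xy=zw\}$: the source is smooth, the target is normal, Gorenstein, with canonical (even terminal) singularities --- exactly the class Proposition \ref{Gorcansing} places $D_Y(s)$ in --- the morphism is crepant, and the exceptional locus is a single $\PP^1$ of codimension two. Nothing in the setup gives $\mathbb{Q}$-factoriality of $D_Y(s)$, and crepant resolutions of threefold canonical singularities are frequently small, so the dichotomy divisorial versus small is precisely what must be proved; the sentence ``this already proves that $\theta'$ is a divisorial contraction'', and the closing claim that this purity argument ``bypasses'' the fiber analysis, are therefore unjustified.

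Your third paragraph, however, contains a correct argument and should be promoted to the main proof: the fiber of $\theta'$ over $x$ is the Kempf fiber $p_W^{-1}(s(x))$, which is a single point for $x\notin C$ and positive-dimensional for $x\in C$ (if it were finite, Zariski's Main Theorem together with the normality of $Y$ would make $p_W$ a local isomorphism near $s(x)$, contradicting $s(x)\in\Sing Y$ and the smoothness of $\cW$); hence $\mathrm{Exc}(\theta')=\theta'^{-1}(C)$ has dimension at least $\dim C+1=2$, and being a proper closed subset of the threefold $\zero(\tilde{s})$ it is a contracted divisor. This is a coarser version of what the paper does: there, for each of the orbits (1), (5), (6), the fiber of the collapsing over a point of $\Sing\overline{\cO}$ met by $s$ is identified explicitly as a $\PP^1$ (all of $G/P=\PP^1$ in case (1), $\PP(\im(y))$ in case (5), the pencil of planes containing the line in case (6)), so that $\theta'^{-1}(C)$ is exhibited as a $\PP^1$-bundle over $C$. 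That sharper description also yields purity and the structure of the exceptional divisor, which the paper uses immediately afterwards to recognize $\zero(\tilde{s})$ as the blow-up of $D_Y(s)$ along $C$; your concluding discussion of Du Val transverse slices is not needed for the bare statement, as you note, but the explicit fiber identification is the part worth keeping rather than bypassing.
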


\begin{proof}
	Let us study $\theta'^{-1}(C)$, where $C:=\Sing(D_Y(s))=D_{\Sing (Y)}(s)$ (see Proposition \ref{propressing}). Let $Y':=\Sing Y$. We analyze the situation case by case.
	
	Orbit (1). $Y'$ is the $0$-orbit, and $C=\zero(s)$. If $x\in C$,  the whole fiber over $x$ of the morphism $\theta: \PP(E) \to X$ is contained in $\zero(\tilde{s})$. Therefore $\theta'^{-1}(C)$ is a $\PP^1$-bundle over $C$, and $\theta'$ is divisorial.
	
	Orbit (5). $Y$ is the closure of the orbit of nilpotent matrices in $\mathfrak{so}_5$ 
	of rank $2$, while $Y'$ is the closure of the orbit of matrices of rank $2$ whose image $P$ is isotropic. Consider the resolution $p_W:\Omega^1_{\QQ^3}\to Y$. 
	Over $Y \setminus Y'$ it is an isomorphism whose inverse is given by
	\[
	Y\setminus Y' \to \Omega^1_{\QQ^3}\mbox{ , }y\mapsto (l, \phi)
	\]
	where $l\in \im(y)$ is isotropic, and $\phi\in \Hom(l^{\perp}/l, l)$.
	Moreover, $p_W^{-1}(Y')$ 
	is a $\PP^1$-bundle over $Y'$: indeed, the fiber over a point $y\in Y'$ is isomorphic to the locus of isotropic lines in $\im(y)$, which is $\PP(\im(y))\cong \PP^1$  since $\im(y)$ is isotropic. Therefore, in the relative case one gets that $\theta'^{-1}(C)$ is a $\PP^1$-bundle over $C$, and again $\theta'$ is divisorial.
	
	Orbit (6). $Y'$ is the closure of the orbit of nilpotent matrices of rank $1$, whose desingularization is given by the total space of the cotangent bundle of $\PP^2$ and induces a desingularization $\zero_1(\tilde{s}) \rightarrow C$. But since $C$ is one-dimensional, it is smooth and $C \cong \zero_1(\tilde{s}) \subset \PP(E)$. 
	The morphism $\theta: F_{1,2}(E) \to X$ 
	factors through  $\theta_1: \PP(E) \to X$, 
	i.e. $\theta=\theta_1\circ p$, 
	where $p: F_{1,2}(E)\to \PP(E)$ is the natural projection. 
	With this notation, $\theta'^{-1}(C)=p^{-1}(\zero_1(\tilde{s}))$. 
	If $(x,l)\in \zero_1(\tilde{s})$, its preimage under $p$ is given by $\{(x,l,P)\in F_{1,2}(E), \; l\subset P \}$. 
	This implies again that $\theta'^{-1}(C)$ is a $\PP^1$-bundle over $C$, and $\theta'$ is a divisorial contraction.
\end{proof}

Since for the orbits (1) and (6) $D_Y(s)$ is a (singular) complete intersection, its Picard number is the same as the ambient space. When it is equal to $1$ (in all cases except for $X=\PP^2\times \PP^2$), $\zero(\tilde{s})$ is the blow-up of $D_Y(s)$ along the curve $C$ (see for example \cite[Proposition 8.11]{JPR06}).

\subsection{Fano fourfolds}
Finally, in Table \ref{tab:4foldsFano}, we collect a few examples of Fano fourfolds $F$ that can be constructed as orbital degeneracy loci. It is interesting to notice that their invariants do not appear in the classification given in \cite{Kuechle95} for zero loci of sections of homogeneous vector bundles, meaning that the varieties we found are not included in that list. As before, we restricted ourselves to the smooth case. In the case of nilpotent orbits, i.e.\ cases (3) and (7) of Table \ref{table:richardson}, the variety $X$ is forced to be $\QQ^{13}$ and $\PP^{20}$ respectively.

\begin{table}[h!bt]
	\begin{center}
		\begin{small}
			\caption{Some Fano degeneracy loci $F$ of dimension $4$}
			\label{tab:4foldsFano}
			\begin{tabular}{ccccccc} \toprule
				& $X$ & $E$ & $(-K_F)^4$ & $\chi(\Omega^1_F)$ & $\chi(\Omega^2_F)$ & $\hhh^0(-K_F)$ 
				\\ \midrule
				$\overline{Y_2}\subset \wedge^3\mathbb{C}^6$ & $\Gr(3,6)$ & ${\cal U}_X^* \oplus 3{\cal O}_X$ & $63$ & $-2$ & $21$ & $19$ 
				\\	\midrule
				$\overline{Y_2}\subset \wedge^3\mathbb{C}^6$ & $\IGr(2,7)$ & ${\cal Q}_X\oplus {\cal O}_X$ & $69$ & $-4$ & $26$ & $20$ 
				\\	\midrule
				$\overline{Y_2}\subset \wedge^3\mathbb{C}^6$ & $\IGr(2,7)$ & ${\cal U}_X^* \oplus 4{\cal O}_X$ & $47$ & $-7$ & $54$ & $16$ 
				\\	\midrule
				(3) $\PP^3, SL_4$ & $\QQ^{13}$ & $4{\cal O}_X$ & $40$ & $-18$ & $114$ & $15$
				\\	\midrule
				(7) $\PP^4, SL_5$ & $\PP^{20}$ & $5{\cal O}_X$ & $70$ & $-6$ & $46$ & $21$
				\\ \bottomrule
			\end{tabular}
		\end{small}
	\end{center}
\end{table}

\appendix
\section{Computation of Hodge numbers}
\label{appendix}
This appendix is devoted to explaining how we computed the Hodge numbers of some of the varieties we found as degeneracy loci. In particular, we deal with the case of smooth $\overline{Y_2}$-degeneracy loci studied in Section \ref{partDecForms}. We use standard techniques, such as the Koszul complex and the Leray spectral sequence, to reduce to the computation of cohomologies on the base variety $X$.

As our varieties are smooth, they are isomorphic to their resolutions $\zero(\tilde{s})\subset \PP(E)$. This is just the zero locus of a section of the bundle $Q_W$; hence, the Koszul complex
\[
0\to \wedge^{10}(Q_W^*)\to \dotso \to \wedge^{1}(Q_W^*)\to {\cal O}_{\PP(E)}\to {\cal O}_{\zero(\tilde{s})}\to 0
\]
gives a resolution of ${\cal O}_{\zero(\tilde{s})}$, so it can be used to compute the cohomology of the restriction to $\zero(\tilde{s})$ of a vector bundle on $\PP(E)$. What we need, for example for threefolds, is the cohomology of ${\cal O}_{\zero(\tilde{s})}$ and of $\Omega^1_{\zero(\tilde{s})}$. This last bundle is not the restriction of a bundle on $\PP(E)$, but its cohomology can be recovered by using the (co)normal sequence:
\[
0\to (Q_W^*)|_{\zero(\tilde{s})} \to (\Omega^1_{\PP(E)})|_{\zero(\tilde{s})} \to \Omega^1_{\zero(\tilde{s})}\to 0 \,\,\, .
\]
Therefore, we want to compute
\begin{equation}
	\label{whatwewant}
	\HHH^j(\PP(E), \wedge^i(Q_W^*)\otimes {\cal G}) \qquad \mbox{ for }{\cal G}={\cal O}_{\PP(E)}, Q_W^*,\Omega^1_{\PP(E)}.
\end{equation}
With some chance, this will be enough to determine the desired cohomology groups. To work directly on $X$, we can make use of Leray spectral sequence (see e.g.\ \cite{Voisin02}):

\begin{theorem}[Leray]
	Let $\phi:Z\to X$ be a continuous map between two topological spaces. For every sheaf ${\cal F}$ over $Z$, there exists a canonical filtration on $\HHH^q(Z, {\cal F})$ which is the limit object of a spectral sequence 
	\[
	E^{p,q}_{r} \Rightarrow \HHH^{p+q}(Z,{\cal F})\,\,\,.
	\] 
	The spectral sequence is canonically starting from $E_2$, whose terms are
	\[
	E^{p,q}_2 = \HHH^{p}(X,\RRR^q\phi_*{\cal F}).
	\]
\end{theorem}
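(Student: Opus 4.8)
The plan is to obtain this statement as the special case of Grothendieck's spectral sequence for a composition of functors, so that the work is entirely formal. The starting point is the factorization
\[
\Gamma(Z,-)=\Gamma(X,-)\circ\phi_*
\]
of functors on the category of sheaves of abelian groups over $Z$. This is immediate from the definition of the direct image: for any open $U\subseteq X$ one has $(\phi_*\cF)(U)=\cF(\phi^{-1}(U))$, and taking $U=X$ gives $\Gamma(Z,\cF)$; note that no hypothesis on $\phi$ beyond continuity is needed.

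The one point to verify before invoking Grothendieck's theorem is that $\phi_*$ carries injective sheaves on $Z$ to sheaves on $X$ that are acyclic for $\Gamma(X,-)$. I would check this along the usual chain: an injective sheaf of abelian groups is flasque; the direct image of a flasque sheaf is flasque, since under the identification above its restriction maps are restriction maps of the original sheaf, hence surjective; and a flasque sheaf has vanishing higher cohomology. Granting this, Grothendieck's theorem produces a first-quadrant spectral sequence with $E_2$-page
\[
E_2^{p,q}=\HHH^p\bigl(X,\RRR^q\phi_*\cF\bigr)
\]
converging to $\HHH^{p+q}(Z,\cF)$, the abutment being endowed with the canonical decreasing filtration that is the content of the statement.

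For a self-contained treatment I would recall the construction of Grothendieck's spectral sequence itself: choose an injective resolution $\cF\to I^\bullet$ on $Z$, apply $\phi_*$, choose a Cartan--Eilenberg resolution $\phi_*I^\bullet\to J^{\bullet,\bullet}$ by injective sheaves on $X$, and take the two spectral sequences of the first-quadrant double complex $\Gamma(X,J^{\bullet,\bullet})$. The acyclicity established above forces one of them to degenerate, identifying the cohomology of the total complex with $\HHH^{p+q}(Z,\cF)$; the other has $E_2$-page $\HHH^p(X,\RRR^q\phi_*\cF)$ by the very definition of the higher direct images, which completes the argument.

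There is no genuine obstacle here: the theorem is classical, and the only non-formal inputs are the preservation of flasqueness and $\Gamma(X,-)$-acyclicity under $\phi_*$, both routine, together with the boundedness needed for convergence. In every application in this paper $X$ and $Z$ are noetherian of finite dimension, so the spectral sequence is first-quadrant and bounded and no convergence subtlety arises. Accordingly I would keep the proof to the factorization and the acyclicity check, citing a standard homological algebra reference (for instance Godement, or \cite{Voisin02} as done here) for the general formalism.
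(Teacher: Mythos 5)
Your argument is correct, and it is the standard derivation: factor $\Gamma(Z,-)=\Gamma(X,-)\circ\phi_*$, check that $\phi_*$ sends injectives to $\Gamma(X,-)$-acyclic sheaves via the chain injective $\Rightarrow$ flasque, $\phi_*(\text{flasque})$ flasque, flasque $\Rightarrow$ acyclic, and invoke the Grothendieck composite-functor spectral sequence (equivalently, the two spectral sequences of the Cartan--Eilenberg double complex). Be aware, though, that the paper does not prove this statement at all: it is quoted as classical background in Appendix A, with a pointer to \cite{Voisin02}, and is then only \emph{used} (via the degeneration criterion \eqref{spectraldeg}) to compute cohomology on $\PP(E)$ from push-forwards to $X$. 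So your proposal supplies a self-contained proof where the paper simply cites one; both are legitimate, and yours adds nothing controversial since every step is routine homological algebra valid for arbitrary continuous maps and sheaves of abelian groups. One small correction: your remark that convergence needs $X$ and $Z$ noetherian of finite dimension is unnecessary --- the spectral sequence is concentrated in the first quadrant for purely formal reasons ($\RRR^q\phi_*=0$ and $\HHH^p=0$ for negative indices), hence bounded in each total degree, so convergence to $\HHH^{p+q}(Z,\cF)$ with its canonical filtration holds with no finiteness hypotheses whatsoever.
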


Applying the theorem to $\theta: \PP(E)\to X$, we are led to find the cohomology groups $\HHH^p(X, \RRR^q\theta_*(\wedge^i(Q_W^*)\otimes {\cal G}))$. This is not hard, as shown below. It should be noted that it is not clear a priori if the spectral sequence degenerates at $E_2^{p,q}$. However, by the definition of $E_r^{p,q}$,

\[
E_r^{p,q}\to E_r^{p+r,q-r+1} \mbox{ is zero } \quad \Rightarrow \quad E_{r+1}^{p,q}=E_r^{p,q}.
\]
Therefore, if
\begin{equation}
	\label{spectraldeg}
	E_2^{p,q}\to E_2^{p+r,q-r+1} \mbox{ is zero }\forall r\geq 2 \,\, ,
\end{equation}
then $E_{\infty}^{p,q}=E_2^{p,q}$.

As for ${\cal G}=\Omega^1_{\PP(E)}$, it is convenient to work with $\theta^*\Omega^1_{X}$ and $\Omega^1_{\PP(E)/X}$ instead and consider the exact sequence
\[
\xymatrix{
	0 \ar[r] & \theta^*\Omega^1_{X} \ar[r] & \Omega^1_{\PP(E)} \ar[r] & \Omega^1_{\PP(E)/X} \ar[r] & 0}
\]
where the first map is the dual of $d\theta$. Indeed, by the projection formula for the push-forward, 
\[
\RRR^q\theta_*(\wedge^i(Q_W^*)\otimes \theta^* (\Omega^1_{X}))=\RRR^q\theta_*(\wedge^i(Q_W^*))\otimes \Omega^1_{X} \,\,\, .
\]

Moreover, the relative cotangent bundle of a projective bundle is well understood, as $\Omega^1_{\PP(E)/X}\cong {\cal U}\otimes \cQ^*$.

Let 
$\tilde{\cal G}$ stand for 
$\wedge^i(Q_W^*)\otimes {\cal G}$. We want to apply 
$\RRR^q\theta_*(\cdot)$ to it. In all the cases needed, the bundle $\tilde{\cal G}$ is the relative version of a homogeneous bundle over $\PP(V_6)$, say 
$\tilde{G}$, i.e.\ 
$\tilde{\cal G}\cong \cE_{\tilde{G}}$. Moreover,  we can compute the stalk of $\tilde{\cal G}$ on every point $x\in X$ by the formula 
\[
\RRR^q\theta_*(\tilde{\cal G})_x=\HHH^q(\theta^{-1}(x), \tilde{\cal G}|_{\theta^{-1}(x)})\cong \HHH^q(\PP(V_6),  \tilde{G})\,\,\, .
\]
This is given by Bott's Theorem \cite{Bott57} as a Schur functor applied to $V_6^*$, say 
$\HHH^q(\PP(V_6),  \tilde{G})\cong S_{\lambda_1,\dotsc,\lambda_6}V_6^*$. In the relative case, we get:
\begin{equation*}
	\RRR^q\theta_*(\tilde{\cal G})=\RRR^q\theta_*(\cE_{\tilde{G}})=\cE_{\HHH^q(\PP(V_6),  \tilde{G})}\cong S_{\lambda_1,\dotsc,\lambda_6}E^*.
\end{equation*}
As an example, if $\tilde{\cal G}={\cal U}^*$, $\RRR^0\theta_*(\tilde{\cal G})=S_{1,0,0,0,0,0}E^*=E^*$ and the other push-forwards vanish. 

In the end, we obtain the cohomologies on $\PP(E)$ in terms of the cohomologies of certain $S_{\lambda_1,\dotsc,\lambda_6}E^*$ on $X$. For any fixed pair $(i,q)$, the Schur functor $S_\lambda$ associated to $\RRR^q\theta_*(\wedge^i(Q_W^*)\otimes {\cal G})$ does not depend on $X$ or $E$; we collect in Tables \ref{plethysms1} and \ref{plethysms2} the corresponding $\lambda$ for each choice of $\cal G$.

Finally, Bott's Theorem yields $\HHH^p(X,S_\lambda E^*)$. Notice that $S_\lambda E^*$ is not irreducible in general, so some plethysm is needed; we used the computer algebra software \texttt{LiE} (\cite{Lie}) to obtain a decomposition in irreducible homogeneous bundles. As it turns out, in all our cases condition \eqref{spectraldeg} is satisfied, i.e.\ the Leray spectral sequence degenerates at $r=2$. Therefore, these computations are enough to recover the cohomology groups \eqref{whatwewant} of the terms of the Koszul complexes on $\PP(E)$.

{\small
	\begin{table}[h!bt]
		\begin{center}
			\caption{Partitions associated to the push-forward of the bundles on $\PP(E)$; $(\lambda_1,\dotsc,\lambda_6)$ corresponds to $S_{\lambda_1,\dotsc,\lambda_6}E^*$.}
			\label{plethysms1}
			\begin{tabular}{cccc} \toprule
				$i$ & $q$ & $\RRR^q\theta_*(\wedge^iQ_W^*)$ & $\RRR^q\theta_*(\wedge^iQ_W^*\otimes Q_W^*)$ 
				\\ \midrule
				$0$ & $0$ & $(0,0,0,0,0,0)$ & 
				\\ \midrule
				$1$ & $1$ &  & $(1,1,1,1,1,1)$ 
				\\ \midrule
				$2$ & $2$ &  & $(2,2,2,2,1,0)+(2,2,2,1,1,1)$ 
				\\ \midrule
				$3$ & $2$ & $(2,2,2,1,1,1)$ & $(2,2,2,2,2,2)+(3,3,2,2,1,1)+2\times (3,2,2,2,2,1) $ 
				\\ \midrule
				$4$ & $2$ & $(3,2,2,2,2,1)$ & $(4,3,2,2,2,2)$ 
				\\ \midrule
				$4$ & $3$ &  & $(3,3,3,3,3,0)$ 
				\\ \midrule
				$5$ & $3$ &  & $(4,4,3,3,3,1)+(5,3,3,3,2,2)+2\times (4,3,3,3,3,2)$ 
				\\ \midrule
				$6$ & $3$ & $(4,3,3,3,3,2)$ &$\begin{array}{l}
				(4,4,4,3,3,3)+(5,4,4,3,3,2)+(5,4,3,3,3,3) \\
				{}+(6,3,3,3,3,3)
				\end{array}$
				\\ \midrule
				$7$ & $3$ & $(4,4,4,3,3,3)$ & $(5,5,5,3,3,3)$ 
				\\ \midrule
				$7$ & $4$ &  & $(5,4,4,4,4,3)$
				\\ \midrule
				$8$ & $4$ &  & $(5,5,5,4,4,4)+(6,5,4,4,4,4)$ 
				\\ \midrule
				$9$ & $5$ &  & $(5,5,5,5,5,5)$
				\\ \midrule
				$10$ & $5$ & $(5,5,5,5,5,5)$ & $(6,6,6,5,5,5)$
				\\ \bottomrule
			\end{tabular}
		\end{center}
	\end{table}
}

{\small
	\begin{table}[h!bt]
		\begin{center}
			\caption{Partitions associated to the push-forward of the bundles on $\PP(E)$; $(\lambda_1,\dotsc,\lambda_6)$ corresponds to $S_{\lambda_1,\dotsc,\lambda_6}E^*$.}
			\label{plethysms2}
			\begin{tabular}{ccc} \toprule
				$i$ & $q$ & $\RRR^q\theta_*(\wedge^iQ_W^*\otimes ({\cal U_{\PP(E)}}\otimes \cQ_{\PP(E)}^*))$
				\\ \midrule
				$0$ & $1$ & $(0,0,0,0,0,0)$
				\\ \midrule
				$2$ & $2$ & $(1,1,1,1,1,1)+(2,1,1,1,1,0)$
				\\ \midrule
				$3$ & $2$ & $(3,2,1,1,1,1)$
				\\ \midrule
				$4$ & $3$ & $(3,2,2,2,2,1)$
				\\ \midrule
				$5$ & $3$ & $(4,3,2,2,2,2)+(5,2,2,2,2,2)$
				\\ \midrule
				$6$ & $4$ & $(4,3,3,3,3,2)$
				\\ \midrule
				$7$ & $4$ & $(4,4,4,3,3,3)+(5,4,3,3,3,3)+(6,3,3,3,3,3)$
				\\ \midrule
				$9$ & $5$ & $(5,5,5,4,4,4)+(6,5,4,4,4,4)$
				\\ \midrule
				$10$ & $5$ & $(6,5,5,5,5,4)$
				\\ \bottomrule
			\end{tabular}
		\end{center}
	\end{table}
}

\section{A Thom--Porteous type formula}
\label{appendixB}

In this appendix we present, for $Y$ the subvariety of partially decomposable three-forms in $\wedge^3 \mathbb{C}^6$, a Thom--Porteous type formula for the fundamental class of an orbital degeneracy locus $D_Y(s)$ of a section $s \in \HHH^0(X,E)$ in terms of the Chern classes of $E$. A formula expressing the Todd class of a four-dimensional $D_Y(s)$ in terms of the Chern classes of $E$ and of the tangent bundle of $X$ is also given.

\begin{proposition}
	Let $s$ be a general section of the globally generated vector bundle $\wedge^3 E$ on a variety $X$ of arbitrary dimension. Let $e_i$ denote the Chern classes of $E$ and $s_\lambda$ its Schur classes. Then the fundamental class of $D_Y(s)$ is
	\begin{equation*}
		[D_Y(s)] =e_1 \left(e_1^4+e_2^2+2e_1e_3-4e_4\right) = s_{(4)}+3s_{(3,1)}+3s_{(2,2)}+6s_{(2,1,1)}.
	\end{equation*}
	\begin{proof}
		The cohomology ring of $\PP(E)$ is an algebra over the cohomology ring of $X$ and it is generated by $H$ with the relation
		\[H^6=-\sum_{i=1}^6 e_iH^{6-i}.\]
		On $\PP(E)$, the class of $\zero(\tilde{s})$ is the class of a zero locus of a general section of $\wedge^3\cQ_{\PP(E)}$; the Chern classes of $\wedge^3\cQ_{\PP(E)}$ can be easily found in terms of the Chern classes of $\cQ_{\PP(E)}$, and a computer-aided computation yields the following expression for the top Chern class:
		\begin{equation}
			\label{ctopctop}
			\begin{array}{r@{}l}
				\ctop(\wedge^3\cQ_{\PP(E)}) = H^5e_1 & \left(e_1^4+e_2^2+2e_1e_3-4e_4\right) + {}\\
				\rule{0pt}{12pt} H^4e_1 & \left(e_1^5+e_1^3e_2+2e_1e_2^2+e_1^2e_3-e_2e_3-6e_1e_4+2e_5\right) + {}\\
				\rule{0pt}{12pt} H^3e_1 & \left(2e_1^4e_2+2e_1^2e_2^2+e_2^3-e_1^3e_3-e_1e_2e_3+{} \right. \\
				\rule{0pt}{12pt} & \multicolumn{1}{r}{\left. e_3^2-4e_1^2e_4-4e_2e_4+4e_1e_5-4e_6\right) + {}}\\
				\rule{0pt}{12pt} H^2e_1 & \left(2e_1^3e_2^2+e_1e_2^3+e_1^4e_3-3e_1^2e_2e_3+3e_1e_3^2-3e_1^3e_4+{}\right.\\
				\rule{0pt}{12pt} & \multicolumn{1}{r}{\left.-3e_1e_2e_4-2e_3e_4+3e_1^2e_5+e_2e_5-8e_1e_6\right) + {}}\\
				\rule{0pt}{12pt} He_1 & \left(e_1^2e_2^3 + e_1^3e_2e_3 - e_1e_2^2e_3 - e_1^2e_3^2-4e_1^2e_2e_4+e_2^2e_4+ {}\right.\\
				\rule{0pt}{12pt} & \multicolumn{1}{r}{\left.5e_1e_3e_4-4e_4^2+e_1^3e_5+e_3e_5-6e_1^2e_6-2e_2e_6\right)+{}}\\
				\rule{0pt}{12pt} e_6 & \left(-3e_1^4-e_2^2-4e_1e_3+4e_4\right)+{}\\
				\rule{0pt}{12pt} e_5 & \left(e_1^5-e_1^3e_2+3e_1^2e_3+e_2e_3-2e_1e_4-e_5\right)+{}\\
				\rule{0pt}{12pt} e_4 & \left(-e_1^4e_2+e_1^3e_3+e_1e_2e_3-e_3^2-e_1^2e_4\right)+{}\\
				\rule{0pt}{12pt} e_3 & \left( e_1^3e_2^2-2e_1^2e_2e_3+e_1e_3^2\right).
			\end{array}
		\end{equation}
		
		Let $\theta:\PP(E) \rightarrow X$ be the usual projection. The push-forward $\theta_*(H^i)$ is the zero class for $i < 5$, hence the class of $D_Y(s)$ is given by the coefficient of $H^5$ in \eqref{ctopctop}. An easy computation leads to the expression in terms of the Schur classes of $E$ (see e.g.\ \cite{Fulton98}). 
	\end{proof}
\end{proposition}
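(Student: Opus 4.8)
The plan is to realize $[D_Y(s)]$ as a pushforward, following the relative collapsing diagram \eqref{relversion}. Recall from the discussion preceding the statement (the case $\ell=2$, $k=1$, $d=6$, $r=1$ of Example \ref{krdecomposable}) that $\cW_1 = \cO(-1)\otimes\wedge^2\cQ$ is a genuine desingularization of $Y=\overline{Y_2}$, so the Kempf collapsing $p_W$ is birational; by Proposition \ref{propressing} the locus $D_Y(s) = \theta(\zero(\tilde s))$ then has pure codimension five, is generically reduced, and $\theta' : \zero(\tilde s)\to D_Y(s)$ is a resolution of singularities, in particular birational. Hence $\theta_*[\zero(\tilde s)] = [D_Y(s)]$ in the cohomology ring of $X$. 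Moreover $\zero(\tilde s)\subset\PP(E)$ is the zero locus of a general section of $Q_W\cong\wedge^3\cQ_{\PP(E)}$, which is globally generated (being a quotient of $\theta^*\wedge^3 E$); by the usual Bertini argument the section is transverse to the zero section, so $[\zero(\tilde s)] = \ctop(\wedge^3\cQ_{\PP(E)})$.

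It remains to compute this top Chern class and its image under $\theta_*$. Write $H = c_1(\cO_{\PP(E)}(1))$. The tautological sequence $0\to\cO_{\PP(E)}(-1)\to\theta^*E\to\cQ_{\PP(E)}\to 0$ gives $c(\cQ_{\PP(E)}) = \theta^*c(E)\cdot(1-H)^{-1}$, which determines the Chern roots of the rank-five bundle $\cQ_{\PP(E)}$, hence the elementary symmetric functions of the ten pairwise sums of those roots, i.e.\ all the Chern classes of $\wedge^3\cQ_{\PP(E)}$ and in particular $c_{10}$. The cohomology ring of $\PP(E)$ is free over that of $X$ on $1,H,\dots,H^5$, with $H^6 = -\sum_{i=1}^6 e_iH^{6-i}$; using this relation one rewrites $\ctop(\wedge^3\cQ_{\PP(E)})$ as a polynomial of degree at most five in $H$ with coefficients pulled back from $X$. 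Since $\theta_*(H^i)=0$ for $i<5$ and $\theta_*(H^5)=1$, the pushforward $[D_Y(s)]$ is exactly the coefficient of $H^5$ in that polynomial, which one computes to be $e_1(e_1^4+e_2^2+2e_1e_3-4e_4)$.

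Finally, the conversion to Schur classes is a purely combinatorial check: expanding $s_{(4)}$, $s_{(3,1)}$, $s_{(2,2)}$ and $s_{(2,1,1)}$ in the Chern roots of $E$ through the Jacobi--Trudi determinants and comparing with $e_1(e_1^4+e_2^2+2e_1e_3-4e_4)$ yields $[D_Y(s)] = s_{(4)}+3s_{(3,1)}+3s_{(2,2)}+6s_{(2,1,1)}$; this is a finite identity among symmetric functions of degree five.

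The main obstacle is the middle step: expanding $\ctop(\wedge^3\cQ_{\PP(E)})$ and reducing it modulo the relation defining the cohomology of $\PP(E)$ produces an unwieldy polynomial with many monomials in the $e_i$, and is most safely done with computer algebra. The conceptual points --- identifying $Q_W$ with $\wedge^3\cQ_{\PP(E)}$, and the birationality of $\theta'$ which guarantees that no multiplicity is lost in $\theta_*[\zero(\tilde s)] = [D_Y(s)]$ --- are light, but the birationality together with the generic reducedness of $D_Y(s)$ should be invoked explicitly, as they are what allow the naive pushforward to give the fundamental class on the nose.
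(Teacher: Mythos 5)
Your proposal is correct and follows essentially the same route as the paper: identify $\zero(\tilde s)$ with the zero locus of a general section of $Q_W\cong\wedge^3\cQ_{\PP(E)}$, compute $\ctop(\wedge^3\cQ_{\PP(E)})$ from the tautological sequence, reduce modulo the relation $H^6=-\sum_i e_iH^{6-i}$, read off the coefficient of $H^5$ as the pushforward, and convert to Schur classes. The only difference is that you make explicit the birationality of $\theta'$ (via Proposition \ref{propressing}) to justify $\theta_*[\zero(\tilde s)]=[D_Y(s)]$, a point the paper leaves implicit.
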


For any variety $Z$, the Hirzebruch--Riemann--Roch Theorem yields
\[
\chi(\cO_{Z})=
\int_{Z} \td(Z),
\]  
being $\td(Z)$ the Todd class of the tangent bundle to $Z$.
With a little more effort we are able to express the Todd class of $D_Y(s)$ in terms of the Chern classes of $E$ and of the tangent bundle of $X$. In the following formula we write an explicit expression for fourfolds.

\begin{formula}
	\label{formulab2}
	Let $D_Y(s)$ have dimension four. Let $e_i$ and $t_i$ denote the Chern classes of $E$ and of the tangent bundle of $X$ respectively. Then
	\begin{equation}
		\label{todddys}
		\begin{array}{r@{}l}
			\td(D_Y(s)) = e_1e_6 & \left( \tfrac{601}{180} e_1^2 - \tfrac{1}{12}e_2 - \tfrac{5}{4} e_1t_1 + \tfrac{1}{12} t_1^2 + \tfrac{1}{12} t_2\right) + {}\\
			\rule{0pt}{12pt}e_1e_5 & \left( -\tfrac{101}{180}e_1^3+\tfrac{11}{360} e_1e_2 -
			\tfrac{1}{40} e_3 + \tfrac{5}{24} e_1^2t_1 - \tfrac{1}{72} e_1t_1^2 - \tfrac{1}{72} e_1t_2 \right) + {} \\
			\rule{0pt}{12pt}e_1e_4 & \left(-\tfrac{311}{36} e_1^4 + \tfrac{787}{360} e_1^2e_2 - \tfrac{1}{18} e_2^2 - \tfrac{1}{72} e_1 e_3 + \tfrac{145}{24} e_1^3t_1 - {} \right.\\
			\rule{0pt}{12pt}& \multicolumn{1}{c}{\left. \tfrac{5}{6} e_1e_2t_1 - \tfrac{79}{72} e_1^2t_1^2 + \tfrac{1}{18} e_2t_1^2 +  \tfrac{1}{180} t_1^4 - \tfrac{79}{72} e_1^2t_2 + \tfrac{1}{18} e_2t_2 +  {} \right.}\\
			\rule{0pt}{12pt} & \multicolumn{1}{r}{\left. \tfrac{5}{12} e_1t_1t_2 - \tfrac{1}{45} t_1^2t_2 - \tfrac{1}{60} t_2^2 - \tfrac{1}{180} t_1t_3 + \tfrac{1}{180} t_4
				+ \tfrac{1}{45} e_4  \right) + {}} \\
			\rule{0pt}{12pt} e_1e_3 & \left( \tfrac{81}{20} e_1^5 - \tfrac{1}{60} e_1e_2^2 - \tfrac{35}{12} e_1^4t_1 + \tfrac{13}{24} e_1^3 t_1^2 - \tfrac{1}{360} e_1 t_1^4 + \tfrac{13}{24} e_1^3 t_2 - {} \right.\\
			\rule{0pt}{12pt} & \multicolumn{1}{c}{\left. \tfrac{5}{24} e_1^2t_1t_2 + \tfrac{1}{90} e_1t_1^2 t_2 + \tfrac{1}{120} e_1t_2^2 + \tfrac{1}{360} e_1 t_1 t_3 - \tfrac{1}{360} e_1 t_4 \right. - {}}\\
			\rule{0pt}{12pt} & \multicolumn{1}{r}{\left. \tfrac{97}{120} e_1^2e_3 + \tfrac{1}{30} e_2e_3 + \tfrac{5}{16} e_1e_3t_1 - \tfrac{1}{48} e_3t_1^2 - \tfrac{1}{48} e_3t_2 \right) + {}}\\
			\rule{0pt}{12pt} e_1e_2 & \left( \tfrac{81}{40} e_1^4e_2 - \tfrac{35}{24} e_1^3e_2t_1 + \tfrac{13}{48} e_1^2e_2t_1^2 - \tfrac{1}{720} e_2t_1^4 + \tfrac{13}{48} e_1^2e_2t_2 -  {}  \right. \\
			\rule{0pt}{12pt} & \multicolumn{1}{c}{\left. \tfrac{5}{48} e_1 e_2 t_1 t_2 + \tfrac{1}{180} e_2t_1^2t_2 + \tfrac{1}{240} e_2 t_2^2 + \tfrac{1}{720} e_2t_1t_3 - \tfrac{1}{720} e_2 t_4 - {} \right.}\\
			\rule{0pt}{12pt} & \multicolumn{1}{r}{\left. \tfrac{97}{180} e_1^2 e_2^2 + \tfrac{5}{24} e_1e_2^2t_1 - \tfrac{1}{72} e_2^2t_1^2 - \tfrac{1}{72} e_2^2t_2 + \tfrac{1}{80} e_2^3 \right) + {}}\\
			\rule{0pt}{12pt} e_1^5 & \left( -\tfrac{1}{720}t_1^4 + \tfrac{1}{180}t_1^2t_2 + \tfrac{1}{240} t_2^2 + \tfrac{1}{720} t_1t_3 - \tfrac{1}{720} t_4 - {} \right.\\
			\rule{0pt}{12pt} & \multicolumn{1}{r}{\left. \tfrac{5}{48}e_1t_1t_2 +\tfrac{5}{18} e_1^2t_1^2 + \tfrac{5}{18} e_1^2 t_2 - \tfrac{25}{16} e_1^3t_1 + \tfrac{331}{144} e_1^4 \right).}
		\end{array}
	\end{equation}
	\begin{proof}
		We can compute the Todd class of the resolution of singularities $\zero(\tilde{s})$, which is isomorphic to $D_Y(s)$ by hypothesis. Since
		\[
		\td(\zero(\tilde{s}))= \frac{\td(\PP(E))}{\td{(\wedge^3 \cQ_{\PP(E)})}} \ctop(\wedge^3\cQ_{\PP(E)}),
		\]
		we need to compute the Todd classes of the tangent bundle of $\PP(E)$ and of $\wedge^3\cQ_{\PP(E)}$, which can be expressed in terms of the corresponding Chern classes. The Chern polynomial of the tangent bundle of $\PP(E)$ can be found as the product of the Chern polynomials of the relative tangent bundle $\cQ_{\PP(E)}(1)$ and the tangent bundle of $X$.
	\end{proof}
\end{formula}

The formula above holds for a four-dimensional degeneracy locus $D_Y(s)$ inside a nine-dimensional variety $X$. In particular, for $X$ a Fano variety of index 5 with $K_X=(L^*)^5$ and $e_1:=c_1(E)=c_1(L)$, formula \eqref{todddys} with $t_1=5e_1$ yields an expression for the Todd class of a $D_Y(s)$ with trivial canonical bundle.

Suppose that $X$ is Fano of index $i$ with $K_X=(L^*)^i$, and suppose that $6 \leq i \leq 10$. Suppose that $e_1=c_1(L)$; then $D_Y(s)$ turns out to be a Fano variety, as discussed in Section \ref{fanoDegLoci}. In particular \eqref{todddys}, with the substitution $t_1=ie_1$, yields the constant value 1 by the Hirzebruch--Riemann--Roch Theorem. Is there a simple interpretation of Formula \ref{formulab2} which explains this phenomenon?

\medskip\noindent {\sc Problem}. Find a Thom--Porteous type formula for other $G$-invariant subvarieties $Y$ inside a $G$-representation $V$.


\makeatletter
\providecommand\@dotsep{5}
\makeatother
\listoftodos\relax

\end{document}